\documentclass[11pt]{amsart}


\usepackage{amsfonts,amssymb,amscd,amsmath,enumerate,verbatim,calc,thumbpdf,mathrsfs, graphicx, multicol, multirow, stmaryrd,hyperref}
\usepackage[all]{xy}

\setlength{\oddsidemargin}{0.25in}  
\setlength{\evensidemargin}{0.25in} 
\setlength{\marginparwidth}{0in} 
\setlength{\marginparsep}{0in} 
\setlength{\marginparpush}{0in} 
\setlength{\topmargin}{0in} 

\setlength{\footskip}{.3in} 
\setlength{\textheight}{8.75in} 
\setlength{\textwidth}{6in} 
\setlength{\parskip}{4pt} 

\theoremstyle{plain}
\newtheorem{Theorem}{Theorem}[section]
\newtheorem{Lemma}[Theorem]{Lemma}
\newtheorem{Corollary}[Theorem]{Corollary}
\newtheorem{Proposition}[Theorem]{Proposition}
\newtheorem{intthm}{Theorem}

\theoremstyle{definition}

\newtheorem{Notation}[Theorem]{Notation}
\newtheorem{Remark}[Theorem]{Remark}

\newtheorem{Fact}[Theorem]{Fact}
\newtheorem{Ex}[Theorem]{Example}
\newtheorem{Note}[Theorem]{Note}
\newtheorem{Definition}[Theorem]{Definition}
\newtheorem{Example}[Theorem]{Example}
\newtheorem*{convention*}{Convention}

\def\depth{\mbox{\rm {depth}}}

\newcommand{\Ht}{\operatorname{ht}}	
\newcommand{\del}{\delta}
\newcommand{\mrad}{\operatorname{m-rad}}
\newcommand{\rad}{\operatorname{rad}}

\newcommand{\mht}{\operatorname{m-ht}}

\usepackage[utf8]{inputenc}
\usepackage[T1]{fontenc}
\numberwithin{equation}{Theorem}

\begin{document}

\title[Edge Ideals of Weighted Graphs]{Edge Ideals of Weighted Graphs}
\author{Chelsey Paulsen \and Sean Sather-Wagstaff}

\address{Department of Mathematics 2750\\ North Dakota State University\\PO BOX 6050\\ Fargo, ND 58108-6050\\ USA}

\email{chelsey.paulsen@gmail.com}
\email{sean.sather-wagstaff@ndsu.edu}
\urladdr{http://www.ndsu.edu/pubweb/\~{}ssatherw/}

\keywords{Edge ideals, weighted graphs, unmixed, Cohen-Macaulay}
\subjclass[2010]{Primary 05C22, 05E40, 13F55, 13H10;
Secondary 05C05, 05C38, 05C69, 13C05}

\dedicatory{Dedicated to Warren Shreve on the eve of his retirement}

\begin{abstract}
We study weighted graphs and their ``edge ideals'' which are ideals in polynomial rings that are defined in terms of the graphs. We provide combinatorial descriptions of m-irreducible decompositions for the edge ideal of a weighted graph in terms of the combinatorics of ``weighted vertex covers''. We use these, for instance, to say when these ideals are m-unmixed. We explicitly describe which weighted cycles, suspensions, and trees are unmixed and which ones are Cohen-Macaulay, and we prove that all weighted complete graphs are Cohen-Macaulay.
\end{abstract}

\maketitle

\section*{Introduction}

\begin{convention*}
Throughout this paper, let $A$ be a  non-zero commutative ring,
and let $R$ denote a polynomial ring $R= A[X_1,\dots,X_d]$.
Let $G=(V, E)$ be a (finite simple) graph with
vertex set $V=\{v_1,\ldots,v_d\}$ and edge set $E$.
An edge between vertices $v_i$ and $v_j$ is denoted $v_iv_j$.
\end{convention*}

In this section, assume that $A$ is a field.

Algebra and combinatorics have a rich history of interaction. In short, one can study combinatorial objects (graphs, posets, simplicial complexes, etc.) through algebraic constructions. In the other direction, one can use these constructions to find interesting examples of ideals and rings, for instance, families of Cohen-Macaulay rings. 
This paper continues in this  tradition.

A relatively new (but well-studied) construction takes the graph $G$ and associates to it the ``edge ideal'' $I(G)$ in the polynomial ring $R$. Much work has been done to relate the combinatorial properties of $G$ to the algebraic properties of $I(G)$, and vice versa. For instance, one can explicitly describe the irreducible decomposition of $I(G)$ in terms of the combinatorial structure of $G$. In particular, one can easily describe when $I(G)$ is unmixed. On the other hand, the Cohen-Macaulay property for $R/I(G)$ is more subtle. Much work in the literature is devoted to providing classes of graphs $G$ such that $R/I(G)$ is Cohen-Macaulay (or not) for instance in \cite{Fr,VT,Vi1}.

In this paper, we introduce and study a version of this construction for weighted graphs;
see Sections~\ref{sec120507a} and~\ref{sec120507b} for background material
on weighted graphs and monomial ideals. We study the irreducible decompositions of these ideals via ``weighted vertex covers'' and characterize when these ideals are unmixed in Section~\ref{sec120508a}. We apply this, for instance, to the situation of weighted cycles (which are almost always mixed) and weighted complete graphs (which are always unmixed) in Section~\ref{sec120507c}. We conclude with Section~\ref{sec120507d} which describes some situations where these weighted graphs are Cohen-Macaulay. 
For instance, we completely characterize the Cohen-Macaulay weighted cycles.

\begin{intthm}\label{intthm120507a}
Consider a weighted $d$-cycle $C^d_{\omega}$.
\begin{enumerate}[\rm(a)]
\item \label{intthm120507a1} If $C^d_{\omega}$ is Cohen-Macaulay, then $d\in\{3,5\}$.
\item \label{intthm120507a2} $C^3_{\omega}$ is always Cohen-Macaulay.
\item \label{intthm120507a3} $C^5_{\omega}$ is Cohen-Macaulay if and only if 
it can be written in the form
$$\xymatrix{
v_1 \ar@{-}[d]_a \ar@{-}[r]^a &v_2\ar@{-}[r]^b &v_3 \ar@{-}[ld]^c\\
v_5\ar@{-}[r]_d & v_4
}$$
such that $a\leq b\geq c\leq d\geq e$
\end{enumerate}
\end{intthm}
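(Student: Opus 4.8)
The plan is to reduce the whole statement to the combinatorics of weighted vertex covers developed in Section~\ref{sec120508a}, together with the standard fact that a Cohen--Macaulay quotient is m-unmixed. Write $a,b,c,d,e$ for the edge weights read around the cycle. The key bookkeeping observation is that each m-irreducible component of $I(C^d_\omega)$ has height equal to the size of the support of the corresponding minimal weighted vertex cover, so m-unmixedness says exactly that all minimal weighted vertex covers have a common support size. With this in hand, part~\ref{intthm120507a2} is immediate: since $C^3$ is the complete graph $K_3$, the weighted triangle $C^3_\omega$ is a weighted complete graph, and these are all Cohen--Macaulay by the theorem of Section~\ref{sec120507c}.

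For part~\ref{intthm120507a1}, I would force $d\in\{3,5\}$ in two stages. When $d\notin\{3,4,5,7\}$ the underlying cycle is not well-covered, and assigning the value $1$ to each vertex of a minimal vertex cover of $C^d$ produces a minimal weighted vertex cover with that support; since such covers occur in two different sizes independently of $\omega$, the ideal $I(C^d_\omega)$ is m-mixed and hence not Cohen--Macaulay. This leaves $d\in\{4,7\}$, which must be excluded homologically rather than by dimension counting. For $d=4$ the only size-two supports are $\{v_1,v_3\}$ and $\{v_2,v_4\}$, so $R/I(C^4_\omega)$ has two top-dimensional components meeting only at the origin; it is therefore not connected in codimension one and fails Serre's condition $(S_2)$, for every $\omega$. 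For $d=7$ I would polarize and apply Reisner's criterion, checking that the polarization retains the nonvanishing first reduced homology of the independence complex of $C^7$; again the obstruction is insensitive to the weights.

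For the forward direction of part~\ref{intthm120507a3}, Cohen--Macaulay implies m-unmixed, and I would read off from the cover combinatorics of $C^5_\omega$ that m-unmixedness already forces the zigzag labeling $a\le b\ge c\le d\ge e$: whenever the weights admit no such reading, a minimal weighted vertex cover is driven to use a fourth vertex, yielding a component of height $4$ alongside the height-$3$ components and thus m-mixedness. The substantive direction is the converse. Under $a\le b\ge c\le d\ge e$ one checks, again from the cover description, that every minimal weighted vertex cover has support one of the five size-three minimal vertex covers of $C^5$, so $R/I(C^5_\omega)$ is m-unmixed of dimension $2$, and it then remains to prove $\depth R/I(C^5_\omega)=2$.

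The main obstacle is precisely this depth computation, since unmixedness alone does not suffice. My preferred route is polarization: the polarized ideal $I(C^5_\omega)^{\mathrm{pol}}$ is the Stanley--Reisner ideal of an explicit complex $\Delta$, and Cohen--Macaulayness is preserved under polarization, so it suffices to show $\Delta$ is Cohen--Macaulay. I would do this by producing a shelling or a vertex-decomposition of $\Delta$ whose facet order is governed by the zigzag inequalities, with the peaks $b,d$ and valleys $a,c,e$ dictating the order and the shellability of the unweighted $C^5$ complex serving as the base case. A technically lighter fallback, should the shelling bookkeeping become unwieldy, is to induct with the depth lemma applied to
$$0\to R/(I:f)\to R/I\to R/(I,f)\to 0$$
for $f$ a suitably chosen power of a valley variable, arranged so that both outer quotients are visibly Cohen--Macaulay of the correct depth. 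Either way, the crux is to show that the alternating weight pattern is exactly what makes the relevant homology vanish.
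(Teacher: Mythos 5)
Your overall architecture agrees with the paper's: reduce Cohen--Macaulayness to m-unmixedness of $I(C^d_\omega)$, use Fact~\ref{Thrm cycles mixedness} and Proposition~\ref{Prop mixed ei then mixed wei} to cut down to $d\in\{3,4,5,7\}$, kill $d=4,7$ by a homological argument, and obtain part~(c) by combining the unmixedness characterization (Theorem~\ref{Prop weighted 5-cycles mixedness}) with ``unmixed $\Rightarrow$ CM'' for weighted $5$-cycles (Theorem~\ref{Prop w5-c cm iff um}). Your treatment of part~(b) via weighted complete graphs is a legitimate variant of Proposition~\ref{Prop complete graphs CM}. Your $d=4$ argument is actually complete and cleaner than the paper's: the two minimal primes $(X_1,X_3)$ and $(X_2,X_4)$ meet only at the irrelevant ideal, so $R/I(C^4_\omega)$ fails connectedness in codimension one, whereas Proposition~\ref{Prop tw4-c not cm} computes a regular element and an embedded maximal ideal by hand.

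The gaps are concentrated exactly where the real work lies. First, for $d=7$ the cycle $C^7$ is unmixed, so no dimension or connectedness argument is available, and your one-line claim that the polarized complex ``retains the nonvanishing first reduced homology of the independence complex of $C^7$'' is precisely the nontrivial assertion; it is stated, not proved. (It can be made rigorous, e.g.\ because the trivially weighted ideal is a bracket power with the same lcm-lattice, hence the same projective dimension, as $I(C^7)$, but some such argument must be supplied; the paper instead runs a Mayer--Vietoris sequence and a type-$1$ computation for weighted paths, Examples~\ref{ex120512a}--\ref{ex120513a} and Proposition~\ref{Prop tw7-c not cm}, to force $\operatorname{Ext}^2_R(R/\mathfrak m,R/I)\neq 0$.) Second, for the converse of part~(c) you correctly identify the depth bound as the crux but do not resolve it: no shelling or vertex decomposition is produced, and the colon sequence with a ``suitably chosen'' power of a valley variable is not carried out. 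The paper's proof of Theorem~\ref{Prop w5-c cm iff um} writes $I(C^5_\omega)=J\cap K$ where $J$ and $K$ are essentially weighted-path ideals, and it is exactly the zigzag inequalities $a\le b\ge c\le d\ge e$ that make those paths Cohen--Macaulay via Corollary~\ref{Cor path results}; without this (or an equivalent) the argument is incomplete. Third, the forward direction of~(c) --- that mixed weight patterns always produce a cardinality-$4$ minimal weighted vertex cover --- is the six-step case analysis of Theorem~\ref{Prop weighted 5-cycles mixedness}; ``driven to use a fourth vertex'' is the right intuition, but each configuration of weights requires an explicit cover and a minimality check.
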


This result is proved at the  end of Section~\ref{sec120507d}.
In Theorem~\ref{Thrm conditions for trees iff CM}
we also completely characterize the Cohen-Macaulay weighted trees.
This result contains the following:

\begin{intthm}\label{intthm120507b}
If the weighted tree $T_{\omega}$ is Cohen-Macaulay, then 
the underlying tree $T$ is a suspension of a tree, hence 
$T$ is Cohen-Macaulay. Conversely, if $T$ is a Cohen-Macaulay tree, then
there is a weight function $\omega$ such that $T_{\omega}$ is Cohen-Macaulay.
\end{intthm}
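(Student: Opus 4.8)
The plan is to treat the two implications separately: the forward implication carries the content, while the converse is essentially formal. For the forward implication, suppose $R/I(T_{\omega})$ is Cohen-Macaulay, and reduce to the underlying unweighted tree by a radical computation. For each edge $v_iv_j$ the corresponding weighted generator has square-free part $X_iX_j$, and since the radical of a monomial ideal is generated by the square-free parts of its generators, $\sqrt{I(T_{\omega})}=I(T)$. Thus $I(T_{\omega})$ and $I(T)$ have the same minimal primes, namely the ideals $(X_i : v_i\in C)$ as $C$ ranges over the minimal vertex covers of $T$, and each such prime has height $|C|$. Since $R/I(T_{\omega})$ is a Cohen-Macaulay quotient of a polynomial ring over a field, it is equidimensional, so all of these minimal primes share a common height; hence all minimal vertex covers of $T$ have the same cardinality, i.e. $T$ is unmixed.

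I would then invoke the classical combinatorial characterization of unmixed trees. An unmixed tree is obtained from a smaller tree by attaching one pendant edge at each vertex---that is, it is a suspension---and conversely attaching a whisker at every vertex of an arbitrary graph always yields a Cohen-Macaulay edge ideal \cite{Vi1}. Applied to $T$, this shows simultaneously that $T$ is a suspension of a tree and that $R/I(T)$ is Cohen-Macaulay, which is precisely the forward assertion.

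For the converse, suppose the tree $T$ is Cohen-Macaulay and let $\omega$ be the constant weight function equal to $1$ on every edge. By the definition of the weighted edge ideal, $I(T_{\omega})$ is then the ordinary edge ideal $I(T)$, so $R/I(T_{\omega})=R/I(T)$ is Cohen-Macaulay and $\omega$ is a weight function of the required kind.

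The crux is the reduction in the first paragraph: the weighted hypothesis is used only to force unmixedness of the underlying tree, after which all of the genuine combinatorial content is supplied by the classical structure theorem for trees. If one prefers to argue within the framework of weighted vertex covers developed earlier, the same reduction can be carried out by checking that each minimal vertex cover of $T$ underlies a minimal weighted vertex cover of $T_{\omega}$ of equal cardinality, so that m-unmixedness of $T_{\omega}$ descends to unmixedness of $T$; the radical computation above is a cleaner route to that conclusion.
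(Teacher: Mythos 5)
Your argument is correct, but it reaches the statement by a somewhat different route than the paper, which derives Theorem~\ref{intthm120507b} as a corollary of the stronger Theorem~\ref{Thrm conditions for trees iff CM}. In the forward direction you pass from Cohen--Macaulayness to unmixedness of the underlying tree via the radical computation $\rad(I(T_\omega))=I(T)$ (this is Lemma~\ref{Prop m-rad of specific ideals} together with Proposition~\ref{Prop primes and vertex covers}) and equidimensionality of graded Cohen--Macaulay quotients; the paper instead stays inside its combinatorial framework, deducing that $T_\omega$ is unmixed and then that $T$ is unmixed via Proposition~\ref{Prop mixed ei then mixed wei} (every minimal vertex cover of $T$ lifts to a minimal weighted vertex cover of $T_\omega$ of the same cardinality). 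Both reductions are valid and both then invoke the same classical structure theorem for unmixed trees. In the converse direction your choice of the constant weight $\omega\equiv 1$, for which $I(T_\omega)=I(T)$ on the nose, is a perfectly correct and more elementary existence proof; the paper instead proves the sharper statement that $T_\omega$ is Cohen--Macaulay precisely when the whisker weights dominate the interior weights (Theorem~\ref{thm120607}, via the polarization Lemma~\ref{Thrm conditions for R/I CM}), of which existence is a special case. So your proof buys brevity and avoids the weighted vertex cover machinery, while the paper's buys a complete characterization of the admissible weight functions. One small point to note: a tree with at most two vertices (in particular a single vertex) is Cohen--Macaulay but is not literally a suspension, so the classical characterization you cite requires the caveat $|V(T)|>2$; the paper's Theorem~\ref{Thrm conditions for trees iff CM} carries this case explicitly, and you should too.
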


Recall that every suspension of a tree is Cohen-Macaulay.
The same is not true for every weighted tree $T_{\omega}$ whose underlying graph
is a suspension of a tree: if $T$ is a suspension of a tree, then the weights on the
``whiskers'' determine whether $T_{\omega}$ is Cohen-Macaulay. 
This is a consequence of Theorem~\ref{thm120607} which characterizes
the Cohen-Macaulay weighted suspension.

As one may expect, we computed a number of examples using Macaulay~2~\cite{M2}
in the process of proving our results, though none of our proofs
depends on these computations.

\section{Weighted Graphs and Weighted Vertex Covers}
\label{sec120507a}

In this section, we introduce weighted vertex covers for weighted graphs and 
describe some of their basic properties for use in subsequent sections.
Recall that $G$ is a graph with vertex set $V= \{v_1, \dots, v_d\}$.

\begin{Definition} \label{Def vertex cover}
A \textbf{vertex cover} of $G$ is a subset $V' \subseteq V$ such that for each edge $v_iv_j$ in $G$ either $v_i \in V'$ or $v_j \in V'$. A vertex cover is \textbf{minimal} if it does not properly contain another vertex cover of $G$.
\end{Definition}

\begin{Definition}\label{Def weight}
A \textbf{weight function} on a graph $G$ is a function $\omega \colon E \rightarrow \mathbb{N}$ that assigns a \textbf{weight} to each edge.\footnote{We assume that
$\mathbb{N}=\{1,2,3,\ldots\}$.}
A \textbf{weighted graph} $G_{\omega}$ is a graph $G$ equipped with a weight function $\omega$.
A weighted graph $G_{\omega}$ where each edge has the same weight is a \textbf{trivially weighted graph}.
\end{Definition}

\begin{Note}
We represent weighted graphs graphically, as in the statement of 
Theorem~\ref{intthm120507a} in the introduction, by labeling
each edge with its weight.
\end{Note}

\begin{Definition} \label{Def weighted vertex cover}
Let $G_{\omega}$ be a weighted graph. A \textbf{weighted vertex cover} of $G_{\omega}$ is an ordered pair $(V', \delta ')$ such that $V'$ is a vertex cover of $G$ and $\delta '\colon V' \rightarrow \mathbb{N}$ is a function such that for each edge $e=v_iv_j\in E$
we have
\begin{enumerate}[(1)]
\item\label{item120507a} $v_i \in V'$ and $\delta'(v_i) \leq \omega (e)$, or
\item\label{item120507b} $v_j \in V'$ and $\delta'(v_j) \leq \omega (e)$.
\end{enumerate}
The number  $\delta'(v_i)$  is the \textbf{weight} of $v_i$. 
When condition~\eqref{item120507a} is satisfied, we write that
the vertex $v_i$ \textbf{covers} the edge $e$, and similarly for 
condition~\eqref{item120507b}.
\end{Definition}

\begin{Notation}
Given a weighted vertex cover $(V', \delta ')$ of a weighted graph $G_{\omega}$,
we sometimes write $V'=\{v_{i_1}^{\delta'(v_{i_1})},\dots,v_{i_k}^{\delta'(v_{i_k})}\}$.
\end{Notation}

\begin{Notation} \label{Def cycle}
For $d\geq 3$, a \textbf{$d$-cycle} is the graph $C^d$  with vertex set $V(C^d)= \{v_1,\dots, v_d\}$ and edge set $E(C^d)=\{v_1v_2,v_2v_3,\dots,v_dv_1\}$.
We denote this graph as $C^d = v_1v_2 \dots v_dv_1$.
\end{Notation}

\begin{Note}
As with weighted graphs, we represent weighted vertex covers graphically.
For instance, the following sketch represents the weighted vertex cover
$\{v_1^a,v_2^b,v_4^d,v_5^a\}$ of the weighted 5-cycle from
Theorem~\ref{intthm120507a} in the introduction: 
$$\xymatrix{
*+[F]{v_1^{a}} \ar@{-}[d]_a \ar@{-}[r]^a &*+[F]{v_2^b}\ar@{-}[r]^b &v_3 \ar@{-}[ld]^c\\
*+[F]{v_5^a} \ar@{-}[r]_d & *+[F]{v_4^d}.
}$$
\end{Note}

\begin{Example}
Let $C^5_{\omega}$ denote the following weighted 5-cycle:
$$\xymatrix{
v_1 \ar@{-}[d]_2 \ar@{-}[r]^2 &v_2\ar@{-}[r]^5 &v_3 \ar@{-}[ld]^3\\
v_5\ar@{-}[r]_4 & v_4
}$$
Then the first sketch in the following display does not represent a
weighted vertex cover of $C^5_{\omega}$
because the edges $v_1v_2$ and $v_2v_3$ are not covered.
$$\xymatrix{
*+[F]{v_1^{3}} \ar@{-}[d]_2 \ar@{-}[r]^2 &*+[F]{v_2^6}\ar@{-}[r]^5 &v_3 \ar@{-}[ld]^3\\
*+[F]{v_5^2} \ar@{-}[r]_4 & *+[F]{v_4^3}
}
\qquad\qquad
\qquad\qquad
\xymatrix{
*+[F]{v_1^{2}} \ar@{-}[d]_2 \ar@{-}[r]^2 &*+[F]{v_2^5}\ar@{-}[r]^5 &v_3 \ar@{-}[ld]^3\\
*+[F]{v_5^2} \ar@{-}[r]_4 & *+[F]{v_4^3}.
}$$
The second sketch in this display is a weighted vertex cover of $C^5_{\omega}$.
\end{Example}

We define an ordering of weighted vertex covers next. 

\begin{Definition} Let $G_{\omega}$ be a weighted graph. 
Given two weighted vertex covers $(V',\delta')$ and $(V'',\delta'')$,
write $(V'',\delta'')\leq (V',\delta')$ if $V''\subseteq V'$ and for all $v_i\in V''$ we have $\delta'(i)\leq \delta''(i)$.
A weighted vertex cover $(V',\delta')$ is \textbf{minimal} if there does not exist another weighted vertex cover $(V'',\delta'')$ such that $(V'',\delta'')< (V',\delta')$. We define $|(V',\delta')|=|V'|$.

The graph $G$ is said to be \textbf{unmixed} if all of the minimal vertex covers of $G$ have the same cardinality.  If $G$ is not unmixed then $G$ is \textbf{mixed}. Similarly, a weighted graph $G_{\omega}$ is \textbf{unmixed} if all of the minimal weighted vertex covers of $G_{\omega}$ have the same cardinality.  If $G_{\omega}$ is not unmixed then $G_{\omega}$ is \textbf{mixed}. 
\end{Definition}

\begin{Example}\label{ex120507a}
Let $C^5_{\omega}$ denote the following weighted 5-cycle:
$$\xymatrix{
v_1 \ar@{-}[d]_2 \ar@{-}[r]^2 &v_2\ar@{-}[r]^5 &v_3 \ar@{-}[ld]^3\\
v_5\ar@{-}[r]_4 & v_4
}$$
Then the first sketch in the following display is
a weighted vertex cover of $C^5_{\omega}$
that is not minimal,
because the weighted vertex $v_5^2$ is not needed:
$$\xymatrix{
*+[F]{v_1^{2}} \ar@{-}[d]_2 \ar@{-}[r]^2 &*+[F]{v_2^5}\ar@{-}[r]^5 &v_3 \ar@{-}[ld]^3\\
*+[F]{v_5^2} \ar@{-}[r]_4 & *+[F]{v_4^3}.
}
\qquad\qquad
\qquad\qquad
\xymatrix{
*+[F]{v_1^{2}} \ar@{-}[d]_2 \ar@{-}[r]^2 &*+[F]{v_2^5}\ar@{-}[r]^5 &v_3 \ar@{-}[ld]^3\\
v_5 \ar@{-}[r]_4 & *+[F]{v_4^2}.
}$$
The second sketch in this display is also
a non-minimal weighted vertex cover of $C^5_{\omega}$
because the weight on $v_4$ can be increased to make
the next weighted vertex cover which is minimal:
$$\xymatrix{
*+[F]{v_1^{2}} \ar@{-}[d]_2 \ar@{-}[r]^2 &*+[F]{v_2^5}\ar@{-}[r]^5 &v_3 \ar@{-}[ld]^3\\
v_5 \ar@{-}[r]_4 & *+[F]{v_4^3}.
}$$
Note that this minimal weighted vertex cover can be obtained by removing the
superfluous vertex from the first non-minimal weighted vertex cover.
\end{Example}

The following  results will be useful in the sections that follow.
The first one says that, if the weight on a vertex 
in a weighted vertex cover can be increased without bound,
then that vertex can be removed from the weighted vertex cover.

\begin{Lemma}\label{lem120507a}
Let $G_{\omega}$ be a weighted graph, and assume that, for $j=1,2,\ldots$
we have a weighted vertex cover
$V_j=\{v_1^{a_1},\ldots,v_{n}^{a_n},v_{n+1}^{b_j}\}$ of $G_{\omega}$. 
If $b_1<b_2<\cdots$, then $V'=\{v_1^{a_1},\ldots,v_{n}^{a_n}\}$
is also a weighted vertex cover 
of $G_{\omega}$. 
\end{Lemma}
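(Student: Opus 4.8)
The plan is to exploit two facts: the edge set $E$ is finite, so the edge weights are bounded above, while the sequence $b_1<b_2<\cdots$ of natural numbers is strictly increasing and hence unbounded. The conceptual point behind the hypothesis is that the vertex $v_{n+1}$ is forced to carry arbitrarily large weight across the covers $V_j$, and a vertex whose weight exceeds the weight of an edge cannot cover that edge (condition~\eqref{item120507a}--\eqref{item120507b} of Definition~\ref{Def weighted vertex cover} requires the vertex weight to be $\leq\omega(e)$). So by passing to a single index $j$ with $b_j$ enormous, I can render $v_{n+1}$ useless for covering, thereby pushing all the covering responsibility onto $v_1,\dots,v_n$, whose weights $a_1,\dots,a_n$ do not vary with $j$.

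Concretely, write $\delta_j$ for the weight function of $V_j$, so that $\delta_j(v_i)=a_i$ for $i\leq n$ and $\delta_j(v_{n+1})=b_j$. First I would set $M=\max_{e\in E}\omega(e)$, which exists since $G$ is finite. Because $b_1<b_2<\cdots$ in $\mathbb{N}$, we have $b_j\to\infty$, so I may fix one index $j$ with $b_j>M$. For this $j$ the vertex $v_{n+1}$ covers no edge $e$ at all, since covering would demand $\delta_j(v_{n+1})=b_j\leq\omega(e)\leq M$, contradicting $b_j>M$. Now I invoke the hypothesis that $V_j$ is a weighted vertex cover: for every edge $e=v_kv_\ell\in E$, one of the covering conditions of Definition~\ref{Def weighted vertex cover} holds. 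As $v_{n+1}$ covers nothing, the covering vertex must be some $v_i$ with $i\leq n$, with $v_i\in\{v_k,v_\ell\}$ and $a_i=\delta_j(v_i)\leq\omega(e)$. None of these data depend on $j$, so the identical conditions certify that the pair $(V',\delta')$ with $V'=\{v_1,\dots,v_n\}$ and $\delta'(v_i)=a_i$ satisfies~\eqref{item120507a}--\eqref{item120507b} for every edge; in particular $V'$ is a vertex cover of $G$. This is exactly the statement that $V'$ is a weighted vertex cover of $G_{\omega}$.

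I do not expect a serious obstacle here. The one point that deserves care is that, a priori, different edges might appear to require different thresholds on $b_j$; the finiteness of $E$ is precisely what lets me replace these by the single uniform bound $M$ and hence choose one index $j$ that simultaneously neutralizes $v_{n+1}$ on every edge. Everything else is a direct unwinding of the definition of covering, and strict monotonicity of the $b_j$ is used only to guarantee the unboundedness $b_j\to\infty$.
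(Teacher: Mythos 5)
Your proof is correct and follows essentially the same strategy as the paper's: use the unboundedness of the $b_j$ to find an index where $v_{n+1}$ cannot cover the relevant edges, forcing the covering onto $v_1^{a_1},\dots,v_n^{a_n}$. The only cosmetic difference is that you choose a single uniform $j$ via $M=\max_e\omega(e)$, whereas the paper picks a suitable $j$ separately for each edge incident to $v_{n+1}$; both are valid and equivalent in substance.
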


\begin{proof}
Let $e=v_iv_{n+1}$ be an edge in $G_{\omega}$ with weight $\omega(e)$.
By assumption, there is an index $j$ such that
$b_j>\omega(e)$. Since $V_j$ is a weighted  vertex cover 
of $G_{\omega}$, the edge $e$ must be covered by $v_i$,
that is, we must have $i\leq n$ and $a_i\leq\omega(e)$.
Since this is so for each edge of the form $e=v_iv_{n+1}$,
it follows that every edge of $G_{\omega}$ is covered by
one of the weighted vertices $v_1^{a_1},\ldots,v_{n}^{a_n}$.
In other words, $V'$ is also a weighted vertex cover 
of $G_{\omega}$, as desired.
\end{proof} 

\begin{Proposition} \label{Prop mwvc contained in wvc}
Let $G_{\omega}$ be a weighted graph. Then for every weighted vertex cover $(V',\delta')$ of $G_\omega$ there is a minimal weighted vertex cover $(V''',\delta''')$ 
of $G_{\omega}$ such that $(V''',\del''')\leq(V',\del')$.
\end{Proposition}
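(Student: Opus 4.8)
The plan is to apply Zorn's Lemma to the set
$$S=\{(W,\eta) : (W,\eta)\text{ is a weighted vertex cover of }G_\omega\text{ with }(W,\eta)\leq(V',\delta')\}$$
partially ordered by $\leq$, and to extract a \emph{minimal} element of $S$. First I would dispatch the easy points: the relation $\leq$ is a partial order (transitivity is immediate from the definition), the set $S$ is nonempty since $(V',\delta')\in S$, and, crucially, any element of $S$ that is minimal \emph{within $S$} is automatically a minimal weighted vertex cover of $G_\omega$. This last point follows from transitivity: if $(W'',\eta'')<(V''',\delta''')$ were a weighted vertex cover strictly below a minimal element $(V''',\delta''')$ of $S$, then $(W'',\eta'')<(V',\delta')$ as well, so $(W'',\eta'')\in S$, contradicting minimality in $S$.

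The heart of the matter is the chain condition for Zorn's Lemma: every nonempty chain $\mathcal{C}\subseteq S$ has a lower bound in $S$. Given such a chain, the vertex sets shrink as one descends and all lie in the finite set $V$, so there is a smallest vertex set $W_\infty$ occurring among them. Let $\mathcal{C}'$ be the sub-chain of those covers whose vertex set is exactly $W_\infty$. Any lower bound for $\mathcal{C}'$ is automatically a lower bound for all of $\mathcal{C}$, because every other element of $\mathcal{C}$ has vertex set strictly containing $W_\infty$ and hence lies above all of $\mathcal{C}'$; so it suffices to bound $\mathcal{C}'$. Along $\mathcal{C}'$ the weight functions are pointwise non-decreasing as one descends, and the subtlety — the reason the order fails to be well-founded — is that these weights may be unbounded, since a single vertex can carry arbitrarily large weight in a weighted vertex cover.

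To build the lower bound for $\mathcal{C}'$ I would let $U\subseteq W_\infty$ be the set of vertices whose weight is unbounded along $\mathcal{C}'$, discard them, and equip each remaining vertex $v\in W_\infty\setminus U$ with its supremal weight $\eta'(v)=\sup_\alpha\eta_\alpha(v)$, which is a well-defined natural number that is actually attained. The main obstacle is proving that $(W_\infty\setminus U,\eta')$ is still a weighted vertex cover, and this is precisely where Lemma~\ref{lem120507a}, and the idea behind it, enters. Fixing an edge $e$, far enough down $\mathcal{C}'$ every vertex of $U$ that is an endpoint of $e$ has weight exceeding $\omega(e)$ and so no longer covers $e$; since the covers in $\mathcal{C}'$ are valid, $e$ must then be covered by an endpoint $v\in W_\infty\setminus U$ with weight $\leq\omega(e)$, and far enough down this weight has already reached $\eta'(v)$, giving $\eta'(v)\leq\omega(e)$. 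In particular no edge can have both endpoints in $U$, since otherwise validity would fail far down the chain. Hence $(W_\infty\setminus U,\eta')$ covers every edge.

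A short check using the non-decreasingness of the weights shows $(W_\infty\setminus U,\eta')\leq(W_\alpha,\eta_\alpha)$ for every element of $\mathcal{C}$, so it is a lower bound lying in $S$, and Zorn's Lemma then yields a minimal element of $S$, i.e.\ the desired minimal weighted vertex cover below $(V',\delta')$. I expect the only genuinely fiddly step to be choosing a single index $\alpha$ far enough down to simultaneously control the finitely many relevant vertices — the already-stabilized weights on $W_\infty\setminus U$ and the blown-up weights on the endpoints of $e$ lying in $U$ — but this is handled routinely by the fact that $\mathcal{C}'$ is totally ordered.
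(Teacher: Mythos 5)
Your proof is correct, but it takes a genuinely different route from the paper's. The paper runs a two-stage greedy descent: first delete superfluous vertices one at a time (terminating because $V'$ is finite), then increase the weights on the surviving vertices one at a time, invoking Lemma~\ref{lem120507a} to show that no weight can be increased forever once no vertex can be deleted; the resulting pair is minimal by construction. You instead apply the dual form of Zorn's Lemma to the down-set of $(V',\delta')$, and the real work is your verification of the chain condition: isolating the stabilized vertex set $W_\infty$, discarding the vertices with unbounded weight, and assigning the attained suprema to the rest. That verification is essentially a simultaneous, order-theoretic repackaging of the idea behind Lemma~\ref{lem120507a} (an endpoint whose weight blows up along the chain eventually fails to cover any fixed edge, so the other endpoint must), and all the details you flag as fiddly do go through because $V$ is finite and a finite subset of a chain has a minimum. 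What the paper's argument buys is elementarity and effectivity --- it is an explicit finite algorithm with no appeal to choice, which matches the computational flavor of Example~\ref{ex120507a}; what yours buys is that you never have to worry about whether the two stages interact (e.g.\ whether increasing weights could re-enable a deletion), since Zorn handles all descents uniformly. Both hinge on the same non-well-foundedness obstruction, which you correctly identify as the crux.
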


\begin{proof}
If $(V',\delta')$ is itself a minimal weighted vertex cover for $G_{\omega}$, then we are done. If $(V',\delta')$ is not minimal, then either there is  a $v_i\in V'$ that can be removed or for some $v_i\in V'$ the function $\delta'(v_i)$ can be increased,
as in Example~\ref{ex120507a}.
In the first case, remove vertices  from $V'$
until the removal of one more vertex creates an ordered pair that is no longer a weighted vertex cover. Notice that this process terminates in finitely many steps because $V'$ is finite. Let us denote this new weighted vertex cover as $(V'',\delta'')$. 
(If no vertices can be removed, then set $(V'',\delta'')=(V',\delta')$.)

Now, if  $(V'',\delta'')$ is a minimal weighted vertex cover for $G_{\omega}$, then we are done. If it is not minimal, then we can increase the weight of at least one of the vertices in $V'$. Increase the  weight of each vertex (in sequence) such that any further increase would cause the ordered pair to not be a weighted vertex cover. 
This process also terminates in finitely many steps because the weight of each vertex of $V''$ can not be increased without bound, by Lemma~\ref{lem120507a}. We will denote this new ordered pair $(V''',\delta''')$. Since no vertices can be removed from $(V''',\delta''')$ and the weight of each $v_i\in V'''$ can not be increased,
the pair $(V''',\delta''')$ is a minimal weighted vertex cover for $G_{\omega}$ such that $(V''',\del''')\leq(V',\del')$. 
\end{proof} 

\begin{Proposition} \label{Prop mvc implies mwvc}
Let $G_{\omega}$ be a weighted graph. Then every minimal vertex cover of the unweighted graph $G$ occurs as a minimal weighted vertex cover of $G_{\omega}$. 
\end{Proposition}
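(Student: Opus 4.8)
The plan is to produce, for each minimal vertex cover $V'$ of the unweighted graph $G$, a weight function $\delta'''$ so that $(V',\delta''')$ is a minimal weighted vertex cover of $G_{\omega}$. First I would exhibit \emph{some} weighted vertex cover supported on $V'$, then invoke Proposition~\ref{Prop mwvc contained in wvc} to pass down to a minimal one, and finally argue that this minimization cannot shrink the support below $V'$.

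For the first step, define $\delta'\colon V'\to\mathbb{N}$ by $\delta'(v_i)=1$ for every $v_i\in V'$. Since $V'$ is a vertex cover of $G$, each edge $e=v_iv_j$ has an endpoint, say $v_i$, lying in $V'$; and because $\omega(e)\in\mathbb{N}=\{1,2,3,\dots\}$ we have $\delta'(v_i)=1\leq\omega(e)$, so $v_i$ covers $e$ in the sense of Definition~\ref{Def weighted vertex cover}. Thus $(V',\delta')$ is a weighted vertex cover of $G_{\omega}$.

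Next I would apply Proposition~\ref{Prop mwvc contained in wvc} to $(V',\delta')$ to obtain a minimal weighted vertex cover $(V''',\delta''')$ with $(V''',\delta''')\leq(V',\delta')$. By the definition of the ordering this forces $V'''\subseteq V'$. The key point is then that $V'''$ cannot be a proper subset of $V'$: by Definition~\ref{Def weighted vertex cover} the support $V'''$ of a weighted vertex cover is itself a vertex cover of the unweighted graph $G$, while $V'$ was chosen minimal in the sense of Definition~\ref{Def vertex cover}, so no proper subset of $V'$ covers $G$. Hence $V'''=V'$, and $(V',\delta''')$ is the desired minimal weighted vertex cover supported on $V'$.

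I expect the only delicate point to be this last comparison of supports: one must keep in mind that the minimality of $V'$ is taken in the unweighted sense, whereas the conclusion concerns minimality in the weighted ordering, and the two are linked precisely through the observation that every weighted vertex cover has an (unweighted) vertex cover as its support. Everything else reduces to a direct application of the two preceding results.
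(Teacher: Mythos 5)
Your proof is correct and follows essentially the same route as the paper: exhibit a weighted vertex cover supported on $V'$, minimize it via Proposition~\ref{Prop mwvc contained in wvc}, and use minimality of $V'$ as an unweighted cover (together with the fact that the support of any weighted vertex cover is a vertex cover of $G$) to conclude the support does not shrink. The only difference is your choice of initial weights $\delta'\equiv 1$ in place of the paper's $\delta'(v_i)=\min\{\omega(e)\mid e=v_iv_j\in E\}$; both choices work, since each yields a weighted vertex cover supported exactly on $V'$ to feed into the minimization.
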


\begin{proof}
Let $V'$ be a minimal vertex cover for $G$. We need to show that $(V',\delta')$ is a minimal weighted vertex cover for $G_{\omega}$ for some $\delta'$. 
For each $v_i\in V'$ define 
$$\delta'(v_i) = \min\{\omega(e)\mid e=v_iv_j\in E \text{ for some } v_j\}.$$
We claim that $(V',\delta')$ is a  weighted vertex cover 
for $G_{\omega}$. Let $e=v_iv_j$ be an edge $G$. If $v_i\in V'$, then by definition of $\delta'$ we have  $\delta'(v_i)\leq \omega(e)$; and if $v_j\in V'$, then 
$\delta'(v_j)\leq \omega(e)$. Hence $(V',\delta')$ is a weighted vertex cover. 

Proposition~\ref{Prop mwvc contained in wvc} provides
a minimal weighted vertex cover $(V''',\delta''')$ 
of $G_{\omega}$ such that $(V''',\del''')\leq(V',\del')$.
Since $V'$ is a minimal vertex cover,  we can not remove any vertices from $V'$. 
Since $V'''$ is a vertex cover for $G$, the condition $V'''\subseteq V'$ implies that $V'''=V'$.
Thus, $V'$ 
occurs as the minimal weighted vertex cover $(V''',\delta''')$. 
\end{proof} 

\begin{Proposition}\label{Prop mixed ei then mixed wei}
If $G$ is mixed, then $G_{\omega}$ is mixed.
\end{Proposition}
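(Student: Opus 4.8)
The plan is to prove the contrapositive-friendly direct statement by leveraging Proposition~\ref{Prop mvc implies mwvc}, which is tailor-made for exactly this transfer. Since $G$ is mixed, by definition the minimal vertex covers of $G$ do not all have the same cardinality, so I can fix two minimal vertex covers $V_1$ and $V_2$ of $G$ with $|V_1|\neq|V_2|$. The whole point will be to lift these two combinatorial objects to minimal \emph{weighted} vertex covers of $G_{\omega}$ of the same respective sizes, witnessing that $G_{\omega}$ is mixed as well.

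The key step is the application of Proposition~\ref{Prop mvc implies mwvc}: it provides weight functions $\delta_1'$ and $\delta_2'$ such that $(V_1,\delta_1')$ and $(V_2,\delta_2')$ are each minimal weighted vertex covers of $G_{\omega}$. Here I would invoke the convention $|(V',\delta')|=|V'|$ from the definition of minimality, which immediately gives $|(V_1,\delta_1')|=|V_1|$ and $|(V_2,\delta_2')|=|V_2|$. Since these are unequal by the choice of $V_1$ and $V_2$, I have produced two minimal weighted vertex covers of $G_{\omega}$ of differing cardinality, so $G_{\omega}$ is mixed by definition.

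I do not expect any genuine obstacle here: the result is essentially a one-line corollary of Proposition~\ref{Prop mvc implies mwvc} together with the observation that the cardinality of a weighted vertex cover is, by definition, just the cardinality of its underlying vertex set, so cardinalities are preserved under the lifting. The only point requiring minor care is bookkeeping of the definitions (matching ``mixed'' for $G$ against ``mixed'' for $G_{\omega}$, and confirming the size convention), but no new construction or estimate is needed beyond citing the earlier proposition.
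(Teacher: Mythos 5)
Your proposal is correct and matches the paper's own proof essentially verbatim: both take two minimal vertex covers of $G$ of different cardinalities, lift them to minimal weighted vertex covers of $G_{\omega}$ via Proposition~\ref{Prop mvc implies mwvc}, and conclude mixedness from the cardinality convention $|(V',\delta')|=|V'|$. No issues.
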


\begin{proof}
Assume that 
$G$ is mixed. Then there are  minimal vertex covers $V'$ and $V''$ for $G$ 
such that $|V'|\neq  |V''|$. 
By Proposition~\ref{Prop mvc implies mwvc},   we have  functions $\delta'\colon V'\rightarrow \mathbb{N}$ and $\delta''\colon V''\rightarrow \mathbb{N}$ such that $(V',\delta')$ and $(V'',\delta'')$ are minimal weighted vertex covers for $G_{\omega}$. Since $|(V',\delta')|\neq  |(V'',\delta'')|$ we conclude that $G_{\omega}$ is mixed.
\end{proof}

\section{Monomial Ideals}
\label{sec120507b}

In this section, we include some background material on monomial ideals in
the polynomial ring
$R=A[X_1,\ldots,X_d]$.

\begin{Definition}\label{Def monomial set}
A \emph{monomial} in $R$ is an element of the form $X_1^{a_1}\cdots X_d^{a_d}$
where the $a_i$ are non-negative integers.
A \textbf{monomial ideal} in $R$ is an ideal generated by a (possibly empty)
set of monomials of $R$.
For each monomial ideal $I\subset R$, let $\llbracket I \rrbracket$ denote the set of all monomials contained in $I$.
\end{Definition}

\begin{Definition}\label{Def Ideal generated by vertex cover}
For each subset $V'\subseteq V$, let $P(V')\subseteq R$ be the ideal ``generated by the elements of $V'$'':
$$P(V')=(X_i\mid v_i\in V')R.$$
For each subset $V'\subseteq V$ and for each function $\delta' \colon V'\rightarrow \mathbb{N}$, let $P(V', \delta ')\subseteq R$ be the ideal ``generated by the elements of $(V',\delta')$'': 
\[
P(V', \delta ') = (X_i^{\delta'(v_i)}\mid v_i \in V')R.
\]
We say that 
the ideals $P(V',\delta')$ are \textbf{m-irreducible},
to indicate that they are irreducible with respect to 
intersections of monomial ideals.
\end{Definition}

\begin{Note}
The notation $V'=\{v_{i_1}^{\delta'(v_{i_1})},\dots,v_{i_k}^{\delta'(v_{i_k})}\}$
is handy because it essentially lists the generators of $P(V', \delta ')$.
\end{Note}

\begin{Ex}\label{Ex P}
The ideals $P(V',\delta')$ coming from the three weighted vertex covers
in Example~\ref{ex120507a} are
$(X_1^2,X_2^5,X_4^3,X_5^2)R$,
$(X_1^2,X_2^5,X_4^2)R$,
and
$(X_1^2,X_2^5,X_4^3)R$.
Notice that the ideal corresponding to the minimal weighted vertex cover is contained 
in the ideals corresponding to non-minimal weighted vertex covers.
\end{Ex}

\begin{Ex}
We have $P(\emptyset)=(\emptyset)R=0$ and 
$P(\emptyset,\delta')=(\emptyset)R=0$.
\end{Ex}

\begin{Note}\label{Note 1}
A monomial ideal $I\subseteq R$ is of the form $P(V',\delta')$ if and only if it is generated by ``pure powers'' of the variables, that is, by monomials of the form $X_i^{e_i}$. 
When $A$ is a field, 
the ideals $P(V')$ are precisely the prime monomial
ideals, and
the ideals $P(V',\delta')$ are precisely the irreducible monomial
ideals. 
\end{Note}

\begin{Definition} \label{Def m-height}
Given an ordered pair $(V',\delta')$ the  \textbf{m-height} of $P(V',\delta')$ is 
$$\mht(P(V',\delta'))=|V'|.$$
Given a monomial ideal $I\subset R$ such that $I = \bigcap_{i=1}^m P(V_i,\delta_i)$, the \textbf{m-height} of $I$ is
$$\mht(I) = \min_i\{\mht(P(V_i,\delta_i))\}.$$
\end{Definition}

\begin{Note}
Assume that  $A$ is a field.
In this case, each ideal $P(V')$ is prime in $R$,
and $\mht(P(V',\delta'))=\mht(P(V'))$ is the same as
$\Ht(P(V',\delta'))=\Ht(P(V'))$.
We use the notation $\mht$ in general to indicate that we are taking the
height with respect to monomial prime ideals.
\end{Note}

\begin{Definition}\label{Def graphs m-unmixed}
Assume that $I = \bigcap_{i=1}^m P(V_i,\delta_i)$ is an irredundant m-irreducible decomposition, that is, such that there are no
containment relations between the ideals in the intersection. Then $I$ is \textbf{m-unmixed} provided that $\mht(P(V_i,\delta_i)) =\mht(P(V_j,\delta_j))$ for all $i,j$, that is, if $\mht(P(V_i,\delta_i))=\mht(I)$ for all $i$. 
We say that $I$ is \textbf{m-mixed} if it is not m-unmixed.
\end{Definition}

\begin{Note}
If $A$ is a field, then a monomial ideal $I\subset R$ is m-unmixed if and only if
it is unmixed.
\end{Note}

\section{Weighted Edge Ideals and Their Decompositions}
\label{sec120508a}

In this section, we define the edge ideal of a weighted graph 
and establish some of its fundamental properties.
Recall that $G$ is a graph with vertex set $V$ and edge set $E$, and
$R=A[X_1,\ldots,X_d]$.

\begin{convention*}
In this section, $G_{\omega}$ is a weighted graph.
\end{convention*}

\begin{Definition} \label{Def edge ideal}
The \textbf{edge ideal} associated to $G$ is the ideal $I(G) \subseteq R$ that is ``generated by the edges of G'':
\[
I(G) = (X_iX_j\mid v_iv_j \in E)R.
\]
The \textbf{weighted edge ideal} associated to $G_\omega$ is the ideal $I(G_{\omega})\subseteq R$ that is ``generated by the weighted edges of G'':
\[
I(G_{\omega})= (X_i^{\omega(e)}X_j^{\omega(e)}\mid e= v_iv_j \in E)R.
\]
\end{Definition}

\begin{Note}\label{note120508b}
M-irreducible decompositions for the edge ideal $I(G)$
are $I(G)=\bigcap_{V'}P(V')=\bigcap_{\min V'}P(V')$
where the first intersection is taken over the set of all vertex covers of $G$,
and the second intersection is taken over the set of all minimal vertex covers of $G$;
see, e.g., \cite[Theorem 5.3.9]{SSW}.
Furthermore, the  second decomposition is irredundant. 
One of the points of this section is to provide analogous decompositions
for $I(G_{\omega})$. This is done in Theorem~\ref{Cor I(G)= int.P}.
\end{Note}

The following lemma is the first key to decomposing the edge ideal of
$G_{\omega}$.

\begin{Lemma} \label{lem vertex cover containment}
Given subsets $V', V'' \subseteq V$ and functions $\delta'\colon  V'\rightarrow \mathbb{N}, \delta''\colon V''\rightarrow \mathbb{N}$, we have $(V'', \delta '') \leq (V', \delta ')$ if and only if $P(V'', \delta'')\subseteq P(V', \delta')$.
\end{Lemma}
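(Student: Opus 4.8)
The plan is to reduce the containment $P(V'',\delta'')\subseteq P(V',\delta')$ to a statement about individual generators, using the fundamental fact that membership in a monomial ideal is detected by divisibility: if $I=(m_1,\dots,m_r)R$ is a monomial ideal, then a monomial $m$ lies in $I$ if and only if $m$ is divisible by some $m_k$. Since the ideals $P(V',\delta')$ and $P(V'',\delta'')$ are generated by pure powers of the variables, this lets me translate divisibility of pure powers into numerical inequalities on their exponents, which is precisely the data encoded in the ordering on weighted vertex covers.

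For the forward implication, I would assume $(V'',\delta'')\leq(V',\delta')$, so $V''\subseteq V'$ and $\delta'(v_j)\leq\delta''(v_j)$ for every $v_j\in V''$. It suffices to check that each generator $X_j^{\delta''(v_j)}$ of $P(V'',\delta'')$ lies in $P(V',\delta')$. Since $v_j\in V''\subseteq V'$, the monomial $X_j^{\delta'(v_j)}$ is a generator of $P(V',\delta')$, and because $\delta'(v_j)\leq\delta''(v_j)$ this generator divides $X_j^{\delta''(v_j)}$. Hence every generator of $P(V'',\delta'')$ belongs to $P(V',\delta')$, giving the containment.

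For the converse, I would assume $P(V'',\delta'')\subseteq P(V',\delta')$ and fix $v_j\in V''$. Then the pure power $X_j^{\delta''(v_j)}$ lies in $P(V',\delta')$, so by the divisibility criterion it is divisible by some generator $X_i^{\delta'(v_i)}$ with $v_i\in V'$. Because $X_j^{\delta''(v_j)}$ involves only the variable $X_j$, divisibility forces $i=j$; in particular $v_j\in V'$, and comparing exponents gives $\delta'(v_j)\leq\delta''(v_j)$. As $v_j\in V''$ was arbitrary, this yields $V''\subseteq V'$ together with $\delta'(v_j)\leq\delta''(v_j)$ for all $v_j\in V''$, which is exactly $(V'',\delta'')\leq(V',\delta')$.

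The only genuine subtlety lies in the converse, specifically in the step that a pure power $X_j^{n}$ can belong to $P(V',\delta')$ only by being divisible by a generator in the \emph{same} variable $X_j$. I expect this to be the main point to argue carefully: it rests on the divisibility criterion for monomial-ideal membership, combined with the observation that a generator $X_i^{\delta'(v_i)}$ divides $X_j^{n}$ only when $i=j$, since otherwise $X_i$ would have to appear in $X_j^{n}$ with positive exponent. Everything else is a direct unwinding of the definitions of $P(-,-)$ and of the ordering $\leq$ on weighted vertex covers.
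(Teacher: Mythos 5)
Your proposal is correct and follows essentially the same route as the paper's own proof: both directions reduce to checking generators, with the forward implication using that $X_j^{\delta'(v_j)}$ divides $X_j^{\delta''(v_j)}$, and the converse using the divisibility criterion for membership in a monomial ideal to force the dividing generator to be a power of the same variable. No substantive differences.
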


\begin{proof}
Let us begin by assuming that $(V'',\delta'')\leq(V',\delta')$. Then we have 
$V''\subseteq V'$ and $\delta'(v_i)\leq\delta''(v_i)$ for all $v_i\in V''$. To show that $P(V'',\delta'')\subseteq P(V',\delta')$ we need to show that each generator $X_i^{\delta''(v_i)}$ of $P(V'',\delta'')$ is in $P(V',\delta')$. By assumption, we have $v_i\in V''\subseteq V'$ and $\delta''(v_i)\geq \delta'(v_i)$. Thus, 
the condition $X_i^{\delta'(v_i)}\in P(V',\delta')$  implies that $X_i^{\delta''(v_i)}= X_i^{\delta''(v_i)-\delta'(v_i)} X_i^{\delta'(v_i)}\in P(V',\delta')$. Hence $P(V'',\delta'')\subseteq P(V',\delta ')$.

For the converse assume that $P(V'',\delta'')\subseteq P(V',\delta')$. Then $X_i^{\del''(v_i)}\in P(V',\del')$ for all $v_i\in V''$. Therefore, there is a generator $X_j^{\delta'(v_j)}\in P(v',\delta')$ such that $X_j^{\delta'(v_j)}|X_i^{\delta'(v_i)}$. Since $\delta'(v_j)\geq 1$, it follows that $i=j$ and $\delta'(v_j)\leq \delta''(v_i)$. Thus, $v_i=v_j\in V'$ and $\delta'(v_i)=\delta'(v_j)\leq \delta''(v_j)$.
Since this is so
for all $v_i\in V''$, we have  $(V'',\del'')\leq(V',\del')$, by definition.
\end{proof} 

The next result is the second key to decomposing $I(G_{\omega})$.

\begin{Lemma} \label{Thrm weighted edge ideal containment}
Given a subset $V'\subseteq V$ and a function $\delta'\colon V'\to\mathbb N$,
one has
$I(G_{\omega}) \subseteq P(V',\delta ')$ if and only if $(V', \delta ')$ is a weighted vertex cover of $G_{\omega}$.
\end{Lemma}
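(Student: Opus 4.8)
The plan is to prove both directions by unwinding the definition of the weighted edge ideal and the divisibility relations for monomials. Recall that $I(G_{\omega})$ is generated by the monomials $X_i^{\omega(e)}X_j^{\omega(e)}$ for each edge $e=v_iv_j\in E$, and that $P(V',\delta')$ is generated by the pure powers $X_k^{\delta'(v_k)}$ for $v_k\in V'$. The essential fact I will use repeatedly is that, for a monomial ideal generated by pure powers, a monomial $X_1^{c_1}\cdots X_d^{c_d}$ lies in $P(V',\delta')$ if and only if it is divisible by some generator $X_k^{\delta'(v_k)}$, equivalently, there is some $v_k\in V'$ with $c_k\geq\delta'(v_k)$.

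First I would prove the ``if'' direction. Assume $(V',\delta')$ is a weighted vertex cover of $G_{\omega}$. To show $I(G_{\omega})\subseteq P(V',\delta')$ it suffices to show that each generator $X_i^{\omega(e)}X_j^{\omega(e)}$ of $I(G_{\omega})$ lies in $P(V',\delta')$, since these generate the ideal. Fix an edge $e=v_iv_j$. Because $(V',\delta')$ is a weighted vertex cover, one of the two covering conditions in Definition~\ref{Def weighted vertex cover} holds; say $v_i\in V'$ and $\delta'(v_i)\leq\omega(e)$ (the other case is symmetric). Then the exponent of $X_i$ in the generator is $\omega(e)\geq\delta'(v_i)$, so $X_i^{\delta'(v_i)}$ divides $X_i^{\omega(e)}X_j^{\omega(e)}$, whence the generator lies in $P(V',\delta')$. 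This gives the containment.

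Next I would prove the ``only if'' direction. Assume $I(G_{\omega})\subseteq P(V',\delta')$, and let $e=v_iv_j$ be an arbitrary edge of $G$; I must verify that $e$ is covered by $(V',\delta')$. Since $X_i^{\omega(e)}X_j^{\omega(e)}\in I(G_{\omega})\subseteq P(V',\delta')$, this monomial is divisible by some generator $X_k^{\delta'(v_k)}$ with $v_k\in V'$. A pure power $X_k^{\delta'(v_k)}$ can divide $X_i^{\omega(e)}X_j^{\omega(e)}$ only if $X_k$ actually appears in the latter monomial, which forces $k\in\{i,j\}$. Taking $k=i$ (the case $k=j$ is symmetric), divisibility gives $\delta'(v_i)\leq\omega(e)$ with $v_i\in V'$, which is exactly covering condition~\eqref{item120507a}. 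As $e$ was arbitrary, every edge is covered, so $(V',\delta')$ is a weighted vertex cover, completing the proof.

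The only subtle point—and the one I would state carefully—is the divisibility characterization: that a generator $X_k^{\delta'(v_k)}$ dividing $X_i^{\omega(e)}X_j^{\omega(e)}$ forces $k\in\{i,j\}$ and the corresponding exponent inequality. This is precisely the kind of monomial bookkeeping already used in the proof of Lemma~\ref{lem vertex cover containment}, where the hypothesis $\delta'(v_j)\geq 1$ (so that the generators are genuine pure powers, not constants) is what rules out a variable $X_k$ not appearing in the product. I expect no genuine obstacle here; the main care is simply to handle the two symmetric cases $k=i$ and $k=j$ cleanly and to invoke ``a monomial lies in an ideal generated by pure powers iff it is divisible by one of those generators'' at the right moments.
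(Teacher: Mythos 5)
Your proof is correct and follows essentially the same route as the paper's: both directions reduce to checking generators, using that a monomial lies in $P(V',\delta')$ exactly when some pure-power generator $X_k^{\delta'(v_k)}$ divides it, with the observation $\delta'(v_k)\geq 1$ forcing $k\in\{i,j\}$ in the ``only if'' direction. No gaps; this matches the paper's argument.
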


\begin{proof}
Write $V'=\{v_{i_1},\dots, v_{i_k}\}$. Assume first that $I(G_{\omega}) \subseteq P(V',\delta ')$. Then for all $e=v_iv_j\in E$, we have $X_i^{\omega(e)}X_j^{\omega(e)} \in I(G_{\omega})\subseteq P(V', \delta ')=(X_{i_1}^{\delta ' (v_{i_1})}, \dots,X_{i_k}^{\delta ' (v_{i_k})})$. Thus $X_{i_\ell}^{\delta ' (v_{i_\ell})} | X_i^{\omega(e)}X_j^{\omega(e)}$ for some $1 \leq \ell \leq k$. Since $\delta'(v_{i_{\ell}})\geq 1$, we conclude that 
either $i_{\ell} = i$ and $\delta ' (v_{i_{\ell}}) \leq \omega(e)$,
or $i_{\ell} = j$ and $\delta ' (v_{i_{\ell}}) \leq \omega(e)$. Thus, 
either $v_i=v_{i_\ell}\in V'$ and $\delta ' (v_{i_{\ell}}) \leq \omega(e)$,
or $v_j=v_{i_\ell}\in V'$ and $\delta ' (v_{i_{\ell}}) \leq \omega(e)$.
Since this is so for each edge in $G$, we 
conclude that $(V',\delta')$ is a weighted vertex cover of $G_\omega$. 

For the converse assume that $(V', \delta')$ is a weighted vertex cover of $G_\omega$. We need to show that each generator of $I(G_{\omega})$ is an element of $P(V', \delta')$. Let $X_i^{\omega(e)}X_j^{\omega(e)}$ be a generator of $I(G_{\omega})$ corresponding to the edge $e=v_iv_j$ with weight $\omega(e)$ in $G_{\omega}$. Since $(V', \delta ')$ is a weighted vertex cover, we have two cases. The first case is when $v_i \in V'$ and $\delta ' (v_i) \leq \omega(e)$; in this case, we have ${X_i}^{\delta ' (i)} | X_i^{\omega(e)}X_j^{\omega(e)}$ and so $X_i^{\omega(e)}X_j^{\omega(e)} \in P(V', \delta ')$. Similarly, if $v_j\in V'$ and $\del'(v_j)\leq\omega(e)$, then $X_i^{\omega(e)}X_j^{\omega(e)} \in P(V', \delta ')$. Thus $I(G_{\omega}) \subseteq P(V',\delta ')$. 
\end{proof}

Here is our decomposition result for $I(G_{\omega})$.

\begin{Theorem} \label{Cor I(G)= int.P}
Let $G_{\omega}$ be a weighted graph with vertex set $V =\{v_1,\ldots, v_d\}$. Then 
$$I(G_{\omega})= \bigcap_{(V',\delta')}P(V',\delta')= \bigcap_{\text{min } (V',\delta')}P(V',\delta')$$
where the first intersection is taken over all weighted vertex covers of $G_{\omega}$ and the second intersection is taken over all minimal weighted vertex covers of $G_{\omega}$. Furthermore, the second decomposition is irredundant.
\end{Theorem}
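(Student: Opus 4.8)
The plan is to establish the two displayed equalities and the irredundancy claim as three separate matters, throughout using the fact that every ideal in sight is a monomial ideal: an intersection of monomial ideals is again monomial, and two monomial ideals coincide exactly when they contain the same monomials, so it suffices to compare the monomials they contain.

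For the first equality $I(G_{\omega})=\bigcap_{(V',\delta')}P(V',\delta')$, the inclusion $\subseteq$ is immediate from Lemma~\ref{Thrm weighted edge ideal containment}, which says precisely that $I(G_{\omega})\subseteq P(V',\delta')$ for every weighted vertex cover $(V',\delta')$; hence $I(G_{\omega})$ lies in the intersection of all of them. The reverse inclusion is the crux of the theorem and the step I expect to require the most care, since it is the only part not obtained by directly quoting an earlier result. I would argue its contrapositive monomial-by-monomial. Given a monomial $m=X_1^{c_1}\cdots X_d^{c_d}\notin I(G_{\omega})$, non-divisibility by each generator $X_i^{\omega(e)}X_j^{\omega(e)}$ means that for every edge $e=v_iv_j$ one has $c_i<\omega(e)$ or $c_j<\omega(e)$. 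I would then construct the explicit weighted vertex cover $(V',\delta')$ with $V'=V$ and $\delta'(v_i)=c_i+1$ for all $i$: the inequality just noted is exactly what verifies the conditions of Definition~\ref{Def weighted vertex cover} (note $c_i+1\le\omega(e)$ iff $c_i<\omega(e)$), while $m\notin P(V',\delta')$ because the exponent of $X_i$ in $m$ is $c_i<c_i+1=\delta'(v_i)$. This exhibits a weighted vertex cover whose ideal omits $m$, so $m$ is not in the intersection.

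For the second equality, the inclusion $\bigcap_{(V',\delta')}P(V',\delta')\subseteq\bigcap_{\text{min}}P(V',\delta')$ is automatic because the right-hand intersection runs over a subfamily. For the reverse, I would feed each weighted vertex cover $(V',\delta')$ into Proposition~\ref{Prop mwvc contained in wvc} to obtain a minimal one $(V''',\delta''')$ with $(V''',\delta''')\leq(V',\delta')$; Lemma~\ref{lem vertex cover containment} then upgrades this to $P(V''',\delta''')\subseteq P(V',\delta')$. Thus $\bigcap_{\text{min}}P\subseteq P(V''',\delta''')\subseteq P(V',\delta')$, and intersecting over all weighted vertex covers $(V',\delta')$ yields $\bigcap_{\text{min}}P\subseteq\bigcap_{(V',\delta')}P$, giving equality.

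Finally, for irredundancy I would show there are no containments among the ideals indexed by distinct minimal weighted vertex covers. If $P(V_1,\delta_1)\subseteq P(V_2,\delta_2)$ for two such covers, then Lemma~\ref{lem vertex cover containment} gives $(V_1,\delta_1)\leq(V_2,\delta_2)$; the same lemma also shows the assignment is injective, since $P(V_1,\delta_1)=P(V_2,\delta_2)$ would force $\leq$ in both directions and hence $(V_1,\delta_1)=(V_2,\delta_2)$. Therefore distinctness makes the inequality strict, contradicting minimality of $(V_2,\delta_2)$. This last part is routine bookkeeping; the genuinely substantive step remains the explicit construction of the omitting cover in the reverse half of the first equality.
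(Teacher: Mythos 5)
Your proof is correct, but for the first equality it takes a genuinely different route from the paper. The paper obtains $I(G_{\omega})=\bigcap_{(V',\delta')}P(V',\delta')$ abstractly: it cites the general fact that every monomial ideal admits a finite m-irreducible decomposition (into ideals of the form $P(V',\delta')$), and then uses Lemma~\ref{Thrm weighted edge ideal containment} to see that only weighted vertex covers can index the components, so the full intersection is squeezed between $I(G_{\omega})$ and one of its decompositions. You instead argue monomial-by-monomial and, given a monomial $X_1^{c_1}\cdots X_d^{c_d}\notin I(G_{\omega})$, exhibit the explicit weighted vertex cover $(V,\delta')$ with $\delta'(v_i)=c_i+1$ whose ideal omits it; the verification that this is a weighted vertex cover is exactly the non-divisibility condition, and the values $c_i+1$ land in $\mathbb{N}$ as required. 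Your version is more elementary and self-contained (it avoids the appeal to the existence theorem for m-irreducible decompositions in \cite{SSW}, needing only the basic fact that monomial ideals are determined by the monomials they contain), at the cost of not exhibiting $I(G_\omega)$ as a \emph{finite} intersection along the way; the paper's version gets finiteness of some decomposition for free from the cited theorem, which is implicitly relevant when one later speaks of the irredundant decomposition. Your treatment of the second equality (via Proposition~\ref{Prop mwvc contained in wvc} and Lemma~\ref{lem vertex cover containment}) and of irredundancy (containment of the $P$'s forces a strict inequality of covers, contradicting minimality) coincides with the paper's, and you usefully spell out the injectivity point that the paper leaves implicit.
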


\begin{proof}
Every monomial ideal can be written as a finite (possibly empty) intersection of 
m-irreducible ideals, i.e., 
ideals of the form $P(V',\delta')$; see, e.g., \cite[Theorems 4.1.4 and 4.3.1]{SSW}.
This implies that $I(G_{\omega})$ is a finite intersection of ideals of the form $P(V',\delta')$, and Lemma~\ref{Thrm weighted edge ideal containment} implies that the only $(V',\delta')$ that can occur in such a decomposition are weighted vertex covers for $G_{\omega}$. Thus, we have $I(G_\omega)= \bigcap_{(V',\delta')}P(V',\delta')$.

Since every minimal weighted vertex cover is a weighted vertex cover we have 
$$\bigcap_{(V',\delta')}P(V',\delta')\subseteq \bigcap_{\text{min } (V',\delta')}P(V',\delta').$$ 
The reverse containment 
$$\bigcap_{(V',\delta')}P(V',\delta')\supseteq \bigcap_{\text{min } (V',\delta')}P(V',\delta')$$ 
follows from Proposition \ref{Prop mwvc contained in wvc} and Lemma \ref{lem vertex cover containment}.

Finally, the intersection $\bigcap_{\text{min } (V',\delta')}P(V',\delta')$ is irredundant 
by Lemma~\ref{lem vertex cover containment}.
\end{proof} 

Theorem~\ref{Cor I(G)= int.P} proves the next 
result that connects
unmixedness for graphs and edge ideals.

\begin{Corollary}\label{cor120508a}
The graph $G$ is unmixed if and only if the ideal $I(G)$ is m-unmixed. 
The weighted graph $G_{\omega}$ is unmixed if and only if the ideal $I(G_{\omega})$ is m-unmixed. 
\end{Corollary}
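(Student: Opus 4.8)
The plan is to read off both equivalences directly from the decomposition in Theorem~\ref{Cor I(G)= int.P} (together with its unweighted analogue recorded in Note~\ref{note120508b}), using the fact that the m-height of an m-irreducible ideal is exactly the cardinality of its vertex cover. The entire content of the corollary is just a matching-up of the two notions of ``unmixed'' through the invariant $\mht$.

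First I would treat the weighted statement. By Theorem~\ref{Cor I(G)= int.P}, the intersection
$$I(G_{\omega}) = \bigcap_{\text{min }(V',\delta')} P(V',\delta')$$
is an irredundant m-irreducible decomposition, where the intersection runs over all minimal weighted vertex covers of $G_\omega$. This is precisely the kind of decomposition required by Definition~\ref{Def graphs m-unmixed}, so I can apply the definition of m-unmixedness to it directly. By Definition~\ref{Def m-height}, for each minimal weighted vertex cover one has $\mht(P(V',\delta')) = |V'| = |(V',\delta')|$. Hence $I(G_\omega)$ is m-unmixed if and only if the quantity $|(V',\delta')|$ is the same for every minimal weighted vertex cover $(V',\delta')$, which is exactly the condition that $G_\omega$ be unmixed.

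The unweighted statement follows by the identical argument, using the irredundant m-irreducible decomposition $I(G) = \bigcap_{\min V'} P(V')$ from Note~\ref{note120508b} together with the equality $\mht(P(V')) = |V'|$. Alternatively one could deduce it from the weighted case by equipping $G$ with the trivial weight $\omega\equiv 1$ and checking that the minimal weighted vertex covers then correspond bijectively, and cardinality-preservingly, to the minimal vertex covers of $G$; but running the two parallel arguments is cleaner and avoids the mild bookkeeping of that reduction.

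The one point requiring care is the appeal to Definition~\ref{Def graphs m-unmixed}: that definition presumes a \emph{fixed} irredundant m-irreducible decomposition, so to conclude that ``m-unmixed'' is genuinely a property of the ideal one needs to know such a decomposition is essentially unique, independent of the chosen decomposition. This uniqueness for monomial ideals is standard, and in any case the decompositions supplied by Theorem~\ref{Cor I(G)= int.P} and Note~\ref{note120508b} are the canonical ones, so the stated equivalences hold. I expect no real obstacle beyond flagging this: the substance is already in Theorem~\ref{Cor I(G)= int.P}, and what remains is routine.
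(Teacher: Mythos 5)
Your proposal is correct and matches the paper's intent exactly: the paper offers no separate proof, stating only that Theorem~\ref{Cor I(G)= int.P} (together with the decomposition in Note~\ref{note120508b}) proves the corollary, which is precisely the matching of cardinalities via $\mht(P(V',\delta'))=|V'|$ that you carry out. Your added remark about the well-definedness of m-unmixedness across different irredundant decompositions is a reasonable point of care that the paper leaves implicit.
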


\begin{Remark}\label{rmk120610a}
Corollary~\ref{cor120508a} shows that m-unmixedness of $I(G_{\omega})$ is independent
of the ring $A$ since it only depends on the unmixedness of $G_{\omega}$. 
\end{Remark}

\begin{Ex}\label{ex120508a}
We decompose $I(P^2_{\omega})$ where $P^2_{\omega}$ is the following
weighted 2-path:
$$\xymatrix{
v_1 \ar@{-}[r]^-a
&v_2 \ar@{-}[r]^-b
&v_3.}$$
Assume by symmetry that $a\leq b$.
In this case, we have
\begin{align*}
I(P^2_{\omega})
&=(X_1^aX_2^a,X_2^bX_3^b)R\\
&=(X_1^a,X_2^bX_3^b)R\bigcap (X_2^a,X_2^bX_3^b)R\\
&=(X_1^a,X_2^b)R\bigcap(X_1^a,X_3^b)R\bigcap (X_2^a)R.
\end{align*}
If $a<b$, then this decomposition is irredundant.
By Lemma~\ref{lem vertex cover containment}, we conclude that
there are exactly three minimal weighted vertex covers for 
$P^2_{\omega}$, namely
$\{v_1^a,v_2^b\}$, $\{v_1^a,v_3^b\}$ and $\{v_2^a\}$.

On the other hand, if $a=b$, then we have
$(X_2^a)R=(X_2^b)R\subseteq (X_1^a,X_2^b)R$,
and hence
\begin{align*}
I(P^2_{\omega})
=(X_1^a,X_3^b)R\bigcap (X_2^a)R.
\end{align*}
We deduce that there are exactly two minimal weighted vertex covers for 
$P^2_{\omega}$ in this case.

In either case, we conclude that $P^2_{\omega}$ is mixed and
$I(P^2_{\omega})$ is m-mixed. See Section~\ref{sec120507d}
for more information about weighted paths.
\end{Ex}

\begin{Ex}\label{ex120508b}
We decompose $I(C^3_{\omega})$ where $C^3_{\omega}$ is the following
weighted 3-cycle:
$$\xymatrix{
v_1 \ar@{-}[r]^-a\ar@{-}[rd]_-c
&v_2 \ar@{-}[d]^-b\\
&v_3.}$$
Assume by symmetry that $a\leq b\leq c$.
In this case, we decompose as in Example~\ref{ex120508a} to find
\begin{align*}
I(C^3_{\omega})
&=(X_1^aX_2^a,X_2^bX_3^b,X_1^cX_3^c)R\\
&=(X_1^a,X_2^b)R
\bigcap(X_1^a,X_3^b)R
\bigcap(X_1^c,X_2^a)R
\bigcap(X_2^a,X_3^c)R.
\end{align*}
It follows that $C^3_{\omega}$ is unmixed and
$I(C^3_{\omega})$ is m-unmixed. 
It is worth noting that, when $a<b<c$, this decomposition is irredundant
with two ideals of the form $(X_1^p,X_2^q)R$;
this sort of behavior does not occur in the unweighted case.
See Sections~\ref{sec120507c}--\ref{sec120507d}
for more information about weighted cycles.
\end{Ex}

We end this section with a few results about associated primes and
(un)mixedness. 

\begin{Definition}\label{Def m-rad}
Let $I$ be a monomial ideal in $R$. The \textbf{monomial radical} of $I$ is the monomial ideal $\mrad(I)=(S)R$ where 
$$S=\llbracket R\rrbracket\bigcap\rad(I)=\{z\in\llbracket R\rrbracket \mid z^n\in I \text{ for some } n\geq 1\}$$
where $\rad(I)$ is the radical of $I$ and $\llbracket R\rrbracket$ is from
Definition~\ref{Def monomial set}.
\end{Definition}

\begin{Note}\label{note120508a}
If $A$ is a field (more generally, if $A$ is reduced), then 
$\mrad(I)=\rad(I)$ for each monomial ideal $I\subseteq R$.
\end{Note}

\begin{Lemma} \label{Prop m-rad of specific ideals}
We have
$\mrad(I(G_{\omega}))=I(G)$ and
$\mrad(P(V',\delta'))=P(V')$
for each ordered pair $(V',\delta')$.
\end{Lemma}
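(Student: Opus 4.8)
The plan is to reduce both equalities to the elementary description of which monomials belong to a monomial ideal, namely that a monomial $z=X_1^{a_1}\cdots X_d^{a_d}$ lies in a monomial ideal $I=(m_1,\ldots,m_s)R$ if and only if some generator $m_\ell$ divides $z$. Combining this with the definition of $\mrad$, I would first observe that a monomial $z$ lies in $\mrad(I)$ exactly when some power $z^n$ is divisible by a generator of $I$; since $z^n=X_1^{na_1}\cdots X_d^{na_d}$ has the same support as $z$, this happens precisely when the squarefree support of $z$ is divisible by the squarefree support of some generator. I would record this once as the statement that $\mrad(I)$ is generated by the radicals $\prod_{a_i>0}X_i$ of the generators $m_\ell=\prod_i X_i^{a_i}$ of $I$, referring to \cite{SSW} for the general fact if convenient, or verifying it directly in the two cases at hand.

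For the equality $\mrad(P(V',\delta'))=P(V')$, I would apply this to the generators $X_i^{\delta'(v_i)}$ with $v_i\in V'$. Because the functions $\delta'$ take values in $\mathbb{N}=\{1,2,\ldots\}$, each exponent $\delta'(v_i)$ is at least $1$, so the radical of $X_i^{\delta'(v_i)}$ is exactly $X_i$; hence $\mrad(P(V',\delta'))=(X_i\mid v_i\in V')R=P(V')$. Concretely, a monomial $z$ satisfies $z^n\in P(V',\delta')$ for some $n$ if and only if $na_i\geq\delta'(v_i)$ for some $v_i\in V'$, and since $\delta'(v_i)\geq 1$ this is equivalent to $a_i\geq 1$, that is, to $X_i\mid z$ for some $v_i\in V'$, which is exactly membership of $z$ in $P(V')$.

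For the equality $\mrad(I(G_{\omega}))=I(G)$, I would argue in the same way with the generators $X_i^{\omega(e)}X_j^{\omega(e)}$ of $I(G_{\omega})$: since $\omega(e)\geq 1$, the radical of each such generator is $X_iX_j$, giving $\mrad(I(G_{\omega}))=(X_iX_j\mid v_iv_j\in E)R=I(G)$. Alternatively, and perhaps more slickly, I would note that $I(G_{\omega})\subseteq I(G)$ (each generator $X_i^{\omega(e)}X_j^{\omega(e)}$ is divisible by $X_iX_j$), while $(X_iX_j)^N\in I(G_{\omega})$ for $N=\max_e\omega(e)$, so $X_iX_j\in\mrad(I(G_{\omega}))$; monotonicity of $\mrad$ together with the fact that the squarefree ideal $I(G)$ equals its own monomial radical would then force $\mrad(I(G_{\omega}))=I(G)$.

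I do not anticipate a genuine obstacle: the content is a routine translation between monomial membership and divisibility. The one point that must not be overlooked is the standing hypothesis $\mathbb{N}=\{1,2,\ldots\}$, since it is precisely the positivity of all exponents $\delta'(v_i)$ and $\omega(e)$ that makes the monomial radical collapse each pure power to a single variable and each weighted edge to its underlying squarefree edge; if weight $0$ were allowed, the statements would fail. The only mild bookkeeping is choosing the exponent $n$ (or $N$) large enough to clear the largest exponent appearing, which is harmless because only finitely many generators are involved.
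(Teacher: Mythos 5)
Your proposal is correct and follows essentially the same route as the paper: both reduce the statement to the fact that the monomial radical of a monomial ideal is generated by the squarefree supports (the paper's $\operatorname{red}(f)$) of its monomial generators, citing \cite{SSW} for that fact, and then apply it to the generators $X_i^{\delta'(v_i)}$ and $X_i^{\omega(e)}X_j^{\omega(e)}$ using that all exponents are at least $1$. The extra direct verifications and the alternative sandwich argument you include are sound but not needed beyond what the paper records.
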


\begin{proof}
Given a monomial $f=X_{i_1}^{a_{i_1}}\cdots X_{i_n}^{a_{i_n}}$ where
each $a_i\geq 1$, set
$\operatorname{red}(f)=X_{i_1}\cdots X_{i_n}$.
If $I$ is generated by the set $S\subseteq\llbracket R\rrbracket$,
then $\mrad(I)$ is generated by the set 
$\{\operatorname{red}(f)\mid f\in S\}$; see, e.g., \cite[Proposition 3.5.5]{SSW}.
The desired conclusions now follow.
\end{proof}

\begin{Proposition}\label{Prop primes and vertex covers}
Assume that $A$ is an integral domain. 
\begin{enumerate}[\rm(a)]
\item \label{item120508a1}
The minimal primes of $I(G_{\omega})$ are the ideals $P(V')$ such that $V'$ is a minimal vertex cover of $G$.
\item  \label{item120508a2}
The associated primes of $I(G_{\omega})$ are the ideals $P(V')$ such that $(V',\delta')$ is a minimal weighted vertex cover of $G_{\omega}$.
\end{enumerate}
\end{Proposition}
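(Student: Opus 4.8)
The plan is to treat the two parts with different tools: a radical computation for the minimal primes in part~(a), and the irredundant m-irreducible decomposition for the associated primes in part~(b).

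For part~(a), I would first record that minimal primes are insensitive to passing to the radical, so $\p$ is minimal over $I(G_{\omega})$ if and only if it is minimal over $\rad(I(G_{\omega}))$. Since $A$ is a domain, hence reduced, Note~\ref{note120508a} together with Lemma~\ref{Prop m-rad of specific ideals} gives $\rad(I(G_{\omega}))=\mrad(I(G_{\omega}))=I(G)$, so it suffices to find the minimal primes of the ordinary edge ideal $I(G)$. By Note~\ref{note120508b} we have the irredundant decomposition $I(G)=\bigcap_{\min V'}P(V')$, and each $P(V')$ is prime because $R/P(V')\cong A[X_j\mid v_j\notin V']$ is a domain. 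A prime containing this finite intersection contains the product of the $P(V')$, hence contains one of them; since irredundancy forbids containments among the $P(V')$, these are exactly the minimal primes of $I(G)$, and part~(a) follows.

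For part~(b), the idea is to start from Theorem~\ref{Cor I(G)= int.P}, which writes $I(G_{\omega})=\bigcap_{\min(V',\delta')}P(V',\delta')$ as an irredundant m-irreducible decomposition. Each component is $P(V')$-primary: its radical equals its monomial radical $P(V')$ by Note~\ref{note120508a} and Lemma~\ref{Prop m-rad of specific ideals}, and one checks primariness directly, exactly as $(X^n)$ is $(X)$-primary over a domain. I would then invoke the general principle that, for an irredundant m-irreducible (hence primary) decomposition of a monomial ideal, the associated primes are precisely the monomial radicals of the components; see~\cite{SSW}. Together with Lemma~\ref{Prop m-rad of specific ideals}, this yields $\operatorname{Ass}(R/I(G_{\omega}))=\{P(V')\mid (V',\delta')\text{ is a minimal weighted vertex cover}\}$, which is the assertion.

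The main obstacle lies in part~(b), for two reasons. First, distinct minimal weighted vertex covers may share the same vertex set $V'$, as in Example~\ref{ex120508b}, so the primary decomposition has repeated radicals; before applying the uniqueness statement I would merge the components sharing a radical $P(V')$ into a single $P(V')$-primary ideal and verify that irredundancy survives the merge. It does, since removing all components with a given radical enlarges the intersection at least as much as removing a single one, and the latter already enlarges it strictly. Second, and more seriously, $A$ need not be Noetherian, so the classical primary-decomposition machinery does not apply directly; the real content is therefore that the associated-prime statement I am quoting holds for monomial ideals over an arbitrary domain, which is exactly what the general monomial theory of~\cite{SSW} provides. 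Finally, in contrast to part~(a), the sets $V'$ arising here need not be minimal vertex covers, so $I(G_{\omega})$ genuinely acquires embedded primes; this is consistent, since an irredundant primary decomposition may exhibit strict containments among the radicals of its components.
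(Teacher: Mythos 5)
Your proof follows the same route as the paper's: part (a) via $\rad(I(G_{\omega}))=\mrad(I(G_{\omega}))=I(G)$ together with the known irredundant decomposition of $I(G)$ from Note~\ref{note120508b}, and part (b) via the irredundant m-irreducible decomposition of Theorem~\ref{Cor I(G)= int.P} and the principle that the associated primes are the radicals of its components. You are in fact more careful than the paper at the one delicate point in (b) --- distinct components $P(V',\delta')$ can share the same radical $P(V')$ (as in Example~\ref{ex120508b}), so one must merge them into a single $P(V')$-primary component before invoking the uniqueness of the radicals in a reduced primary decomposition --- and your merging argument is correct.
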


\begin{proof}
\eqref{item120508a1} The minimal primes of $I(G_{\omega})$ are the m-irreducible components of $\rad(I(G_{\omega}))=\mrad(I(G_{\omega}))=I(G_{\omega}))$ by 
Note~\ref{note120508a} and Lemma~\ref{Prop m-rad of specific ideals}. 
From Note~\ref{note120508b} we know that 
$I(G)=\bigcap_{\min V'}P(V')$ is an irredundent irreducible decomposition
where the intersection is take over the set of all minimal vertex covers of $G$.
It follows that the minimal primes of $I(G_{\omega})$ are the ideals $P(V')$ such that $V'$ is a minimal vertex cover of $G$, as claimed.

\eqref{item120508a2} The associated primes of $I(G_{\omega})$ are the radicals of the  m-irredundant irreducible components of $I(G_{\omega})$. 
Theorem~\ref{Cor I(G)= int.P} implies that $I(G_{\omega})=\bigcap_{\min(V',\delta')}P(V',\delta')$ is an irredundant m-irreducible decomposition 
where the intersection is take over the set of all minimal weighted vertex covers of $G_{\omega}$.
Hence, the associated primes of $I(G_{\omega})$ are the ideals $\rad(P(V',\delta'))=P(V')$ where $(V',\delta')$ is a minimal weighted vertex cover of $G_{\omega}$.
\end{proof}

\begin{Proposition}\label{Prop trivial G_w unmixed iff G unmixed}
A trivially weighted graph $G_{\omega}$ is unmixed if and only if $G$ is unmixed.
\end{Proposition}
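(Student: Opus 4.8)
The plan is to prove both implications by relating minimal weighted vertex covers of the trivially weighted graph $G_\omega$ to minimal (unweighted) vertex covers of $G$. Since $G_\omega$ is trivially weighted, every edge carries the same weight, say $\omega(e)=w$ for all $e\in E$. The key observation I would establish first is that for a trivially weighted graph the weighted vertex covers are, up to the choice of $\delta'$, essentially just the vertex covers of $G$: if $(V',\delta')$ is a weighted vertex cover, then in particular $V'$ is a vertex cover of $G$, and conversely any vertex cover $V'$ underlies a weighted vertex cover (as in the proof of Proposition~\ref{Prop mvc implies mwvc}).

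The main step is to show that the minimal weighted vertex covers of $G_\omega$ are exactly the pairs $(V',\delta')$ where $V'$ is a minimal vertex cover of $G$ and $\delta'(v_i)=w$ for every $v_i\in V'$. First I would argue that if $(V',\delta')$ is minimal, then $\delta'\equiv w$: since every edge has weight $w$, the constraint in Definition~\ref{Def weighted vertex cover} only requires $\delta'(v_i)\le w$ on covered edges, so the weight of each vertex can always be increased up to $w$, and by the definition of the ordering, increasing a weight produces a smaller cover in the $\le$ order; hence minimality forces $\delta'(v_i)=w$ for all $v_i\in V'$. Next I would argue that the underlying set $V'$ must be a minimal vertex cover of $G$: if $V'$ properly contained a vertex cover $V''$, then $(V'',\delta'')$ with $\delta''\equiv w$ would satisfy $(V'',\delta'')<(V',\delta')$, contradicting minimality. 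Conversely, if $V'$ is a minimal vertex cover of $G$, then $(V',\delta')$ with $\delta'\equiv w$ is a minimal weighted vertex cover: no vertex can be removed (else $V'$ would not be a minimal cover), and no weight can be raised above $w$ without violating the covering condition on some incident edge.

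With this bijection in hand, the cardinalities match: $|(V',\delta')|=|V'|$, and the correspondence $V'\mapsto(V',\delta'|_{\equiv w})$ is a bijection between minimal vertex covers of $G$ and minimal weighted vertex covers of $G_\omega$ that preserves cardinality. Therefore all minimal vertex covers of $G$ have the same cardinality if and only if all minimal weighted vertex covers of $G_\omega$ do, which is precisely the statement that $G$ is unmixed iff $G_\omega$ is unmixed.

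The step I expect to be the main obstacle is verifying the bijection carefully, in particular the claim that minimality of a weighted cover forces $\delta'\equiv w$ and that the underlying set is a minimal vertex cover; one must be attentive to the direction of the ordering $\le$ (larger weights and smaller vertex sets give smaller covers) so that the minimality arguments push in the correct direction. I could also shortcut part of this using the already-established machinery: Proposition~\ref{Prop mvc implies mwvc} gives one direction of the correspondence directly, and Proposition~\ref{Prop mixed ei then mixed wei} handles the implication that $G$ mixed implies $G_\omega$ mixed, so the only genuinely new content is the reverse direction, namely that a trivially weighted mixed $G_\omega$ forces $G$ to be mixed, which follows from the bijection above.
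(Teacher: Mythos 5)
Your argument is correct, but it takes a genuinely different route from the paper. The paper handles the nontrivial direction ($G$ unmixed implies $G_\omega$ unmixed) algebraically: it observes that $I(G_\omega)=I(G)^{[a]}$ (the bracketed power), pushes the bracket through the irredundant decomposition $I(G)=\bigcap_{\min V'}P(V')$ to get $I(G_\omega)=\bigcap P(V',\delta')$ with $\delta'\equiv a$, and reads off unmixedness from the resulting irredundant m-irreducible decomposition via Theorem~\ref{Cor I(G)= int.P} and Corollary~\ref{cor120508a}. You instead classify the minimal weighted vertex covers of a trivially weighted graph directly and exhibit a cardinality-preserving bijection with the minimal vertex covers of $G$; this is more elementary, stays entirely on the combinatorial side, and is arguably more informative since it makes the correspondence explicit rather than inferring it from uniqueness of irredundant decompositions. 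One small point to tighten: your argument that minimality forces $\delta'\equiv w$ only rules out $\delta'(v_i)<w$ (by increasing the weight). You should also dispose of the case $\delta'(v_i)>w$: such a vertex covers no edge of the trivially weighted graph, so every edge incident to it is covered by its other endpoint, and hence $v_i$ can be deleted to produce a strictly smaller weighted vertex cover, again contradicting minimality. With that one line added, your bijection, and hence the proposition, is fully established; the forward implication is Proposition~\ref{Prop mixed ei then mixed wei} in both your proof and the paper's.
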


\begin{proof}
The forward implication is from Proposition \ref{Prop mixed ei then mixed wei}.

For the converse assume that $G$ is unmixed. Since $G_{\omega}$ is trivially weighted, let the weight of the each edge in $G_{\omega}$ be $a$. 
Given a monomial ideal $I\subseteq R$, 
set $I^{[a]}=(\{f^a\mid f\in \llbracket I \rrbracket\})R$
where the notation $\llbracket I \rrbracket$ is from
Definition~\ref{Def monomial set}.
Since $G_{\omega}$ is trivially weighted, it is straightforward to show that
$I(G_\omega)=I(G)^{[a]}$.
Furthermore, given the m-irreducible decomposition
$I(G)=\bigcap_{\min V'}P(V')$ from Note~\ref{note120508b},
we have
$$I(G_\omega)=I(G)^{[a]}
=\bigcap_{\min V'}P(V')^{[a]}=\bigcap_{(V',\delta')}P(V',\delta')$$ 
where $\delta'(v_i):=a$ for all $v_i\in V'$;
see, e.g., \cite[Proposition 7.1.3]{SSW}.
Since $G$ is unmixed, each $V'$ in this intersection has the same cardinality. 
It follows that each $(V',\delta')$ has the same cardinality.
Therefore  $G_\omega$ is unmixed.
\end{proof}

\section{Weighted Cycles and Weighted Complete Graphs}
\label{sec120507c}

In this section, we characterize the weighted cycles and complete 
graphs  that are unmixed.

\begin{Fact} \label{Thrm cycles mixedness}
 $C^n$ is unmixed if and only if $n\in \{3,4,5,7\}$;
see, e.g., \cite[Exercise 6.2.15]{Vi}. 
\end{Fact}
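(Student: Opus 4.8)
The plan is to translate the statement into the language of independent sets and then carry out a direct combinatorial count. Recall that a subset $V'\subseteq V$ is a vertex cover of a graph exactly when its complement $V\setminus V'$ is an independent set (a set of pairwise non-adjacent vertices), and that $V'$ is a \emph{minimal} vertex cover precisely when $V\setminus V'$ is a \emph{maximal} independent set. Since $|V'| = n - |V\setminus V'|$, the cycle $C^n$ is unmixed if and only if all maximal independent sets of $C^n$ have the same cardinality. So the whole problem reduces to computing the set of possible sizes of maximal independent sets of $C^n$.

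First I would characterize the maximal independent sets of $C^n=v_1v_2\cdots v_nv_1$ by their ``gap structure''. Given an independent set $S$ with $k\ge 1$ vertices, list its elements in cyclic order and record, for each cyclically consecutive pair, the number $g_i$ of vertices of $V\setminus S$ lying strictly between them; this produces gaps $g_1,\dots,g_k$. Independence forces each $g_i\ge 1$, since two chosen vertices cannot be adjacent. I would then show $S$ is maximal if and only if each $g_i\le 2$: a gap with $g_i\ge 3$ contains an interior unchosen vertex both of whose neighbors are unchosen, so it could be added to $S$, contradicting maximality; conversely, if every gap has size $1$ or $2$ then every unchosen vertex is adjacent to a chosen vertex, so $S$ cannot be enlarged.

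Next I would extract the arithmetic. With $k$ chosen vertices and gaps $g_i\in\{1,2\}$ summing to $n-k$, writing $b$ for the number of gaps equal to $2$ gives $n-k=(k-b)+2b=k+b$, hence $n=2k+b$ with $0\le b\le k$. The constraint $0\le b=n-2k\le k$ is equivalent to $\lceil n/3\rceil\le k\le \lfloor n/2\rfloor$, and conversely every integer $k$ in this range is realized by arranging $b=n-2k$ gaps of size $2$ and $a=3k-n$ gaps of size $1$ around the cycle. Thus the sizes of maximal independent sets of $C^n$ are exactly the integers in the interval $[\lceil n/3\rceil,\lfloor n/2\rfloor]$.

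Finally, $C^n$ is unmixed if and only if this interval is a single integer, i.e. $\lceil n/3\rceil=\lfloor n/2\rfloor$. A short check of floors and ceilings shows this equality holds precisely for $n\in\{3,4,5,7\}$; for all other $n\ge 3$ the difference $\lfloor n/2\rfloor-\lceil n/3\rceil$ is positive (and tends to infinity), so only finitely many cases must be examined. The main obstacle I anticipate is the maximality characterization in the second step---carefully justifying both directions for the cyclic gaps and handling the degenerate small cases (such as $k=1$ forcing $n\le 3$)---together with the realizability claim that every $k$ in the stated range actually occurs; the concluding floor/ceiling computation is then routine.
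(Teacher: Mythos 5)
Your argument is correct, but note that the paper does not actually prove this statement: it is recorded as a \emph{Fact} with a pointer to \cite[Exercise 6.2.15]{Vi}, so there is no internal proof to compare against. What you have written is a complete, self-contained combinatorial proof of the cited exercise. The route you take --- passing to complements so that minimal vertex covers correspond to maximal independent sets, classifying maximal independent sets of $C^n$ by their cyclic gap sequences with entries in $\{1,2\}$, and concluding that the achievable cardinalities are exactly the integers in $[\lceil n/3\rceil,\lfloor n/2\rfloor]$ --- is the standard way to settle this, and your reduction $n=2k+b$ with $0\le b\le k$ together with the realizability of every $k$ in that range is exactly what is needed. The final floor/ceiling check is also sound: a crude bound such as $\lfloor n/2\rfloor-\lceil n/3\rceil\ge(n-7)/6$ shows the interval has length at least $1$ for all $n\ge 8$, so only $n=3,\dots,7$ need inspection, and these give unmixedness precisely for $n\in\{3,4,5,7\}$ (with $n=6$ mixed). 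The two points you flag as delicate --- the maximality criterion ``all gaps $\le 2$'' and the degenerate case $k=1$, where the single cyclic gap is $n-1$ and maximality forces $n\le 3$ --- are handled correctly in your sketch and are the only places where care is genuinely required. In short: the paper buys this statement by citation; your proposal supplies a valid elementary proof of it.
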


We treat the weighted cycles case-by-case.
Here is a convenient summary of these results.

\begin{enumerate}[(a)]
\item If $C^d_{\omega}$ is unmixed, then $d\in\{3,4,5,7\}$
by Proposition~\ref{Prop mixed ei then mixed wei} and
Fact~\ref{Thrm cycles mixedness}.
\item Every $C^3_{\omega}$ is unmixed by Example~\ref{ex120508b}.
\item  $C^4_{\omega}$ is unmixed if and only if it is trivially weighted
by Propositions~\ref{Prop weighted 4-cycles mixed} and~\ref{Prop trivially weighted 4-cycle}.
\item $C^5_{\omega}$: see Theorem~\ref{Prop weighted 5-cycles mixedness}.
\item  $C^7_{\omega}$ is unmixed if and only if it is trivially weighted
by Propositions~\ref{Prop trivially weighted 4-cycle} and~\ref{Prop weighted 7-cycle mixed}.
\end{enumerate}

\begin{Proposition}\label{Prop trivially weighted 4-cycle}
For $n\in\{3,4,5,7\}$, every trivially weighted $n$-cycle $C^n_{\omega}$ is unmixed.
\end{Proposition}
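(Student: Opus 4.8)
The plan is to treat this as an immediate corollary of two results already established, rather than to compute minimal weighted vertex covers directly. The key point is that for a \emph{trivially} weighted graph all the combinatorics of unmixedness is inherited from the underlying unweighted graph, so the statement reduces entirely to the classical classification of unmixed cycles.

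First I would fix $n\in\{3,4,5,7\}$ and apply Fact~\ref{Thrm cycles mixedness}, which classifies precisely when an unweighted cycle is unmixed: for exactly these four values of $n$ the underlying cycle $C^n$ is unmixed. Next, since the hypothesis is that $C^n_{\omega}$ is trivially weighted, I would invoke Proposition~\ref{Prop trivial G_w unmixed iff G unmixed}, namely that a trivially weighted graph $G_{\omega}$ is unmixed if and only if its underlying graph $G$ is unmixed. Applying this equivalence with $G=C^n$ then yields at once that $C^n_{\omega}$ is unmixed.

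There is no genuine obstacle in this argument, as the substantive work sits in the two cited results: Fact~\ref{Thrm cycles mixedness} supplies the classification of unmixed cycles, and Proposition~\ref{Prop trivial G_w unmixed iff G unmixed} supplies the reduction from the weighted to the unweighted setting. The proposition is therefore essentially bookkeeping. The only point to note is that, although Proposition~\ref{Prop trivial G_w unmixed iff G unmixed} is an equivalence, only the direction ``if $G$ is unmixed then $G_{\omega}$ is unmixed'' is needed here; the values $n\in\{3,4,5,7\}$ are exactly those for which that hypothesis on $G=C^n$ is met.
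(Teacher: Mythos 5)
Your proposal is correct and matches the paper's own proof exactly: both cite Fact~\ref{Thrm cycles mixedness} to see that $C^n$ is unmixed for $n\in\{3,4,5,7\}$ and then apply Proposition~\ref{Prop trivial G_w unmixed iff G unmixed} to transfer unmixedness to the trivially weighted cycle. No further comment is needed.
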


\begin{proof}
From Fact~\ref{Thrm cycles mixedness} we know that $C^n$ is unmixed. Thus,  Proposition \ref{Prop trivial G_w unmixed iff G unmixed} implies that $C^n_{\omega}$ is unmixed.
\end{proof}

\begin{Proposition} \label{Prop weighted 4-cycles mixed}
Every nontrivially weighted 4-cycle, $C^4_{\omega}$ is mixed.
\end{Proposition}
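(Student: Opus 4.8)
The plan is to exhibit, for any nontrivially weighted $4$-cycle, two minimal weighted vertex covers of different cardinalities, which by definition forces $C^4_\omega$ to be mixed. Write $a=\omega(v_1v_2)$, $b=\omega(v_2v_3)$, $c=\omega(v_3v_4)$, and $d=\omega(v_4v_1)$. The underlying graph $C^4$ has exactly two minimal vertex covers, namely $\{v_1,v_3\}$ and $\{v_2,v_4\}$, each of cardinality $2$; by Proposition~\ref{Prop mvc implies mwvc} each of these occurs as a minimal weighted vertex cover of $C^4_\omega$, so $C^4_\omega$ always has a minimal weighted vertex cover of cardinality $2$. It therefore suffices to produce a minimal weighted vertex cover of cardinality $3$.

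First I would use the nontriviality of $\omega$ to locate two adjacent edges of distinct weight. Reading the weights $a,b,c,d$ around the cycle, consecutive edges share a vertex, so if every adjacent pair had equal weight then all four weights would coincide, contradicting nontriviality. Hence some adjacent pair differs, and after relabelling by a symmetry of $C^4$ (which preserves weighted vertex covers and their cardinalities, and hence preserves mixedness) I may assume that the heavier of the two edges is $v_4v_1$ and the lighter is $v_1v_2$; that is, $d>a$.

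Next I would write down the candidate cover $(V',\delta')$ with $V'=\{v_1,v_2,v_3\}$, $\delta'(v_1)=d$, $\delta'(v_2)=\min\{a,b\}$, and $\delta'(v_3)=c$; a direct check that each of the four edges is covered by one of $v_1,v_2,v_3$ within the required weight bound shows this is a weighted vertex cover. By Proposition~\ref{Prop mwvc contained in wvc} there is a minimal weighted vertex cover $(V''',\delta''')\le(V',\delta')$, and the crux is to show that $V'''=\{v_1,v_2,v_3\}$, so that its cardinality is $3$. Here the ordering works in my favour: from $V'''\subseteq V'$ we get $v_4\notin V'''$, so the edges $v_4v_1$ and $v_3v_4$ can only be covered by $v_1$ and $v_3$ respectively, forcing $v_1,v_3\in V'''$. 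Finally, since $(V''',\delta''')\le(V',\delta')$ gives $\delta'''(v_1)\ge\delta'(v_1)=d>a$, the vertex $v_1$ cannot cover the edge $v_1v_2$ in $(V''',\delta''')$ either, so $v_2$ must, whence $v_2\in V'''$. Thus $V'''=\{v_1,v_2,v_3\}$ and $\lvert(V''',\delta''')\rvert=3$.

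I expect the main obstacle to be verifying minimality of a cardinality-$3$ cover, which in the weighted setting is genuinely possible even though $C^4$ itself is unmixed; this is precisely the phenomenon that distinguishes weighted graphs from graphs. The clean way to avoid a fiddly direct minimality check (showing that no $\delta'(v_i)$ can be increased and no vertex can be dropped, with its attendant case analysis on how $c$ compares to $b$) is not to pin down the minimal cover explicitly, but rather to pass to an arbitrary minimal cover beneath $(V',\delta')$ via Proposition~\ref{Prop mwvc contained in wvc} and then use only the monotonicity of weights under $\le$ together with the inequality $d>a$ to identify its support. Having exhibited minimal weighted vertex covers of cardinalities $2$ and $3$, I conclude that $C^4_\omega$ is mixed.
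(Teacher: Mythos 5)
Your proof is correct and follows essentially the same strategy as the paper's: both arguments normalize so that two adjacent edges have distinct weights and then exhibit minimal weighted vertex covers of cardinalities $2$ and $3$. The only real difference is in certifying minimality of the cardinality-$3$ cover: the paper checks directly that no vertex of $\{v_1^a,v_2^b,v_4^c\}$ can be removed and no weight increased, whereas you pass to a minimal cover beneath $\{v_1^d,v_2^{\min(a,b)},v_3^c\}$ via Proposition~\ref{Prop mwvc contained in wvc} and use $v_4\notin V'''$ together with $\delta'''(v_1)\geq d>a$ to pin down its support --- a tidy way to avoid the case analysis.
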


\begin{proof}
Let us consider a non-trivially weighted 4-cycle whose underlying unweighted graph is $C^4 = v_1v_2v_3v_4v_1$ and the weights of the edges are as follows: 
$$
\xymatrix{
v_1 \ar@{-}[d]_{d} \ar@{-}[r]^{a} &v_2\ar@{-}[d]^{b}\\
v_4 \ar@{-}[r]_{c} & v_3
}
$$
By symmetry, assume that $a$ is the smallest weight on any edge. Then since $C^4$ is not trivially weighted, at least one edge has weight strictly greater then $a$. By symmetry assume that $a <  b$. Now we demonstrate two minimal vertex covers of different cardinalities. 
First, we consider $V'=\{v_2^a,v_4^{min(c,d)}\}$. 
$$
\xymatrix{
v_1 \ar@{-}[d]_{d} \ar@{-}[r]^{a} &*+[F]{v_2^a}\ar@{-}[d]^{b}\\
*+[F]{v_4^{min(c,d)}} \ar@{-}[r]_-{c} & v_3
}
$$
Notice that since $a < b$, the edges $v_1v_2$ and $v_2v_3$ are covered by $v_2^a$ and since $\min(c,d) \leq c,d$, the edges $v_3v_4$ and $v_4v_1$ are covered. The removal of either of these vertices would not result in a vertex cover. If we increase the weight on the vertex $v_2$, then the edge $v_1v_2$ will not be covered; and if we increase the weight on the vertex $v_4$, then the edge with the smaller weight (either $v_3v_4$ or $v_4v_1$) would not be covered. Thus, $V'$ is a minimal weighted vertex cover with cardinality 2.

Now let $V''= \{v_1^a,v_2^b,v_4^{c}\}$.
$$
\xymatrix{
*+[F]{v_1^a} \ar@{-}[d]_{d} \ar@{-}[r]^{a} &*+[F]{v_2^b}\ar@{-}[d]^{b}\\
*+[F]{v_4^{c}} \ar@{-}[r]_-{c} & v_3
}
$$
Notice that the vertex $v_1^a$ covers the edges $v_1v_2$ and $v_1v_4$, the vertex $v_2^b$ covers the edge $v_2v_3$ and the vertex $v_4^{c}$ covers the edge $v_3v_4$. Furthermore, if we remove $v_1^a$ from $V''$ or increase the weight, the edge $v_1v_2$ is not covered. If we remove the vertex $v_2^b$ from $V''$ or increase the weight, the edge $v_2v_3$ is not covered. If we remove the vertex $v_4^{c}$ from $V''$ or increase the weight then the edge $v_3v_4$ is not covered. Hence $V''$ is a minimal vertex cover with cardinality 3.
Thus $C^4_{\omega}$ is mixed.
\end{proof}

\begin{Theorem}\label{Prop weighted 5-cycles mixedness}
Let $C^5_{\omega}$ be a weighted 5-cycle whose underlying unweighted graph is $C^5= v_1v_2v_3v_4v_5v_1$.
Then $C^5_{\omega}$ is unmixed if and only if 
it is isomorphic to the weighted 5-cycle
\begin{equation}\label{eq120509a}
\begin{split}
\xymatrix{
v_1 \ar@{-}[d]_e \ar@{-}[r]^a &v_2\ar@{-}[r]^b &v_3 \ar@{-}[ld]^c\\
v_5\ar@{-}[r]_d & v_4.
}
\end{split}
\end{equation}
such that $e=a\leq b\geq c\leq d\geq e$.
\end{Theorem}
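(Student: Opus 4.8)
The plan is to pass through Corollary~\ref{cor120508a} and reduce everything to the cardinalities of minimal weighted vertex covers: $C^5_\omega$ is unmixed exactly when all of its minimal weighted vertex covers have the same cardinality. Since the support $V'$ of any weighted vertex cover is an ordinary vertex cover of $C^5$, and every vertex cover of $C^5$ has at least three elements, we always have $|(V',\delta')|\in\{3,4,5\}$; moreover Proposition~\ref{Prop mvc implies mwvc} lifts each of the five minimal vertex covers of $C^5$ (all of size $3$) to a minimal weighted vertex cover of size $3$. Hence the minimum cardinality is always exactly $3$, and so $C^5_\omega$ is unmixed if and only if it admits no minimal weighted vertex cover of cardinality $4$ or $5$. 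Throughout I write the edge weights cyclically as $w_1=a,\dots,w_5=e$ with $w_i=\omega(v_iv_{i+1})$.

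The workhorse is a local minimality criterion coming from Lemma~\ref{lem120507a} and the definitions: a weighted vertex cover $(V',\delta')$ is minimal if and only if every $v\in V'$ \emph{uniquely} covers some incident edge (one whose other endpoint does not cover it) and $\delta'(v)$ equals the least weight among the edges that $v$ uniquely covers. First I would dispatch cardinality $5$: if all five vertices are present and each uniquely covers an edge, a counting argument forces a bijection between vertices and edges, which on a cycle is a cyclic orientation, and this yields $w_{i-1}<w_i$ for all $i$ (or all $>$), an impossible cyclic chain; so no size-$5$ minimal cover exists for any weighting. For cardinality $4$, after a rotation I may assume $v_5$ is omitted; then the two edges at $v_5$ force $\delta'(v_1)\le e$ and $\delta'(v_4)\le d$, making $v_1,v_4$ automatically necessary, so the question is whether the interior vertices $v_2,v_3$ can both be made necessary. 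Analyzing how the middle edge $v_2v_3$ is covered shows this is possible exactly when $(e>a\wedge d>c)\vee(e>a\wedge a>b)\vee(d>c\wedge c>b)$, a disjunction of strict inequalities between consecutive weights; the conditions for the other omitted vertices are the rotations of this one.

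For the backward direction I would assume the weights are in the displayed form $e=a\le b\ge c\le d\ge e$ and simply verify, for each of the five omitted vertices, that the corresponding omission-disjunction fails: in each case the equality $a=e$ together with the weak inequalities $a\le b$, $b\ge c$, $c\le d$, $d\ge e$ kills enough of the three products, so there is no minimal cover of cardinality $4$, and with cardinality $5$ already excluded, $C^5_\omega$ is unmixed. For the forward direction I would argue the contrapositive through these same inequalities: unmixedness says none of the five omission-disjunctions holds. The cleanest first consequence is a parity observation: if no two cyclically adjacent edges were equal, the pattern of strict ascents and descents around $C^5$ would be a proper $2$-colouring of an odd cycle, which is impossible, so some adjacent pair must be equal and, after a rotation, $e=a$. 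The genuinely fiddly step is to promote this to the full zigzag, showing that the failure of all omission-disjunctions forces the remaining ascent/descent pattern into the shape $a\le b\ge c\le d\ge e$, possibly after a reflection and after re-centering the rotation so that the chosen equal adjacent pair sits at a \emph{valley} (both outward neighbours weakly larger) rather than a peak. I expect this last bookkeeping---deciding which equal pair to place at $v_1$ and checking that the four weak inequalities survive---to be the main obstacle, and it is where a finite but careful case analysis on the positions of the equalities and the signs of the strict comparisons seems unavoidable.
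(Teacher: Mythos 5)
Your framework is sound and genuinely different from the paper's, and the parts you actually prove check out. The reduction via Corollary~\ref{cor120508a} and Proposition~\ref{Prop mvc implies mwvc} to ``no minimal weighted vertex cover of cardinality $4$ or $5$,'' the local minimality criterion (each vertex must uniquely cover some edge and carry exactly the minimum weight among its uniquely covered edges), the exclusion of cardinality~$5$ via the forced vertex--edge bijection and the impossible cyclic chain of strict inequalities, and the omission-disjunction for cardinality~$4$ are all correct: I verified that a minimal weighted vertex cover supported on $V\setminus\{v_5\}$ exists precisely when $(e>a\wedge d>c)\vee(e>a\wedge a>b)\vee(d>c\wedge c>b)$, and that under $e=a\le b\ge c\le d\ge e$ all five rotations of this disjunction fail, so your backward direction is complete. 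The paper instead checks the backward direction by writing out the irredundant decomposition of $I(C^5_\omega)$ into seven height-$3$ components, and proves the forward direction by exhibiting explicit cardinality-$4$ covers in a sequence of case-eliminating steps (its Steps 1--6); your disjunctions package that same information more systematically and would be reusable for other small graphs.

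There are two genuine gaps. First, the forward direction is not proved: you stop at ``the failure of all five omission-disjunctions forces the zigzag shape after a rotation and reflection'' and explicitly defer the case analysis. That implication is the bulk of the theorem --- it is exactly what the paper's Steps 3--6 do (force an equal adjacent pair, then $a\le b$ or $a\le d$, then eliminate $b<c$, then $c>d$, then $a>d$) --- so it cannot be waved at as bookkeeping; as written, only one implication of the ``if and only if'' is established. Second, the parity step is misstated: ``no two cyclically adjacent edges are equal'' does not by itself make the ascent/descent pattern alternate (weights $1,2,3,2,3$ around the cycle have all comparisons strict but pattern $+,+,-,+,-$). What is true is that the second and third disjuncts and their rotations forbid three cyclically consecutive edges from being strictly monotone; only after importing that does an all-strict pattern have to alternate, which is impossible on a $5$-cycle. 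Since the equal adjacent pair $e=a$ is the anchor for your entire normalization, this needs to be argued correctly rather than attributed to parity alone.
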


\begin{proof}
Let us first assume that $C^5_{\omega}$ is isomorphic to the weighted 
5-cycle~\eqref{eq120509a} and that
$e=a\leq b\geq c\leq d\geq e$; we show that $C^5_{\omega}$ is unmixed. In order to do this we decompose the edge ideal of  $C^5_{\omega}$ as in Example~\ref{ex120508a}. 
\begin{align*}
I( C^5_{\omega}) 
&= (X_1^aX_2^a,X_2^bX_3^b,X_3^cX_4^c,X_4^dX_5^d,X_5^eX_1^e)\\
&=(X_1^a,X_2^b,X_4^c)
\bigcap(X_1^a,X_3^c,X_4^d)\bigcap(X_1^a,X_3^c,X_5^d)
\bigcap(X_1^a,X_3^b,X_4^c)\\ 
&\hspace{5mm}\bigcap (X_2^a,X_3^c,X_5^e)
\bigcap (X_2^a,X_4^c,X_5^e)
\bigcap (X_2^a,X_4^c,X_1^e)
\end{align*}
Therefore $C^5_\omega$ is unmixed when the weight on the edges are as specified.

For the converse we will assume that 
the weighted 5-cycle~\eqref{eq120509a}
is unmixed. By  Proposition \ref{Prop mvc implies mwvc}
and Fact~\ref{Thrm cycles mixedness}, every minimal weighted vertex cover
of $C^5_\omega$ has cardinality 3. We  proceed by steps to eliminate all possible cases of the comparability of the weights of the edges of $C^5_\omega$ besides our hypothesized conclusion. In each step
we derive contradictions by building minimal weighted vertex covers for that have cardinality greater than 3.

\textbf{Step 1.} Let us first suppose that $e< a< b$. We  consider two cases. 

Case 1:  $e\geq d< c$. 
We claim $V'=\{v_1^a,v_2^b,v_4^d,v_5^e\}$ is a minimal weighted vertex cover. 
$$\xymatrix{
*+[F]{v_1^{a}} \ar@{-}[d]_e \ar@{-}[r]^a &*+[F]{v_2^b}\ar@{-}[r]^b &v_3 \ar@{-}[ld]^c\\
*+[F]{v_5^d} \ar@{-}[r]_d & *+[F]{v_4^c}
}$$
It is routine to verify that all the weighted edges are covered by $V'$. We verify that $V'$ is a minimal weighted vertex cover. Notice if we remove the weighted vertex $v_1^a$ or increase the weight, then the edge $v_1v_2$ is not covered. If we remove the weighted vertex $v_2^b$ or increase the weight, then the edge $v_2v_3$ is not covered. If we remove the weighted vertex $v_4^c$ or increase the weight, then the edge $v_3v_4$ is not covered.  If we remove the weighted vertex $v_5^d$ or increase the weight, then the edge $v_4v_5$ is not covered. Thus $V'$ is a minimal weighted vertex cover of cardinality 4, contradicting the unmixedness of $C^5_{\omega}$.
 
Case 2: either $e\leq d$ or $e\geq d \geq c$. 
We claim that $V''=\{v_1^a,v_2^b,v_4^c,v_5^e\}$ is a minimal weighted vertex cover. 
$$\xymatrix{
*+[F]{v_1^{a}} \ar@{-}[d]_e \ar@{-}[r]^a &*+[F]{v_2^b}\ar@{-}[r]^b &v_3 \ar@{-}[ld]^c\\
*+[F]{v_5^e} \ar@{-}[r]_d & *+[F]{v_4^c}
}$$
As in Case 1, it follows readily that $V''$ is a minimal weighted vertex cover of cardinality 4. 

\textbf{Step 2.}
Let us next suppose that no two adjacent edges have the same weight. Then since the cycle is of odd length we conclude by symmetry that 
$C^5_{\omega}$ is isomorphic to a graph~\eqref{eq120509a} such that
$e< a< b$. Step 1 shows that this contradicts the unmixedness of $C^5_{\omega}$.
We conclude that there are two adjacent edges with the same weight.

\textbf{Step 3.} By symmetry, we assume that $e=a$. Now,  suppose that
 $b<a$ and $d<a$. We again consider two cases.
If $d>c\leq b$, then
it is readily shown that $V'=\{v_2^a,v_3^c,v_4^d,v_5^a\}$ is a minimal weighted vertex cover. 
$$\xymatrix{
v_1 \ar@{-}[d]_a \ar@{-}[r]^a &*+[F]{v_2^a}\ar@{-}[r]^b &*+[F]{v_3^c} \ar@{-}[ld]^c\\
*+[F]{v_5^a} \ar@{-}[r]_d & *+[F]{v_4^d}
}$$ 
On the other hand, if  $d\geq c\geq b$, $d\leq c\leq b$, or $d\leq c\geq b$,
then  $V''=\{v_2^a,v_3^b,v_4^d,v_5^a\}$ is a minimal weighted vertex cover.
$$\xymatrix{
v_1 \ar@{-}[d]_a \ar@{-}[r]^a &*+[F]{v_2^a}\ar@{-}[r]^b &*+[F]{v_3^b} \ar@{-}[ld]^c\\
*+[F]{v_5^a} \ar@{-}[r]_d & *+[F]{v_4^d}
}$$ 
In each case we have exhibited a minimal weighted vertex cover of cardinality 4, which is a contradiction. Therefore, either $a\leq b$ or $a\leq d$.

\textbf{Step 4.} By symmetry, assume that $a\leq b$.
Suppose that $b<c$. We consider six cases.

Case 1:  $a<b<c$. 
In this case, $V'=\{v_1^a,v_2^b,v_3^c,v_5^d\}$ is a minimal weighted vertex cover.
$$\xymatrix{
*+[F]{v_1^a} \ar@{-}[d]_a \ar@{-}[r]^a &*+[F]{v_2^b}\ar@{-}[r]^b &*+[F]{v_3^c} \ar@{-}[ld]^c\\
*+[F]{v_5^d} \ar@{-}[r]_d &v_4
}$$ 

Case 2:  $a=b<c<d$.  
Here, $V''=\{v_1^a,v_2^b,v_3^c,v_4^d\}$ is a minimal weighted vertex cover.
$$\xymatrix{
*+[F]{v_1^a} \ar@{-}[d]_a \ar@{-}[r]^a &*+[F]{v_2^b}\ar@{-}[r]^b &*+[F]{v_3^c} \ar@{-}[ld]^c\\
v_5 \ar@{-}[r]_d &*+[F]{v_4^d}
}$$ 

Case 3:  $a=b<c>d>a$.  
In this case,  $V'''=\{v_1^a,v_2^b,v_4^c,v_5^d\}$ is a minimal weighted vertex cover.
$$\xymatrix{
*+[F]{v_1^a} \ar@{-}[d]_a \ar@{-}[r]^a &*+[F]{v_2^b}\ar@{-}[r]^b &v_3 \ar@{-}[ld]^c\\
*+[F]{v_5^d} \ar@{-}[r]_d &*+[F]{v_4^c}
}$$ 

Case 4:  $a=b<c\geq d\leq a$.
This case fits our hypothesized conclusion.

Case 5: $a=b<c=d\leq a$.
This case is not possible because it states that $a<c\leq a$.

Case 6: $a=b<c=d>a$.
This case is covered by Step 3.

\textbf{Step 5.} Assume that $a\leq b \geq c$ and suppose $c>d$. We consider three cases.

 Case 1:  $a<b\geq c>d$.
Here, $V'''=\{v_1^a,v_2^b,v_4^c,v_5^d\}$ is a minimal weighted vertex cover.
$$\xymatrix{
*+[F]{v_1^a} \ar@{-}[d]_a \ar@{-}[r]^a &*+[F]{v_2^b}\ar@{-}[r]^b &v_3 \ar@{-}[ld]^c\\
*+[F]{v_5^d} \ar@{-}[r]_d &*+[F]{v_4^c}
}$$ 

 Case 2:  $a=b=c>d<a$  or $a=b=c>d\leq a$.
This case fits our desired conclusion.

 Case 3:  $a=b\geq c>d\geq a$.
This case is not possible because it states that $a\geq c> a$.

\textbf{Step 6.} Assume that $a\leq b \geq c\leq d$ and suppose 
that $a>d$.
If $c<d$, then $V'=\{v_1^a,v_2^b,v_4^c,v_5^a\}$ is a minimal weighted vertex cover. 
$$\xymatrix{
v_1 \ar@{-}[d]_a \ar@{-}[r]^a &*+[F]{v_2^a}\ar@{-}[r]^b &*+[F]{v_3^c} \ar@{-}[ld]^c\\
*+[F]{v_5^a} \ar@{-}[r]_d & *+[F]{v_4^d}
}$$
On the other hand, if $c=d$, then we have
$c=d<e=a\leq b\geq c$ which fits our  conclusion.

Thus, if $C^5_\omega$ is unmixed, we have  $e=a\leq b \geq c\leq d\geq e$,
as claimed. 
\end{proof}

\begin{Proposition} \label{Prop weighted 7-cycle mixed}
Every nontrivially weighted 7-cycle is mixed. 
\end{Proposition}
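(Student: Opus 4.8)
The plan is to show that a nontrivially weighted $7$-cycle admits two minimal weighted vertex covers of different cardinalities. A minimal weighted vertex cover of cardinality $4$ is automatic: since $C^7$ is unmixed (Fact~\ref{Thrm cycles mixedness}) its minimal vertex covers all have the common cardinality $\tau(C^7)=4$, and Proposition~\ref{Prop mvc implies mwvc} lifts any one of them to a minimal weighted vertex cover of $C^7_{\omega}$ of cardinality $4$. Thus it suffices to exhibit a minimal weighted vertex cover of cardinality $5$, since the existence of minimal weighted vertex covers of cardinalities $4$ and $5$ immediately gives that $C^7_{\omega}$ is mixed.

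To produce the cardinality-$5$ cover, I would first reduce to a normal form. Because $C^7_{\omega}$ is nontrivially weighted and $C^7$ is connected, some two adjacent edges carry different weights; so after a rotation or reflection (the dihedral symmetries of the cycle induce isomorphisms, which preserve minimal weighted vertex covers and their cardinalities) we may write $E(C^7)=\{e_1,\dots,e_7\}$ with $e_i=v_iv_{i+1}$, indices taken mod $7$, and weights $a_i=\omega(e_i)$ so that $a_1<a_2$.

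Then I would take $V'=\{v_1,v_2,v_4,v_5,v_7\}$ and choose $\delta'$ so that exactly these five vertices are forced. The governing idea is that setting $\delta'(v_2)=a_2$ makes $v_2$ cover $e_2$ but, since $a_2>a_1$, \emph{not} the smaller edge $e_1$; hence $e_1$ must be covered by $v_1$ with $\delta'(v_1)=a_1$, and this is precisely what forces the ``extra'' vertex $v_1$ into the cover. The vertices $v_4,v_5,v_7$ are, respectively, the only candidates in $V'$ that can cover the boundary edges $e_3,e_5,e_6$ (as $v_3,v_6\notin V'$), so each is assigned the largest weight permitted by the boundary edge it alone must cover, making it necessary. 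Each of the five vertices then uniquely covers at least one edge, so none can be removed, and each weight is chosen maximal, so none can be increased; hence $(V',\delta')$ is a minimal weighted vertex cover of cardinality $5$, and $C^7_{\omega}$ is mixed.

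The main obstacle is the bookkeeping for the three ``internal'' edges $e_1,e_4,e_7$ whose endpoints both lie in $V'$. The edge $e_1$ is handled uniformly by the forcing above, but $e_4=v_4v_5$ and $e_7=v_7v_1$ each require a short comparability analysis to pin down $\delta'(v_4),\delta'(v_5)$ and $\delta'(v_7)$ so that the edge is covered while the chosen weights stay maximal. For instance, for $e_4$ one compares $a_4$ with $a_3$ and $a_5$ to decide whether to cap $\delta'(v_4)$ at $a_4$ (so that $v_4$ itself covers $e_4$) or at $a_3$ (letting $v_5$ cover $e_4$), in each case checking that any increase would uncover the unique boundary edge that pins that weight down; similarly $e_7$ is covered by $v_1$ when $a_1\le a_7$ and otherwise by $v_7$. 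This casework is entirely analogous to the Steps in the proof of Theorem~\ref{Prop weighted 5-cycles mixedness}; it is routine, but it is where essentially all of the verification resides.
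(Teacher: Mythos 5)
Your proposal is correct and follows essentially the same route as the paper: both obtain the cardinality-$4$ cover for free from Proposition~\ref{Prop mvc implies mwvc} and then build a minimal weighted vertex cover of cardinality $5$ around a pair of adjacent edges of unequal weight, with the larger weight placed on the shared vertex so that the smaller edge forces an extra vertex into the cover. The only differences are cosmetic: the paper normalizes so that the smaller of the two weights is a global minimum (which removes one of your two pieces of residual casework, leaving a single two-case comparison), and its five-vertex set is a rotation/reflection of yours.
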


\begin{proof}
Let us consider a weighted 7-cycle whose underlying unweighted graph is $C^7 = v_1v_2v_3v_4v_5v_6v_7v_1$,  weighted as follows: 
$$
\xymatrix{
v_1 \ar@{-}[d]_g \ar@{-}[r]^a &v_2\ar@{-}[r]^b &v_3 \ar@{-}[r]^c &v_4 \ar@{-}[ld]^d\\
v_7 \ar@{-}[r]_f  &v_6\ar@{-}[r]_e &v_5
}
$$
By symmetry, let us say that $a$ is the smallest weight on any edge. Then since $C^7$ is not trivially weighted, at least one edge has weight strictly greater then $a$. By symmetry, assume that $a < g$. 

Since the unweighted
$C^7$ is unmixed and each minimal  vertex cover has cardinality 4,   Proposition~\ref{Prop mvc implies mwvc} provides a 
minimal weighted vertex cover of $C^7_\omega$ with cardinality 4. 
Now we will demonstrate a weighted vertex cover $V''$ such that $|V''|=5$. 
We consider two cases.

Case 1: $f\geq e\leq d$. In this case, $V''=\{v_1^g,v_2^a,v_3^c,v_5^d,v_6^e\}$
 is a minimal weighted vertex cover of $C^7_\omega$.
$$
\xymatrix{
*+[F]{v_1^g} \ar@{-}[d]_g \ar@{-}[r]^a & *+[F]{v_2^a}\ar@{-}[r]^b &*+[F]{v_3^c} \ar@{-}[r]^c &v_4 \ar@{-}[ld]^d\\
v_7 \ar@{-}[r]_f &*+[F]{v_6^e}\ar@{-}[r]_e &*+[F]{v_5^d}
}
$$

Case 2: $f\leq e \geq d$, $f\geq e\geq d$ or $f\leq e\leq d$. In this case, 
$V''=\{v_1^g,v_2^a,v_3^c,v_5^d,v_6^f\}$
is a minimal weighted vertex cover of $C^7_\omega$.
$$
\xymatrix{
*+[F]{v_1^g} \ar@{-}[d]_g \ar@{-}[r]^a & *+[F]{v_2^a}\ar@{-}[r]^b &*+[F]{v_3^c} \ar@{-}[r]^c &v_4 \ar@{-}[ld]^d\\
v_7 \ar@{-}[r]_f &*+[F]{v_6^f}\ar@{-}[r]_e &*+[F]{v_5^d}
}
$$
Since we have demonstrated 2 minimal weighted vertex covers for $C^7_\omega$ of different cardinalities we conclude that our nontrivially weighted $C^7_\omega$ is mixed.
\end{proof}

\begin{Proposition}\label{Prop complete graphs unmixed}
Every weighted complete graph $K^n_{\omega}$ is unmixed.
\end{Proposition}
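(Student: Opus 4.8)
The plan is to reduce everything to the elementary combinatorics of the underlying complete graph $K^n$. First I would record that every vertex cover $V'$ of $K^n$ satisfies $|V'|\geq n-1$: if two distinct vertices $v_i,v_j$ were both omitted from $V'$, then the edge $v_iv_j$ (which exists since $K^n$ is complete) would be uncovered. Consequently every weighted vertex cover $(V',\delta')$ of $K^n_{\omega}$ has $|V'|\in\{n-1,n\}$, because $V'$ is a vertex cover of $K^n$ and $V'\subseteq V$. So to prove unmixedness it suffices to show that no minimal weighted vertex cover has $|V'|=n$; then every minimal weighted vertex cover has cardinality exactly $n-1$, and hence $K^n_{\omega}$ is unmixed.

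The heart of the argument is the following claim: if $(V,\delta')$ is a weighted vertex cover whose vertex set is all of $V=\{v_1,\dots,v_n\}$, then it is not minimal. To see this, I would choose an index $k$ with $\delta'(v_k)$ maximal among $\delta'(v_1),\dots,\delta'(v_n)$ and consider the pair $(V\setminus\{v_k\},\delta'')$, where $\delta''$ is the restriction of $\delta'$. The check that this is still a weighted vertex cover splits into two cases. Edges not meeting $v_k$ are covered exactly as before, since both endpoints remain present with unchanged weights. For an edge $e=v_iv_k$, the hypothesis that $(V,\delta')$ is a weighted vertex cover forces $\delta'(v_i)\leq\omega(e)$ or $\delta'(v_k)\leq\omega(e)$; in the latter case the maximality of $\delta'(v_k)$ gives $\delta'(v_i)\leq\delta'(v_k)\leq\omega(e)$, so in all cases $v_i$ covers $e$ in the reduced pair. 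Hence $(V\setminus\{v_k\},\delta'')$ is a weighted vertex cover with $V\setminus\{v_k\}\subsetneq V$, so $(V\setminus\{v_k\},\delta'')<(V,\delta')$, and therefore $(V,\delta')$ is not minimal.

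Combining the two observations finishes the proof: a minimal weighted vertex cover of $K^n_{\omega}$ cannot have vertex set equal to $V$, so it must have cardinality $n-1$; since every minimal weighted vertex cover then shares this cardinality, $K^n_{\omega}$ is unmixed. The one point that requires care is that the minimality order on weighted vertex covers allows deletion of vertices and increase of weights simultaneously, so ruling out minimality is not, a priori, purely a matter of cardinality. The device that sidesteps this is to exhibit a single explicit vertex deletion (removing a vertex of maximal weight) that preserves the covering condition, which directly witnesses that $(V,\delta')$ is not minimal without any analysis of weight increases; in particular, neither Lemma~\ref{lem120507a} nor Proposition~\ref{Prop mwvc contained in wvc} is needed. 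I expect the selection of the maximal-weight vertex, together with the short verification that its deletion preserves the covering condition, to be the only substantive step.
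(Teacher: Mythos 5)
Your proposal is correct and follows essentially the same route as the paper: both arguments observe that every vertex cover of $K^n$ has at least $n-1$ vertices and then rule out minimal weighted vertex covers of cardinality $n$ by deleting a vertex of maximal weight, using that maximality to check every edge incident to it is still covered by its other endpoint. Your write-up is if anything slightly more careful in the covering verification, and it avoids the paper's appeal to Proposition~\ref{Prop mvc implies mwvc} by bounding the cardinality of all weighted vertex covers directly, but this is a cosmetic rather than a substantive difference.
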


\begin{proof} 
It is easily verified that the smallest minimal vertex cover for $K^n$ is of cardinality $n-1$. Therefore by Proposition \ref{Prop mvc implies mwvc} the smallest minimal weighted vertex cover for $K^n_{\omega}$  also has cardinality $n-1$. We show that there is not a minimal weighted vertex cover of cardinality $n$. Assume that $(V',\delta')$ is a minimal weighted vertex cover with cardinality $n$. By symmetry assume the vertex $v_1$ has the maximal weight of all the vertices of $V'$. Now consider the removal of $v_1$ from $V'$. Since $\delta'(v_1)$ was maximal and all other vertices of $V$ are in $(V',\delta')$ then every edge adjacent to $v_1$ must be covered by the other vertex adjacent to that edge. Thus $V'$ is not minimal and every minimal weighted vertex cover has cardinality $n-1$ which implies $K^n_{\omega}$ is unmixed.
\end{proof}

\section{Cohen-Macaulay Weighted Graphs}
\label{sec120507d}

In this section, we prove Theorems~\ref{intthm120507a}
and~\ref{intthm120507b} from the introduction.

\begin{convention*}
In this section, $A$ is a field.
\end{convention*}

\begin{Definition}
The weighted graph $G_{\omega}$ is 
\textbf{Cohen-Macaulay over $A$} if the ring $R/I(G_\omega)$ is Cohen-Macaulay.
If $G_\omega$ is Cohen-Macaulay over every field, we simply say that it is
\textbf{Cohen-Macaulay}.
\end{Definition}

The Cohen-Macaulay
weighted complete graphs are easily identified.

\begin{Proposition}\label{Prop complete graphs CM}
Every weighted complete graph $K^n_{\omega}$ is Cohen-Macaulay.
\end{Proposition}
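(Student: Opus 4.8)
The plan is to show that $R/I(K^n_{\omega})$ is a one-dimensional ring of positive depth, which is exactly the Cohen--Macaulay condition in dimension one. First I would extract from the proof of Proposition~\ref{Prop complete graphs unmixed} the sharper fact that every minimal weighted vertex cover of $K^n_{\omega}$ has cardinality exactly $n-1$. Combining this with Proposition~\ref{Prop primes and vertex covers}\eqref{item120508a2}, the associated primes of $I(K^n_{\omega})$ are precisely the ideals $P(V')$ coming from minimal weighted vertex covers $(V',\delta')$, and each such prime has height $|V'|=n-1$. Since the complete graph $K^n$ has $n$ vertices, here $R=A[X_1,\ldots,X_n]$, and so $\dim R/I(K^n_{\omega}) = n-(n-1)=1$. (The same height computation applied to the minimal primes via Proposition~\ref{Prop primes and vertex covers}\eqref{item120508a1} confirms that the dimension is exactly $1$, since the minimal vertex covers of $K^n$ also all have size $n-1$.)

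Next I would observe that, because every associated prime of $I(K^n_{\omega})$ has height $n-1<n$, the irrelevant maximal ideal $\m=(X_1,\ldots,X_n)R$ is \emph{not} associated to $R/I(K^n_{\omega})$. Hence $R/I(K^n_{\omega})$ admits a nonzerodivisor in $\m$, so $\depth R/I(K^n_{\omega})\geq 1$. Since depth never exceeds dimension, this gives $1\leq \depth R/I(K^n_{\omega})\leq \dim R/I(K^n_{\omega})=1$, forcing equality and hence the Cohen--Macaulay property. The degenerate case $n=1$, where $I(K^n_{\omega})=0$ and $R$ is a polynomial ring, is Cohen--Macaulay trivially.

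Essentially all the content is packaged in the earlier results: the combinatorial input that the minimal weighted vertex covers of $K^n_{\omega}$ all share cardinality $n-1$ (Proposition~\ref{Prop complete graphs unmixed}) and the dictionary between minimal weighted vertex covers and associated primes (Proposition~\ref{Prop primes and vertex covers}). The remaining step is the standard fact that an unmixed quotient of dimension one has no embedded primes and therefore automatically has depth one, so there is no genuine obstacle once the dimension is pinned down. The only point requiring a little care is the bookkeeping that the number of vertices of $K^n$ equals the number of variables, so that the common associated-prime height $n-1$ really does yield dimension one rather than a larger value.
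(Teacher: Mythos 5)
Your proposal is correct and follows essentially the same route as the paper: the paper's proof also deduces Cohen--Macaulayness from the facts that $K^n_{\omega}$ is unmixed (Proposition~\ref{Prop complete graphs unmixed}) and that $\dim R/I(K^n_{\omega})=1$ because every minimal (weighted) vertex cover has cardinality $n-1$, invoking ``unmixed in dimension one implies Cohen--Macaulay.'' Your only addition is to unpack that last standard fact via associated primes and the existence of a nonzerodivisor in the maximal ideal, which the paper leaves implicit.
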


\begin{proof} 
By Proposition \ref{Prop complete graphs unmixed} we know that $K^n_{\omega}$ is unmixed. Since $A$ is a field, we also know that $\dim (K^n_{\omega}) = 1$ because the cardinality of the minimal vertex covers are $n-1$. Since unmixed in dimension 1 implies Cohen-Macaulay, we conclude that $K^n_{\omega}$ is Cohen-Macaulay. 
\end{proof}

Next, we characterize the Cohen-Macaulay weighted suspensions and trees.
One main point is the following lemma
whose proof  is essentially due to J.~Herzog; see~\cite[Proposition 2.2]{Vi1} 
and~\cite[Proposition 6.3.2]{Vi}.

\begin{Lemma}\label{Thrm conditions for R/I CM}
Let $S= A[Y_1,\dots, Y_n,Z_1,\dots, Z_n]$ be a polynomial ring over $A$, and
fix a subset  $M \subseteq \{(i,j)\mid 1\leq i<j\leq n\}$. Then the ideal 
$$I = (Y_i^{a_{i}}Z_i^{a_{i}}, Z_i^{b_{ij}}Z_j^{b_{ij}}\mid \text{$i=1,\dots, n$ and 
$(i,j)\in M$ and $a_i\geq b_{ij}\leq a_j$} )S$$ 
is such that $S/I$ is Cohen-Macaulay.
\end{Lemma}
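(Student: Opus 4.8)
The plan is to show that $S/I$ is Cohen-Macaulay by exhibiting an explicit regular sequence and identifying the quotient. Let me look at the structure of the ideal $I$. The generators come in two types: the "whisker" terms $Y_i^{a_i}Z_i^{a_i}$ for each $i=1,\dots,n$, and the "internal edge" terms $Z_i^{b_{ij}}Z_j^{b_{ij}}$ for $(i,j)\in M$. The key observation is that each variable $Y_i$ appears in exactly one generator, namely $Y_i^{a_i}Z_i^{a_i}$, and the condition $a_i \geq b_{ij}$ links the whisker weights to the internal edge weights. This is precisely the structure that makes Herzog's polarization/deformation argument work for suspensions.

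First I would reduce to the unweighted (or trivially weighted) case by a deformation argument, or alternatively proceed directly by showing that the sequence of "whisker differences" forms a regular sequence modulo $I$. Concretely, my first step would be to verify that $\dim(S/I) = n$: since the $n$ generators $Y_i^{a_i}Z_i^{a_i}$ already force, in any minimal prime, at least one of $X_{Y_i}, X_{Z_i}$ to vanish per $i$, and there are $n$ independent whiskers, one checks that the minimal primes all have height $n$, so $\dim(S/I)=2n-n=n$. This uses the analysis of minimal vertex covers of a suspension, which earlier sections furnish via Theorem~\ref{Cor I(G)= int.P} and Proposition~\ref{Prop primes and vertex covers}.

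Next I would show that $S/I$ is Cohen-Macaulay by producing an explicit system of parameters that is a regular sequence. The natural candidate, following Herzog, is to use the $n$ elements $Y_i^{a_i} - Z_i^{a_i}$ (or suitable analogues accounting for the weights) together with enough linear-type elements to cut down to an Artinian quotient, and then to argue that the associated graded or the sequence itself is regular by a direct monomial computation. The cleanest route is probably to invoke the known result that the suspension (whisker graph) is Cohen-Macaulay in the unweighted case \cite[Proposition 6.3.2]{Vi}, and then lift it to the weighted setting: one observes that substituting $Y_i \mapsto Y_i^{a_i}$, $Z_i \mapsto Z_i^{a_i}$ gives a flat map from the unweighted picture, and the internal-edge condition $a_i \geq b_{ij} \leq a_j$ guarantees that the weighted monomial relations are consistent with the substitution, so Cohen-Macaulayness is preserved.

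The hard part will be handling the compatibility of the two weight systems, that is, making the inequality $a_i \geq b_{ij} \leq a_j$ do its job. Without this hypothesis the internal edge $Z_i^{b_{ij}}Z_j^{b_{ij}}$ could fail to be "dominated" by the whisker generators, and the depth computation would break. So the crux is to check that under $a_i \geq b_{ij}$, every minimal prime of $I$ still corresponds to a vertex cover that includes, for each $i$, the variable $Z_i$ (or its whisker partner $Y_i$) in a controlled way, keeping all minimal primes of equal height $n$ and simultaneously making the regular sequence argument go through. I expect this is exactly where Herzog's original argument is invoked, so I would state the reduction carefully and then cite \cite[Proposition 2.2]{Vi1} and \cite[Proposition 6.3.2]{Vi} for the final regular-sequence verification rather than reproving it from scratch.
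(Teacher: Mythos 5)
There is a genuine gap here: neither of your two proposed routes to Cohen--Macaulayness is actually carried out, and the one you call ``cleanest'' does not work. The substitution $Y_i\mapsto Y_i^{a_i}$, $Z_i\mapsto Z_i^{a_i}$ sends the unweighted edge generator $Z_iZ_j$ to $Z_i^{a_i}Z_j^{a_j}$, whereas the corresponding generator of $I$ is $Z_i^{b_{ij}}Z_j^{b_{ij}}$ with the \emph{same} exponent $b_{ij}$ on both variables; since $b_{ij}$ may be strictly smaller than both $a_i$ and $a_j$ and may vary from edge to edge at a fixed vertex, there is no substitution of the form $Z_i\mapsto Z_i^{c_i}$ (flat or otherwise) taking the unweighted suspension ideal to $I$, so Cohen--Macaulayness cannot be ``lifted'' this way. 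Your other route --- exhibiting $Y_1^{a_1}-Z_1^{a_1},\dots,Y_n^{a_n}-Z_n^{a_n}$ as a regular sequence --- does produce a homogeneous system of parameters (the quotient is Artinian because $Z_i^{2a_i}\equiv Y_i^{a_i}Z_i^{a_i}\equiv 0$ and likewise $Y_i^{2a_i}\equiv 0$), but for a standard graded ring a homogeneous system of parameters is a regular sequence \emph{if and only if} the ring is Cohen--Macaulay, so asserting regularity ``by a direct monomial computation'' merely restates the claim; that computation is the entire content of the lemma and you have not supplied it. The dimension count $\dim(S/I)=n$ is fine but is not where the difficulty lies.

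The paper's actual argument is the polarization form of Herzog's trick, and the hypothesis $a_i\ge b_{ij}\le a_j$ enters at a specific point that your sketch never pins down. One polarizes $I$ to an ideal $I'$ in a larger polynomial ring $S'$; because $b_{ij}\le a_i$ and $b_{ij}\le a_j$, each polarized generator $Z_{i,1}\cdots Z_{i,b_{ij}}Z_{j,1}\cdots Z_{j,b_{ij}}$ involves only the first $a_i$ (respectively $a_j$) of the $Z$-slots, so after identifying the variables $Y_{i,1},\dots,Y_{i,a_i}$ with the slots $Z_{i,a_i+1},\dots,Z_{i,2a_i}$ the ideal $I'$ is \emph{simultaneously} the polarization of the Artinian ideal $J=(Z_i^{2a_i},\,Z_i^{b_{ij}}Z_j^{b_{ij}})$ in $A[Z_1,\dots,Z_n]$. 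Since that Artinian quotient is Cohen--Macaulay, and both it and $S/I$ are obtained from $S'/I'$ by factoring out regular sequences of variable differences, $S/I$ is Cohen--Macaulay. To repair your proposal you would need to supply this (or an equivalent) argument at the point where you currently write that you ``expect this is exactly where Herzog's original argument is invoked.''
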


\begin{proof}
We  polarize the ideal $I$ to obtain 
\begin{align*}
S'&= k[Y_{1,1},\dots, Y_{1,a_1},\dots,Y_{n,1}\dots, Y_{n,a_n},Z_{1,1}\dots, Z_{1,a_1},\dots, Z_{n,1}\dots, Z_{n,a_n}] \\
I'&=(Y_{1,1}\cdots Y_{1,a_1}Z_{1,1}\cdots Z_{1,a_1},\dots, Y_{n,1}\cdots Y_{n,a_n}Z_{n,1}\cdots Z_{n,a_n}, Z_{i,1}\cdots Z_{i,b_{ij}}Z_{j,1}\cdots Z_{j,b_{ij}})S'.
\end{align*}
By general properties of polarization, the next sequence is
$S'$-regular and $S'/I'$-regular:
\begin{align*}
&Y_{1,1}-Y_{1,2},\dots, Y_{1,1}-Y_{1,a_1},\dots, Y_{n,1}-Y_{n,2},\dots, Y_{n,1}-Y_{n,a_n},\\
&Z_{1,1}-Z_{1,2},\dots, Z_{1,1}-Z_{1,a_1},\dots,Z_{n,1}-Z_{n,2},\dots, Z_{n,1}-Z_{n,a_n}.
\end{align*} 

Note that $I'$ is a polarization of
the ideal $J=(Z_i^{2a_i},Z_i^{b_{i,j}}Z_j^{b_{i,j}})T$ where
$T= A[Z_1,\dots, Z_n]$.
The ring $T/J$ is Artinian, so it is Cohen-Macaulay. 
Since $T/J$ is obtained from $S'/I'$ by modding out by a homogeneous regular sequence, it follows that $S'/I'$ is Cohen-Macaulay.
Similarly, we conclude  that $S/I$ is Cohen-Macaulay, as desired. 
\end{proof}

\begin{Definition}\label{def120607a}
Recall that $G$ has vertex set $V(G)=\{v_1,\ldots,v_d\}$.
A \emph{suspension} of $G$ is a graph $H$ whose vertex set is
$V(H)=V(G)\cup\{w_1,\ldots,w_d\}$ and whose edge set is
$E(H)=E(G)\cup\{v_1w_1,\ldots,v_dw_d\}$.
In other words, $H$ is obtained from $G$ by adding to $G$ a new vertex $w_i$ and edge
(sometimes called a ``whisker'') $v_iw_i$ for each vertex $v_i$ of $G$.
\end{Definition}

\begin{Note} \label{note120607a}
Let $H$ be a suspension of $G$.
Graphically, this says that $T$ has the form
$$
\xymatrix{
&w_k\ar@{-}[d] &w_i\ar@{-}[d] &w_j\ar@{-}[d]\\
 \ar@{.}[r]&v_k\ar@{-}[r] &v_i\ar@{-}[r] &v_j\ar@{.}[r]&
}
$$ 
where the bottom ``row'' is the graph $G$.
(Note that this sketch is deceptively oversimplified since the bottom row 
can be any graph.)
\end{Note}

\begin{Definition}\label{def120607b}
Let $G_{\omega}$ and $H_{\lambda}$ be  weighted graphs.
Then $H_{\lambda}$ is a \emph{weighted suspension} of $G_{\omega}$
if the underlying graph $H$ is a suspension of $G$ and
(with notation as in Definition~\ref{def120607a})
we have $\lambda(v_iv_j)=\omega(v_iv_j)$ for all $v_iv_j\in E(G)$.
\end{Definition}

\begin{Theorem}\label{thm120607}
Let $H_{\lambda}$ be a weighted suspension of a weighted graph $G_{\omega}$,
with notation as in Definition~\ref{def120607a}.
Then the following conditions are equivalent:
\begin{enumerate}[\rm(i)]
\item \label{thm120607a} $H_{\lambda}$ is Cohen-Macaulay,
\item \label{thm120607b} $H_{\lambda}$ is unmixed,
\item \label{thm120607c} for each
$v_iv_j\in E(G)$ we have $\lambda(v_iv_j) \leq \lambda(v_iw_i)$ and $\lambda(v_iv_j) \leq \lambda(w_jv_j)$.
\end{enumerate}
\end{Theorem}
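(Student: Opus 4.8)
The plan is to prove the cyclic chain of implications (i) $\Rightarrow$ (ii) $\Rightarrow$ (iii) $\Rightarrow$ (i). The implication (i) $\Rightarrow$ (ii) is essentially free: a Cohen-Macaulay ring $R/I(H_\lambda)$ over a field is unmixed because all associated primes of a Cohen-Macaulay ring have the same height, and by Corollary~\ref{cor120508a} the m-unmixedness of $I(H_\lambda)$ is equivalent to the unmixedness of $H_\lambda$. The real content lives in (ii) $\Rightarrow$ (iii) and (iii) $\Rightarrow$ (i).

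\textbf{The combinatorial step (ii) $\Rightarrow$ (iii).} I would argue the contrapositive. Suppose condition~\eqref{thm120607c} fails, so there is an edge $v_iv_j\in E(G)$ with, say, $\lambda(v_iv_j) > \lambda(v_iw_i)$. The key observation is that every minimal weighted vertex cover of a suspension has a predictable cardinality, and I would first compute a ``baseline'' minimal weighted vertex cover to fix the target size. Because each whisker $v_kw_k$ forces at least one of $v_k,w_k$ into any cover, a natural minimal cover takes all the ``bottom'' vertices $v_1,\dots,v_d$ with appropriate weights; this covers every edge of $G$ as well as every whisker, and one checks via Lemma~\ref{lem120507a} that increasing weights or deleting vertices breaks coverage, giving a minimal cover of cardinality $d$. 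To produce a cover of \emph{different} cardinality I would exploit the failed inequality: when $\lambda(v_iv_j) > \lambda(v_iw_i)$, the vertex $v_i$ at the low weight $\lambda(v_iw_i)$ can no longer cover the edge $v_iv_j$, so that edge must instead be covered by $v_j$; this lets me swap $v_i$ out in favor of its whisker-vertex $w_i$ while keeping $v_j$, building a cover where one of the $v_k$ is replaced by $w_k$ and an extra adjustment changes the total count. The goal is to produce a minimal cover of size $d+1$ (or $d-1$), contradicting unmixedness. The main obstacle is bookkeeping: I must check minimality carefully (no vertex deletable, no weight increasable) across the various sub-cases dictated by how the weights around $v_i$ and $v_j$ compare, much as in the case analysis of Theorem~\ref{Prop weighted 5-cycles mixedness}.

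\textbf{The algebraic step (iii) $\Rightarrow$ (i).} This is where Lemma~\ref{Thrm conditions for R/I CM} does the heavy lifting, and recognizing the correct dictionary is the crux. Assume~\eqref{thm120607c} holds. I would set up the polynomial ring with variables $Y_k$ for the whisker-vertices $w_k$ and $Z_k$ for the base vertices $v_k$, so that each whisker edge $v_kw_k$ contributes a generator $Y_k^{a_k}Z_k^{a_k}$ with $a_k=\lambda(v_kw_k)$, and each graph edge $v_iv_j\in E(G)$ contributes $Z_i^{b_{ij}}Z_j^{b_{ij}}$ with $b_{ij}=\lambda(v_iv_j)$. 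Condition~\eqref{thm120607c} says exactly that $b_{ij}=\lambda(v_iv_j)\le\lambda(v_iw_i)=a_i$ and $b_{ij}\le a_j$, i.e. $a_i\ge b_{ij}\le a_j$, which is precisely the hypothesis of Lemma~\ref{Thrm conditions for R/I CM} with $M$ the edge set of $G$. Thus $I(H_\lambda)$ has exactly the form of the ideal $I$ in that lemma, and we conclude $R/I(H_\lambda)$ is Cohen-Macaulay. The only thing to verify is that the variable relabeling is an isomorphism of polynomial rings carrying $I(H_\lambda)$ to $I$, which is immediate once the correspondence $w_k\leftrightarrow Y_k$, $v_k\leftrightarrow Z_k$ is fixed. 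I expect the combinatorial step (ii) $\Rightarrow$ (iii) to be the genuine obstacle, since the algebraic step is essentially a pattern-match against the preceding lemma.
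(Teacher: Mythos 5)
Your overall architecture matches the paper's: (i)$\Rightarrow$(ii) is standard, (iii)$\Rightarrow$(i) is a direct pattern-match against Lemma~\ref{Thrm conditions for R/I CM} with the dictionary $w_k\leftrightarrow Y_k$, $v_k\leftrightarrow Z_k$, $a_k=\lambda(v_kw_k)$, $b_{ij}=\lambda(v_iv_j)$ (this part of your proposal is correct and is exactly what the paper does), and the content is in (ii)$\Rightarrow$(iii). But your sketch of that step has a genuine error. You write that when $\lambda(v_iv_j)>\lambda(v_iw_i)$, ``the vertex $v_i$ at the low weight $\lambda(v_iw_i)$ can no longer cover the edge $v_iv_j$.'' This is backwards: by Definition~\ref{Def weighted vertex cover}, $v_i$ covers $e$ when $\delta'(v_i)\leq\omega(e)$, so a vertex at \emph{low} weight covers edges of \emph{higher} weight. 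With $\delta'(v_i)=\lambda(v_iw_i)<\lambda(v_iv_j)$ the vertex $v_i$ covers both the whisker and the edge $v_iv_j$, and your proposed move --- swap $v_i$ out for $w_i$ while keeping $v_j$ --- then produces a cover of the same cardinality $d$ as your baseline, not a different one. The ``extra adjustment'' that is supposed to change the count is exactly the missing idea, and it is the crux of the whole implication.

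The correct construction (the one the paper uses) exploits the inequality in the opposite direction: put $v_i$ at the \emph{high} weight $b=\lambda(v_iv_j)$, so that $v_i^b$ covers $v_iv_j$ but fails to cover the whisker $w_iv_i$ of weight $a<b$, which forces $w_i^a$ into the cover as an extra vertex; and simultaneously replace $v_j$ by $w_j^{\lambda(w_jv_j)}$, so that $v_iv_j$ can \emph{only} be covered by $v_i$ and hence $v_i^b$ cannot be deleted and its weight cannot be increased. Taking $V'=\{v_i^b,w_i^a,w_j^c\}\cup\{v_k^{e_k}\mid k\neq i,j\}$ with $e_k=\min\{\lambda(w_kv_k),\lambda(v_kv_l)\mid v_kv_l\in E(H_\lambda)\}$ gives a minimal weighted vertex cover of cardinality $d+1$, contradicting unmixedness. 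Without swapping $v_j$ for $w_j$, the vertex $v_i^b$ is redundant (the edge $v_iv_j$ would already be covered by $v_j^{e_j}$), so minimality fails and no contradiction is obtained. Your proposal as written does not reach this configuration.
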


\begin{proof}
$\eqref{thm120607a}\implies\eqref{thm120607b}$
This is standard.

$\eqref{thm120607b}\implies\eqref{thm120607c}$
Assume  that $H_{\lambda}$ is unmixed.
Since  the underlying unweighted graph $H$ is a suspension, 
it is Cohen-Macaulay by~\cite[Proposition 2.2]{Vi1}. 
In particular, it is unmixed.
It is straightforward to show that the set $V'=E(G)$ is a minimal vertex cover for $H$,
so each minimal vertex cover of $H$ has cardinality $d$.
Proposition~\ref{Prop mvc implies mwvc} implies that the cardinality of each minimal weighted vertex cover of $H_{\lambda}$ is also $d$. 
Suppose that there exists some $i$ such that $a=\lambda(w_iv_i)<\lambda(v_iv_j)=b$ and $c=\lambda(w_jv_j)$. We derive a contradiction by constructing a minimal weighted vertex cover $V''$ such that 
$|V''|= d+1$. 

For each $k\neq i,j$ set 
$$e_k=\min\{\lambda(w_kv_k),\lambda(v_kv_l)\mid v_kv_l\in E(H_{\lambda})\}.$$
Let $V'=\{v_i^b,w_i^a,w_j^c,v_k^{e_k}\mid k\neq i,j\}$. 
It is routine to verify that this is indeed a weighted vertex cover of $H_{\lambda}$. 
Proposition~\ref{Prop mwvc contained in wvc} implies that there is a minimal weighted vertex cover $(V'',\delta'')$ 
of $G_{\lambda}$ such that $(V'',\del'')\leq(V',\del')$.
Note that for $k\neq i,j$, the vertex $v_k$ cannot be removed from $V'$ since this would
leave the edge $w_kv_k$ uncovered. (However, it may be that the weight on $v_k$ can
be increased.) The vertex $w_j$ cannot be removed from $V'$, and its weight cannot be increased,
because this would leave the edge $w_jv_j$ uncovered.
If we remove $v_i^b$ from $V'$ or increase the weight, then the edge $v_iv_j$ is not covered. If we remove $w_i^a$ or increase the weight, then the edge $w_iv_i$ is not covered. Thus $V''$ is a minimal weighted vertex cover such that $|V''|=|V'|=r+1$,
providing the desired contradiction. 

$\eqref{thm120607c}\implies\eqref{thm120607a}$
This follows from Lemma~\ref{Thrm conditions for R/I CM}.
\end{proof}

\begin{Remark}\label{rmk120610b}
Remark~\ref{rmk120610b}
shows that the equivalence of conditions~\eqref{thm120607b} and~\eqref{thm120607c} of
Theorem~\ref{thm120607} do not need the assumption that $A$ is a field.
Similar comments hold for 
Theorem~\ref{Thrm conditions for trees iff CM}
and
Corollary~\ref{Cor path results}.
\end{Remark}

\begin{Note} \label{note120511a}
Cohen-Macaulay unweighted trees have been explicitly characterized  as follows:
a tree $T$ is Cohen-Macaulay if and only if
either $|V(T_{\omega})|\leq 2$, or 
$T$ is a suspension of a tree;
see,
e.g.,
\cite[Theorem 6.3.4 and Corollary 6.3.5]{Vi}.
We see next that a weighted tree is Cohen-Macaulay if and only if
its  underlying unweighted graph has this form, with some restrictions on the weights.
\end{Note}

Theorem~\ref{intthm120507b} from the introduction is a consequence of the next result.

\begin{Theorem}\label{Thrm conditions for trees iff CM}
Let $H_{\lambda}$ be a weighted tree.
Then the following conditions are equivalent:
\begin{enumerate}[\rm(i)]
\item \label{item120509x} $H_{\lambda}$ is Cohen-Macaulay,
\item \label{item120509y} $H_{\lambda}$ is unmixed,
\item \label{item120509z} one of the following holds:
\begin{enumerate}[\rm(1)]
\item \label{item120509a} $|V(H_{\lambda})|\leq 2$ or 
\item \label{item120509b}  $H_{\lambda}$ is a weighted suspension of a weighted tree
$G_{\omega}$ such that (with notation as in Definition~\ref{def120607a})
we have $\lambda(v_iv_j) \leq \lambda(v_iw_i)$ and $\lambda(v_iv_j) \leq \lambda(w_jv_j)$
for each
$v_iv_j\in E(G)$.
\end{enumerate}
\end{enumerate}
In particular, if $H_{\lambda}$ is Cohen-Macaulay is Cohen-Macaulay, then so is $H$.
\end{Theorem}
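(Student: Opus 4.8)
The plan is to prove the chain of equivalences $\eqref{item120509x}\implies\eqref{item120509y}\implies\eqref{item120509z}\implies\eqref{item120509x}$, leaning heavily on the suspension result Theorem~\ref{thm120607} and the structural characterization of Cohen-Macaulay unweighted trees recalled in Note~\ref{note120511a}. The first implication $\eqref{item120509x}\implies\eqref{item120509y}$ is standard (Cohen-Macaulay implies unmixed over a field), exactly as in the proof of Theorem~\ref{thm120607}.

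For $\eqref{item120509y}\implies\eqref{item120509z}$, I would first dispose of the small case: if $|V(H_\lambda)|\leq 2$ we land in~\eqref{item120509a} and are done. So assume $|V(H)|\geq 3$ and that $H_\lambda$ is unmixed. By Proposition~\ref{Prop mixed ei then mixed wei} the underlying unweighted tree $H$ is then unmixed, hence Cohen-Macaulay, so by Note~\ref{note120511a} the tree $H$ is a suspension of some tree $G$. The content here is to verify that this topological suspension structure refines to a \emph{weighted} suspension in the sense of Definition~\ref{def120607b}, i.e.\ that the weight data restricts correctly; this is essentially bookkeeping once the whiskers $v_iw_i$ are identified. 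Having written $H_\lambda$ as a weighted suspension of $G_\omega$, I invoke the implication $\eqref{thm120607b}\implies\eqref{thm120607c}$ of Theorem~\ref{thm120607}: unmixedness of $H_\lambda$ forces the inequalities $\lambda(v_iv_j)\leq\lambda(v_iw_i)$ and $\lambda(v_iv_j)\leq\lambda(w_jv_j)$, which is precisely condition~\eqref{item120509b}.

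For $\eqref{item120509z}\implies\eqref{item120509x}$, I split on the two sub-cases. If~\eqref{item120509a} holds then $H_\lambda$ has at most two vertices, so $R/I(H_\lambda)$ is either the polynomial ring itself or a complete intersection (a single weighted edge $X_i^aX_j^a$), which is Cohen-Macaulay; alternatively one notes it is unmixed of dimension at most one. If~\eqref{item120509b} holds, then $H_\lambda$ is a weighted suspension satisfying exactly condition~\eqref{thm120607c} of Theorem~\ref{thm120607}, so that theorem gives $\eqref{thm120607c}\implies\eqref{thm120607a}$, namely $H_\lambda$ is Cohen-Macaulay. The final ``in particular'' clause then follows immediately: if $H_\lambda$ is Cohen-Macaulay, it is unmixed, so by Proposition~\ref{Prop mixed ei then mixed wei} the unweighted $H$ is unmixed, hence (being a tree) Cohen-Macaulay by Note~\ref{note120511a}.

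The main obstacle I anticipate is the middle step: passing from ``$H$ is a suspension of a tree'' (a statement about the \emph{unweighted} graph) to ``$H_\lambda$ is a \emph{weighted} suspension of a weighted tree $G_\omega$,'' and then correctly applying Theorem~\ref{thm120607}. The subtlety is that the characterization in Note~\ref{note120511a} only tells us $H$ has suspension \emph{shape}; one must check the combinatorial claim that the added vertices $w_i$ are genuine leaves whose unique incident edges serve as whiskers, so that Definition~\ref{def120607b} applies verbatim and the weight function $\lambda$ decomposes as required. Everything downstream is then a clean citation of the already-proved suspension theorem.
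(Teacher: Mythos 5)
Your proposal is correct and follows essentially the same route as the paper: the standard implication (i)$\implies$(ii), then Proposition~\ref{Prop mixed ei then mixed wei} plus Note~\ref{note120511a} to get the suspension structure and Theorem~\ref{thm120607} for the weight inequalities, and finally Theorem~\ref{thm120607} (or Lemma~\ref{Thrm conditions for R/I CM}) for (iii)$\implies$(i). The ``bookkeeping'' step you flag (upgrading the unweighted suspension to a weighted suspension) is exactly the point the paper dispatches with ``it follows readily,'' so your treatment is, if anything, slightly more explicit.
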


\begin{proof} 
$\eqref{item120509x}\implies\eqref{item120509y}$
This is standard.

$\eqref{item120509y}\implies\eqref{item120509z}$
Assume  that $H_{\lambda}$ is unmixed and that $|V(H_{\lambda})|> 2$;
we need to show that condition~(2) is satisfied.
By Proposition \ref{Prop mixed ei then mixed wei} the underlying unweighted graph $H$ is unmixed. Since we have $|V(H_{\lambda})|> 2$, it follows from 
Note~\ref{note120511a} that $H$ is a suspension of a tree $G$.
It follows readily that $H_{\lambda}$ is a weighted suspension of a weighted tree
$G_{\omega}$. The condition
$\lambda(v_iv_j) \leq \lambda(v_iw_i)$ and $\lambda(v_iv_j) \leq \lambda(w_jv_j)$
for each
$v_iv_j\in E(G)$ follows from Theorem~\ref{thm120607}.

$\eqref{item120509z}\implies\eqref{item120509x}$
This follows from Lemma~\ref{Thrm conditions for R/I CM} or Theorem~\ref{thm120607}.
\end{proof}

For instance, Theorem~\ref{Thrm conditions for trees iff CM} provides 
the following explicit
characterization of Cohen-Macaulay weighted paths.

\begin{Corollary}\label{Cor path results}
Let $P_{\omega}$ be a weighted path.
Then the following conditions are equivalent:
\begin{enumerate}[\rm(i)]
\item \label{item120509x1} $P_{\omega}$ is Cohen-Macaulay,
\item \label{item120509y2} $P_{\omega}$ is unmixed,
\item \label{item120509z3} one of the following holds:
$P_{\omega}$ is of length 1 or of length 3 of the following form
$$\xymatrix{
x_1 \ar@{-}[r]^a &y_1\ar@{-}[r]^b &y_2 \ar@{-}[r]^c&x_2
}$$
such that $b\leq a$ and $b\leq c$.
\end{enumerate}
\end{Corollary}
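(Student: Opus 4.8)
The plan is to derive Corollary~\ref{Cor path results} as a direct specialization of Theorem~\ref{Thrm conditions for trees iff CM}, since a path is a tree and the theorem already packages the equivalence of Cohen-Macaulayness, unmixedness, and the combinatorial weight conditions for all weighted trees. The equivalences $\eqref{item120509x1}\Leftrightarrow\eqref{item120509y2}$ come for free from the corresponding equivalence $\eqref{item120509x}\Leftrightarrow\eqref{item120509y}$ in the theorem, so the entire content of the corollary is the translation of condition~\eqref{item120509z} of the theorem into the explicit statement about short paths with the displayed weight inequality. Thus the real work is a purely combinatorial analysis of which paths $P_\omega$ satisfy the ``suspension of a tree'' condition~\eqref{item120509b}.

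First I would recall that a path $P$ on $n$ vertices is a tree, so the theorem applies. The case $|V(P_\omega)|\leq 2$ is condition~\eqref{item120509a} and corresponds exactly to ``length $1$'' in the corollary (a single edge). For $|V(P_\omega)|> 2$ I would invoke condition~\eqref{item120509b}: the path must be a \emph{suspension} of some smaller tree $G$. The key combinatorial observation is that a path is a suspension of a tree only in a very restricted way. Since forming a suspension $H$ of $G$ attaches a distinct whisker $v_iw_i$ to \emph{every} vertex $v_i$ of $G$, each vertex of $G$ becomes an internal (non-leaf) vertex of $H$ of degree at least its degree in $G$ plus one, and each $w_i$ is a leaf. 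For $H$ to be a path, the only tree $G$ that can arise is itself a single edge $y_1y_2$: attaching one whisker to each endpoint of the edge $y_1y_2$ produces precisely the $4$-vertex path $x_1y_1y_2x_2$, where $x_1=w_1$ and $x_2=w_2$ are the whisker-leaves. I would verify that no longer path can be a suspension, because a suspension has exactly half its vertices as leaves and half as centers, forcing $|V(H)|$ even and forcing the ``center'' graph $G$ to be a path on $|V(H)|/2$ vertices all of whose vertices have a private whisker — but in a path only the two endpoints have degree $1$, so attaching whiskers to interior center-vertices would raise their degree past $2$, contradicting that $H$ is a path unless $G$ has at most two vertices. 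Hence the only case with $|V(P_\omega)|>2$ is the length-$3$ path shown.

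With the underlying graph pinned down as $x_1y_1y_2x_2$, I would then specialize the weight condition from~\eqref{item120509b}. Here $G=y_1y_2$ has the single edge $v_1v_2=y_1y_2$ with whiskers $v_1w_1=y_1x_1$ and $v_2w_2=y_2x_2$; writing $a=\lambda(y_1x_1)$, $b=\lambda(y_1y_2)$, $c=\lambda(y_2x_2)$, the inequalities $\lambda(v_iv_j)\leq\lambda(v_iw_i)$ and $\lambda(v_iv_j)\leq\lambda(w_jv_j)$ become exactly $b\leq a$ and $b\leq c$, matching the corollary's stated form. This completes the identification of condition~\eqref{item120509z} of the theorem with condition~\eqref{item120509z3} of the corollary, so the corollary follows.

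The main obstacle, and the only step requiring genuine care, is the graph-theoretic claim that a path is a suspension of a tree if and only if it is the $4$-vertex path (equivalently, the suspension of a single edge). I would want to state this cleanly, perhaps by a short degree-counting argument as above, to rule out all longer paths; everything else is bookkeeping that reads the already-proven general theorem off in this special case. I should also take care to note that ``length $3$'' in the corollary refers to a path with three edges and four vertices, consistent with the displayed diagram, so that the enumeration of cases is complete.
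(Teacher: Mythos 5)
Your proposal is correct and follows exactly the route the paper intends: the corollary is stated as an immediate consequence of Theorem~\ref{Thrm conditions for trees iff CM}, and your degree-counting argument showing that a path with more than two vertices is a suspension of a tree only when it is the $4$-vertex path $x_1y_1y_2x_2$ (the suspension of a single edge), together with the translation of the weight inequalities into $b\leq a$ and $b\leq c$, is precisely the specialization the paper leaves to the reader. No gaps.
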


The following examples are useful for the proof of 
Proposition~\ref{Prop tw7-c not cm}.

\begin{Example}\label{ex120512a}
Let $P^4_\omega$ be a trivially weighted 4-path where each edge has
weight $a$.
$$\xymatrix{v_1\ar@{-}[r]^a
&v_2\ar@{-}[r]^a&v_3\ar@{-}[r]^a&v_4\ar@{-}[r]^a&v_5}$$
We show that $R/I(P^4_\omega)$ has
dimension 3, depth 2, and type 1.

As in Example~\ref{ex120508a}, we decompose:
\begin{align*}
I(P^4_\omega)
&=(X_1^aX_2^a,X_2^aX_3^a,X_3^aX_4^a,X_4^aX_5^a)R \\
&=
(X_1^a,X_3^a,X_4^a)R\bigcap
(X_1^a,X_3^a,X_5^a)R\bigcap
(X_2^a,X_3^a,X_5^a)R\bigcap
(X_2^a,X_4^a)R.
\end{align*}
It follows that $\dim(R/I(P^4_\omega))=3$

Using the above decomposition,
we conclude that the associated prime ideals of $I(P^4_\omega)$
are 
$(X_1,X_3,X_4)R$,
$(X_1,X_3,X_5)R$,
$(X_2,X_3,X_5)R$, and
$(X_2,X_4)R$.
In particular, the element $X_4-X_5$ is $R/I(P^4_\omega)$-regular.
We simplify the quotient
\begin{align*}
R/(I(P^4_\omega)+(X_4-X_5)R)
&\cong R'/(X_1^aX_2^a,X_2^aX_3^a,X_3^aX_4^a,X_4^{2a})R'
\end{align*}
where $R'=A[X_1,X_2,X_3,X_4]$.
As before, we decompose:
\begin{align*}
(X_1^aX_2^a,X_2^aX_3^a,X_3^aX_4^a,X_4^{2a})R'
&=
(X_1^a,X_3^a,X_4^{2a})R'\bigcap
(X_2^a,X_3^a,X_4^{2a})R'\bigcap
(X_2^a,X_4^a)R'.
\end{align*}
The associated primes of this ideal are
$(X_1,X_3,X_4)R'$,
$(X_2,X_3,X_4)R'$, and
$(X_2,X_4)R'$.
It follows that the element $X_1-X_2$ is 
$R/(I(P^4_\omega)+(X_4-X_5)R)$-regular, so we have
$\depth(R/I(P^4_\omega))\geq 2$, as claimed.
We simplify the quotient
\begin{align}
\label{eq120514b}
R/(I(P^4_\omega)+(X_4-X_5,X_1-X_2)R)
&\cong R''/(X_2^{2a},X_2^aX_3^a,X_3^aX_4^a,X_4^{2a})R''
\end{align}
where $R''=A[X_2,X_3,X_4]$ and decompose:
\begin{align}
(X_2^{2a},X_2^aX_3^a,X_3^aX_4^a,X_4^{2a})R''
&=
\label{eq120514a}
(X_2^{2a},X_3^a,X_4^{2a})R''\bigcap
(X_2^a,X_4^a)R''.
\end{align}
Since the maximal ideal $(X_2,X_3,X_4)R''$ is associated to 
$(X_2^{2a},X_2^aX_3^a,X_3^aX_4^a,X_4^{2a})R''$, 
this shows that $\depth(R/I(P^4_\omega))= 2$.
Furthermore, this explains the non-vanishing in the next computation:
\begin{align*}
0
&\neq\operatorname{Ext}_R^2(R/(X_1,\ldots,X_5)R,R/I(P^4_\omega))\\
&\cong\operatorname{Hom}_{R''}(R''/(X_2,X_3,X_4)R'',R''/(X_2^{2a},X_2^aX_3^a,X_3^aX_4^a,X_4^{2a})R'')\\
&\cong((X_2^{2a},X_2^aX_3^a,X_3^aX_4^a,X_4^{2a})R'':_{R''}(X_2,X_3,X_4)R'')\\
&=((X_2^{2a},X_3^a,X_4^{2a})R''\bigcap
(X_2^a,X_4^a)R'':_{R''}(X_2,X_3,X_4)R'')\\
&\subseteq\left((X_2^{2a},X_3^a,X_4^{2a})R'':_{R''}(X_2,X_3,X_4)R''\right)\\
&=(X_2^{2a-1}X_3^{a-1}X_4^{2a-1})R''.
\end{align*}
The first isomorphism is standard from the fact that 
$X_4-X_5,X_1-X_2$ is $R$-regular and 
$R/I(P^4_\omega)$-regular with the isomorphism~\eqref{eq120514b}.
The second isomorphism and the containment are routine.
The first equality comes from the decomposition~\eqref{eq120514a},
and
the second equality is from the fact that $A$ is a field.
It follows that
$\operatorname{Ext}_R^2(R/(X_1,\ldots,X_5)R,R/I(P^4_\omega))$
is cyclic, so $R/I(P^4_\omega)$ has type 1, as claimed.
\end{Example}

\begin{Example}\label{ex120513a}
Let $P^5_\omega$ be a trivially weighted 5-path where each edge has
weight $a$.
$$\xymatrix{v_1\ar@{-}[r]^a
&v_2\ar@{-}[r]^a&v_3\ar@{-}[r]^a&v_4\ar@{-}[r]^a&v_5\ar@{-}[r]^a&v_6}$$
As in Example~\ref{ex120512a}, the quotient $R/I(P^4_\omega)$ has
dimension 3, depth 2, and type 1.
\end{Example}

Now we turn our attention to Cohen-Macaulayness of weighted cycles.

\begin{Proposition} \label{Prop 3-cycles cm}
Every weighted 3-cycle $C^3_{\omega}$ is Cohen-Macaulay.
\end{Proposition}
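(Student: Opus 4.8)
The plan is to exploit the single most important structural fact here: a $3$-cycle is the same graph as the complete graph $K^3$. Indeed, $C^3$ has vertex set $\{v_1,v_2,v_3\}$ and edge set $\{v_1v_2,v_2v_3,v_3v_1\}$, which is precisely the set of all three two-element subsets of $\{v_1,v_2,v_3\}$, so $C^3 = K^3$. This identification is an isomorphism of weighted graphs $C^3_{\omega} = K^3_{\omega}$ for any weight function $\omega$, and therefore the statement is an immediate special case of Proposition~\ref{Prop complete graphs CM}, which already asserts that every weighted complete graph is Cohen-Macaulay. So my first and preferred route is simply to make this identification explicit and cite Proposition~\ref{Prop complete graphs CM}.

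If a self-contained argument is wanted that does not route through the complete-graph result, I would reprise the same two-step reasoning directly for the triangle. First, establish unmixedness: Example~\ref{ex120508b} already produces an m-irreducible decomposition of $I(C^3_{\omega})$ whose every component $P(V',\delta')$ satisfies $|V'|=2$, so all minimal weighted vertex covers of $C^3_{\omega}$ have cardinality $2$; hence $C^3_{\omega}$ is unmixed, and since $A$ is a field, $I(C^3_{\omega})$ is m-unmixed by Corollary~\ref{cor120508a} (alternatively, unmixedness is Proposition~\ref{Prop complete graphs unmixed} applied to $K^3$). Second, compute the dimension: working in $R=A[X_1,X_2,X_3]$, the minimal primes of $I(C^3_{\omega})$ are the $P(V')$ for minimal vertex covers $V'$ of $C^3$, each of height $|V'|=2$ by Proposition~\ref{Prop primes and vertex covers}, so $\dim(R/I(C^3_{\omega}))=3-2=1$. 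Finally, invoke the standard fact that a quotient ring that is unmixed of dimension one is Cohen-Macaulay: unmixedness rules out an embedded (hence dimension-zero) associated prime, forcing $\depth\geq 1=\dim$.

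I do not expect any genuine obstacle; the only thing that really must be checked is the identification $C^3=K^3$, after which everything is either an instance of Proposition~\ref{Prop complete graphs CM} or a rehearsal of its proof using the explicit decomposition from Example~\ref{ex120508b}. The one point worth stating carefully in the self-contained version is the implication ``unmixed plus dimension one implies Cohen-Macaulay,'' which is exactly the step already used in Proposition~\ref{Prop complete graphs CM}, so no new input beyond Example~\ref{ex120508b} and Proposition~\ref{Prop primes and vertex covers} is required.
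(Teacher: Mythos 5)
Your proposal is correct, and your preferred route is genuinely different from the paper's. The paper proves this proposition directly: it cites the explicit decomposition of $I(C^3_{\omega})$ from Example~\ref{ex120508b} (every component has the form $P(V',\delta')$ with $|V'|=2$), concludes m-unmixedness, and then uses ``unmixed in dimension $1$ implies Cohen--Macaulay.'' Your observation that $C^3=K^3$ as (weighted) graphs is accurate, and since Proposition~\ref{Prop complete graphs CM} appears earlier in the same section and its proof (via Proposition~\ref{Prop complete graphs unmixed}) does not depend on any cycle results, citing it involves no circularity; this reduction is shorter and makes the triangle case a literal special case of an already-proved statement. What the paper's route buys instead is an explicit irredundant decomposition of $I(C^3_{\omega})$, which is reused elsewhere (e.g., it illustrates the phenomenon of two components supported on the same pair of variables when $a<b<c$), whereas the complete-graph argument only produces unmixedness abstractly via the ``largest weight can be dropped'' argument. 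Your fallback, self-contained version is essentially the paper's proof, with the dimension count via Proposition~\ref{Prop primes and vertex covers} spelled out more explicitly than the paper bothers to do; all steps there are sound.
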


\begin{proof}
From the decomposition  of $I(C^3_{\omega})$ in Example~\ref{ex120508b}, we see 
that $I(C^3_{\omega}))$ is m-unmixed; since 
$R/I(C^3_{\omega})$ has dimension 1,  it is Cohen-Macaulay. 
\end{proof}

\begin{Proposition}\label{Prop tw4-c not cm}
No weighted 4-cycle is Cohen-Macaulay.
\end{Proposition}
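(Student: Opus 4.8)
The plan is to split into two cases according to whether $C^4_{\omega}$ is trivially weighted. If $C^4_{\omega}$ is nontrivially weighted, then Proposition~\ref{Prop weighted 4-cycles mixed} shows that it is mixed, so $I(C^4_{\omega})$ is m-mixed by Corollary~\ref{cor120508a}. Since a Cohen-Macaulay quotient is unmixed (this is standard, as invoked for $\eqref{thm120607a}\implies\eqref{thm120607b}$ in Theorem~\ref{thm120607}), it follows that $R/I(C^4_{\omega})$ cannot be Cohen-Macaulay in this case. The genuine obstacle is therefore the trivially weighted case: here Proposition~\ref{Prop trivially weighted 4-cycle} tells us that $C^4_{\omega}$ is unmixed, so the mixedness shortcut is unavailable and I must instead show directly that the depth is strictly smaller than the dimension.

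So assume $C^4_{\omega}$ is trivially weighted, say every edge has weight $a$. The minimal vertex covers of $C^4$ are $\{v_1,v_3\}$ and $\{v_2,v_4\}$, so (as in the proof of Proposition~\ref{Prop trivial G_w unmixed iff G unmixed}) Theorem~\ref{Cor I(G)= int.P} gives the decomposition
\[
I(C^4_{\omega}) = I_1 \cap I_2, \qquad I_1 = (X_1^a, X_3^a)R, \quad I_2 = (X_2^a, X_4^a)R.
\]
By Proposition~\ref{Prop primes and vertex covers} the associated primes are $(X_1,X_3)R$ and $(X_2,X_4)R$, so $R/I(C^4_{\omega})$ has dimension $4-2 = 2$. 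I would next observe that each $R/I_i$ is Cohen-Macaulay of dimension $2$: for instance $R/I_1 \cong \big(A[X_1,X_3]/(X_1^a,X_3^a)\big)[X_2,X_4]$ is free over the polynomial ring $A[X_2,X_4]$, whence $\depth(R/I_i) = \dim(R/I_i) = 2$. Moreover $I_1 + I_2 = (X_1^a, X_2^a, X_3^a, X_4^a)R$, so $R/(I_1+I_2)$ is Artinian and in particular $H^0_{\m}(R/(I_1+I_2)) = R/(I_1+I_2) \neq 0$.

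The key step is the Mayer--Vietoris short exact sequence
\[
0 \to R/I(C^4_{\omega}) \to R/I_1 \oplus R/I_2 \to R/(I_1+I_2) \to 0
\]
together with its long exact sequence in local cohomology $H^i_{\m}(-)$. Because $R/I_1$ and $R/I_2$ are Cohen-Macaulay of dimension $2$, we have $H^0_{\m}(R/I_i) = H^1_{\m}(R/I_i) = 0$, so the long exact sequence collapses to an isomorphism $H^1_{\m}(R/I(C^4_{\omega})) \cong H^0_{\m}(R/(I_1+I_2)) \neq 0$. Hence $\depth(R/I(C^4_{\omega})) \leq 1 < 2 = \dim(R/I(C^4_{\omega}))$, so $R/I(C^4_{\omega})$ is not Cohen-Macaulay, completing the trivially weighted case. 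Note that this computation is insensitive to the field $A$, so $C^4_{\omega}$ fails to be Cohen-Macaulay over every field.

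The main obstacle is exactly this depth computation for the trivially weighted 4-cycle, since it is precisely the configuration for which the unmixedness criterion gives no information. A conceptual alternative to the local cohomology bookkeeping is to note that the support $V(I(C^4_{\omega})) = V(X_1,X_3) \cup V(X_2,X_4)$ is a union of two planes meeting only at the origin, which is not connected in codimension one; Hartshorne's connectedness theorem then forbids Cohen-Macaulayness. Either route establishes the claim.
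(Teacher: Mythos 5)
Your proof is correct, but it takes a genuinely different route from the paper's in the trivially weighted case. The paper works directly with the generators $(X_1^aX_2^a,X_2^aX_3^a,X_3^aX_4^a,X_4^aX_1^a)$: it exhibits $X_1-X_2$ as a regular element, passes to the quotient $A[X_1,X_3,X_4]/(X_1^{2a},X_1^aX_3^a,X_3^aX_4^a,X_4^aX_1^a)$, and shows the maximal ideal is associated there, concluding $\depth=1<2=\dim$. You instead exploit the m-irreducible decomposition $I(C^4_{\omega})=(X_1^a,X_3^a)\cap(X_2^a,X_4^a)$ and run the Mayer--Vietoris sequence through local cohomology, using that the two components are Cohen--Macaulay of dimension $2$ while their sum is $\m$-primary; this is closer in spirit to the paper's own treatment of the $5$- and $7$-cycles (Theorem~\ref{Prop w5-c cm iff um} and Proposition~\ref{Prop tw7-c not cm}), where short exact sequences and the Depth Lemma/Ext are used. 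Your route has the advantage of making the failure conceptually transparent --- two codimension-$2$ components meeting only at the origin, so the Hartshorne connectedness remark is an immediate corollary --- and of being visibly independent of $a$ and of the field; the paper's route is more elementary (no local cohomology) and reuses the regular-element bookkeeping it sets up in Example~\ref{ex120512a}. All the individual steps you use check out: the two minimal vertex covers of $C^4$ are indeed $\{v_1,v_3\}$ and $\{v_2,v_4\}$, the stated intersection equals $I(C^4_{\omega})$, each $R/I_i$ is free over $A[X_2,X_4]$ (resp.\ $A[X_1,X_3]$), and the vanishing $H^0_{\m}(R/I_i)=H^1_{\m}(R/I_i)=0$ gives the claimed isomorphism onto the nonzero Artinian module.
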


\begin{proof}
Let $C^4_\omega$ be a weighted 4-cycle.
If $C^4_\omega$ is non-trivially weighted, then it is mixed
by Proposition~\ref{Prop weighted 4-cycles mixed}, hence it is not Cohen-Macaulay.
Thus, we assume that $C^4_\omega$ is trivially weighted.
Write the underlying unweighted graph of $C^4_{\omega}$ as $C^4=v_1v_2v_3v_4v_1$, and let the weight of each edge of $C^4_{\omega}$ be $a$. Then $I(C^4_{\omega})=(X_1^aX_2^a,X_2^aX_3^a,X_3^aX_4^a,X_4^aX_1^a)$. 
Decomposing $I(C^4_{\omega})$ and computing associated primes as in
Example~\ref{ex120512a},
we see  that 
$X_1-X_2$ is a regular element for $R/I(C^4_{\omega})$ such that
$$R'=R/(I(C^4_{\omega})+(X_1-X_2)R)\cong
A[X_1,X_3,X_4]/(X_1^{2a},X_1^aX_3^a,X_3^aX_4^a,X_4^aX_1^a).$$ 
Also, as in Example~\ref{ex120512a}, the maximal ideal of $R'$ is associated to $R'$. 
It follows that $R/I(C^4_{\omega})$ has depth 1 and dimension 2,
so $C^4_{\omega}$ is not Cohen-Macaulay.
\end{proof}

\begin{Theorem}\label{Prop w5-c cm iff um}
A weighted 5-cycle $C^5_{\omega}$ is Cohen-Macaulay if and only if it is unmixed.
\end{Theorem}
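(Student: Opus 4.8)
The plan is to prove the two implications separately, treating the forward direction as routine and concentrating on the converse. For the forward implication: over the field $A$, if $R/I(C^5_\omega)$ is Cohen-Macaulay then it is unmixed (equidimensional with no embedded associated primes), so $I(C^5_\omega)$ is m-unmixed by the Note following Definition~\ref{Def graphs m-unmixed}, and then Corollary~\ref{cor120508a} gives that $C^5_\omega$ is unmixed. No computation is needed here.

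For the converse I would first apply Theorem~\ref{Prop weighted 5-cycles mixedness} to reduce to the explicit zigzag weighting, so that
$$I:=I(C^5_\omega)=(X_1^aX_2^a,\,X_2^bX_3^b,\,X_3^cX_4^c,\,X_4^dX_5^d,\,X_5^aX_1^a)R$$
with $e=a\le b\ge c\le d\ge a$. Since every minimal vertex cover of $C^5$ has cardinality $3$ and $A$ is a field, $\Ht(I)=3$ and $\dim(R/I)=2$; moreover unmixedness forces all associated primes to have height $3$, so $R/I$ has no embedded primes and positive depth. Thus the whole problem reduces to producing a regular sequence of length two on $R/I$, after which $\depth(R/I)=2=\dim(R/I)$ yields the Cohen-Macaulay property.

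The core is then a depth computation in the spirit of Example~\ref{ex120512a}. By Proposition~\ref{Prop primes and vertex covers} the associated primes of $I$ are exactly the five minimal-vertex-cover primes of $C^5$, and a short check shows that every pair of variables lies in one of them; consequently no difference $X_i-X_j$ is $R/I$-regular, and one must use a linear form whose support meets the two-element complement of each minimal cover — for instance $\theta_1=X_1+\cdots+X_5$, which avoids all five primes over an arbitrary field and so is $R/I$-regular. To keep the intermediate ideals monomial rather than wrestling with $R/(I,\theta_1)R$ directly, I would instead polarize $I$ as in the proof of Lemma~\ref{Thrm conditions for R/I CM}: the standard de-polarization differences then form a regular sequence on the polarized quotient, and it remains to cut that quotient down to an Artinian monomial ring by a further regular sequence. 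Artinian rings are Cohen-Macaulay, so tracing the regular sequences back gives $\depth(R/I)=2=\dim(R/I)$.

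The hard part will be exhibiting this Artinian reduction. Unlike the suspension setting of Lemma~\ref{Thrm conditions for R/I CM}, the pentagon has no whiskers to ``fold,'' so there is no pure power available in every variable, and one must use the full strength of the inequalities $a\le b\ge c\le d\ge a$ to select a collapsing sequence that lands in an Artinian quotient and to verify regularity at each stage by re-decomposing and reading off associated primes. I expect this to be precisely the step where the weight condition is consumed, mirroring how the analogous collapse instead forces depth strictly below the dimension for the weighted $4$-cycle in Proposition~\ref{Prop tw4-c not cm}.
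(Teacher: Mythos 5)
Your forward implication and the reduction to the zigzag weighting via Theorem~\ref{Prop weighted 5-cycles mixedness} match the paper, and your observation that $\theta_1=X_1+\cdots+X_5$ avoids all five height-three associated primes (while no difference $X_i-X_j$ does) is correct. But the argument stops exactly where the real work begins: you never produce a second regular element on $R/(I,\theta_1)$, nor the ``Artinian reduction'' of the polarized ideal, and you say so yourself (``the hard part will be exhibiting this Artinian reduction''). What you have is a strategy outline with the decisive depth computation missing. Moreover, the strategy you favor is doubtful: the device in Lemma~\ref{Thrm conditions for R/I CM} works because the polarization of the whisker generator $Y_i^{a_i}Z_i^{a_i}$ can be \emph{re-grouped} as the polarization of the pure power $Z_i^{2a_i}$, which is what makes the target quotient $T/J$ Artinian. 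A $5$-cycle has no whiskers; every generator is a genuine product of two distinct vertex blocks, and collapsing adjacent blocks consistently around an odd cycle cannot turn every generator into a pure power. So there is no evident analogue of the Artinian target, and nothing in your sketch shows where the inequalities $a\le b\ge c\le d\ge a$ would actually enter.

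The paper's proof takes a different and complete route: it writes $I(C^5_\omega)=J\cap K$ with $J=(X_1^aX_2^a,X_3^c,X_4^dX_5^d,X_5^eX_1^e)$ and $K=(X_1^aX_2^a,X_2^bX_3^b,X_4^c,X_5^eX_1^e)$, splitting off a pure power of $X_3$, respectively $X_4$, so that $R/J$ and $R/K$ are (up to adjoining a free variable) quotients by weighted \emph{path} ideals, already known to be Cohen--Macaulay of dimension $2$ by Corollary~\ref{Cor path results}; that is where the weight inequalities are consumed. Feeding the short exact sequence $0\to R/(J\cap K)\to R/J\oplus R/K\to R/(J+K)\to 0$ into the Depth Lemma, with $R/(J+K)$ of depth $1$ and dimension $2$, gives $\depth(R/I)\ge 2=\dim(R/I)$. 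To repair your proposal you would need either to exhibit an explicit length-two regular sequence on $R/I$ (painful, since $R/(I,\theta_1)$ is no longer monomial) or to adopt a splitting of this kind.
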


\begin{proof}
One implication is standard. 
For the converse, assume that $C^5_{\omega}$ is unmixed.
Theorem~\ref{Prop weighted 5-cycles mixedness} 
implies that $C^5_{\omega}$
is isomorphic to the weighted 5-cycle
$$\xymatrix{
v_1 \ar@{-}[d]_e \ar@{-}[r]^a &v_2\ar@{-}[r]^b &v_3 \ar@{-}[ld]^c\\
v_5\ar@{-}[r]_d & v_4
}$$
such that $e=a\leq b\geq c\leq d\geq e$.
Partially decomposing the edge ideal of $C^5_{\omega}$  we obtain:
\begin{align*}
I( C^5_{\omega}) &= (X_1^aX_2^a,X_2^bX_3^b,X_3^cX_4^c,X_4^dX_5^d,X_5^eX_1^e)
=J\bigcap K
\end{align*}
where $J=(X_1^aX_2^a,X_3^c,X_4^dX_5^d,X_5^eX_1^e)$
and $K=(X_1^aX_2^a,X_2^bX_3^b,X_4^c,X_5^eX_1^e)$.
It is straightforward to show that these ideals fit into  an exact sequence of the following form: 
$$0 \to \frac{R}{I(C^5_{\omega})}\to
\frac{R}{J} \oplus \frac{R}{K}\to
\frac{R}{(X_3^c,X_4^c,X_1^aX_2^a,X_5^eX_1^e)}\to 0. $$
The quotient $R/(X_3^c,X_4^c,X_1^aX_2^a,X_5^eX_1^e)$ 
has depth 1 and dimension 2,
because it can be obtained from the ring $A[X_1,X_2,X_5]/(X_1^aX_2^a,X_5^eX_1^e)$
which has depth 1 and dimension 2. 

Furthermore, Corollary~\ref{Cor path results} implies that 
$A[X_1,X_2,X_4,X_5]/(X_1^aX_2^a,X_4^dX_5^d,X_5^eX_1^e)$
and $A[X_1,X_2,X_3,X_5]/(X_1^aX_2^a,X_2^bX_3^b,X_5^eX_1^e)$
are Cohen-Macaulay of dimension 2.
Hence, $R/J$ and  $R/K$ are Cohen-Macaulay of depth 2. 
Thus by the Depth Lemma, $R/I(C^5_{\omega})$ has depth at least 2. 
Since it has dimension 2, it is Cohen-Macaulay. 
\end{proof}

\begin{Proposition}\label{Prop tw7-c not cm}
No weighted 7-cycle is  Cohen-Macaulay.
\end{Proposition}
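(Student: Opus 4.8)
The plan is to follow the template of Proposition~\ref{Prop tw4-c not cm}, separating the non-trivially and trivially weighted cases. The non-trivially weighted case is immediate: Proposition~\ref{Prop weighted 7-cycle mixed} shows that such a $C^7_{\omega}$ is mixed, so $I(C^7_{\omega})$ is m-mixed by Corollary~\ref{cor120508a}; since Cohen--Macaulay quotients are unmixed, $C^7_{\omega}$ is not Cohen--Macaulay. All the work is therefore in the trivially weighted case, say with every edge of weight $a$, where the underlying $C^7$ is unmixed by Fact~\ref{Thrm cycles mixedness} and this obstruction is unavailable.

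In that case $\dim R/I(C^7_{\omega})=3$, since the minimal weighted vertex covers of $C^7$ have cardinality $4$ and $d=7$, so it suffices to prove $\depth R/I(C^7_{\omega})\le 2$. I note first that the device of Proposition~\ref{Prop tw4-c not cm} --- killing a regular difference $X_i-X_j$ --- is no longer available: by Proposition~\ref{Prop primes and vertex covers} the associated primes of $I(C^7_{\omega})$ are the $P(V')$ with $V'$ a minimal vertex cover of $C^7$, and a short count (each vertex lies in exactly three of the seven maximal independent sets of $C^7$, so any two vertices lie in a common minimal cover) shows that every $X_i-X_j$ lies in some associated prime. Instead I would use a Mayer--Vietoris decomposition as in the proof of Theorem~\ref{Prop w5-c cm iff um}: splitting the generator $X_3^aX_4^a$ writes $I(C^7_{\omega})=J\cap K$ with $J=(X_3^a,\dots)$ and $K=(X_4^a,\dots)$, yielding
$$0 \to \frac{R}{I(C^7_{\omega})} \to \frac{R}{J} \oplus \frac{R}{K} \to \frac{R}{J+K} \to 0.$$
Deleting the vertex $v_3$ (resp.\ $v_4$) turns $J$ (resp.\ $K$) into a pure power times the edge ideal of a $4$-path\ldots correction: a $5$-path, and turns $J+K$ into two pure powers times the edge ideal of a $4$-path. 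Hence, by Examples~\ref{ex120513a} and~\ref{ex120512a} together with the additivity of depth over tensor products over the field $A$, each of $R/J$, $R/K$, and $R/(J+K)$ has depth $2$ and dimension $3$.

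The Depth Lemma applied to this sequence gives only $\depth R/I(C^7_{\omega})\ge 2$, and the main obstacle is the reverse inequality $\depth R/I(C^7_{\omega})\le 2$, equivalently $H^2_{\m}(R/I(C^7_{\omega}))\neq 0$. This is precisely where the situation differs from Theorem~\ref{Prop w5-c cm iff um}, in which the Depth Lemma sufficed to force Cohen--Macaulayness; here it yields only the lower bound. From the long exact local cohomology sequence (using $H^1_{\m}(R/(J+K))=0$) one gets $H^2_{\m}(R/I(C^7_{\omega}))=\ker\bigl(H^2_{\m}(R/J)\oplus H^2_{\m}(R/K)\to H^2_{\m}(R/(J+K))\bigr)$, so the task reduces to showing that this Mayer--Vietoris map fails to be injective. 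I expect this to be the hard step, and I would resolve it by a K\"unneth analysis of each $H^2_{\m}$ that pins the question to the explicit socle class of the $4$-path quotient exhibited in Example~\ref{ex120512a} (the nonvanishing of $\operatorname{Ext}^2$ computed there); surviving in the kernel, this class witnesses $\depth R/I(C^7_{\omega})=2<3=\dim R/I(C^7_{\omega})$ and so shows $C^7_{\omega}$ is not Cohen--Macaulay.
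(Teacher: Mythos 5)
Your overall architecture is the same as the paper's: dispose of the non-trivially weighted case via Proposition~\ref{Prop weighted 7-cycle mixed}, then in the trivially weighted case split one edge generator to get $I(C^7_{\omega})=J\cap K$ with $J$, $K$, and $J+K$ reducing (after an Artinian tensor factor) to the $5$-path, $5$-path, and $4$-path quotients of Examples~\ref{ex120513a} and~\ref{ex120512a}, apply the Depth Lemma for $\depth\geq 2$, and then try to force $\depth\leq 2$ from the long exact sequence. (The paper splits $X_1^aX_2^a$ rather than $X_3^aX_4^a$; this is just a rotation. Your side observation that no difference $X_i-X_j$ is regular, because any two vertices of $C^7$ lie in a common minimal vertex cover, is correct but not needed.)

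The genuine gap is the last step. You correctly identify that everything hinges on the non-injectivity of the map $H^2_{\mathfrak m}(R/J)\oplus H^2_{\mathfrak m}(R/K)\to H^2_{\mathfrak m}(R/(J+K))$, but you do not prove it: you defer to an unspecified ``K\"unneth analysis'' and assert without argument that a socle class ``survives in the kernel.'' Tracking an explicit socle element through the connecting maps is delicate and is not what the paper does. The missing ingredient is the \emph{type} computation: Example~\ref{ex120512a} shows not merely that $\operatorname{Ext}^2_R(R/\mathfrak m,-)$ is nonzero for the $4$-path quotient but that it is \emph{cyclic} (type $1$), and likewise Example~\ref{ex120513a} for the $5$-path; tensoring with the Artinian pure-power factors preserves this. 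Hence $\operatorname{Ext}^2_R(R/\mathfrak m,R/J)\cong\operatorname{Ext}^2_R(R/\mathfrak m,R/K)\cong\operatorname{Ext}^2_R(R/\mathfrak m,R/(J+K))\cong k$, and the relevant portion of the long exact sequence reads
$$0\to \operatorname{Ext}^2_R(R/\mathfrak m,R/I(C^7_{\omega}))\to k^2\to k,$$
so the kernel is nonzero purely for dimension reasons --- no identification of where any particular class goes is required. To complete your proof you must either import the type-$1$ statements from Examples~\ref{ex120512a} and~\ref{ex120513a} (you cite only the nonvanishing of $\operatorname{Ext}^2$, not its cyclicity) and run this count, or else actually carry out the socle computation you sketch; as written, the conclusion $\depth R/I(C^7_{\omega})=2$ is not established.
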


\begin{proof}
Let $C^7_\omega$ be a weighted 7-cycle.
If $C^7_\omega$ is non-trivially weighted, then it is mixed
by Proposition~\ref{Prop weighted 7-cycle mixed}, hence it is not Cohen-Macaulay.
Thus, we assume that $C^7_\omega$ is trivially weighted.
Write the underlying unweighted graph of $C^7_{\omega}$ as 
$C^7=v_1v_2v_3v_4v_5v_6v_7v_1$, and let the weight of each edge of 
$C^7_{\omega}$ be $a$. 
Partially decomposing the edge ideal of $C^7_{\omega}$  we obtain:
\begin{align*}
I(C^7_{\omega})&=(X_1^aX_2^a,X_2^aX_3^a,X_3^aX_4^a,X_4^aX_5^a,X_5^aX_6^a,X_6^aX_7^a,X_7^aX_1^a)
=J\bigcap K
\intertext{where} 
J&=(X_1^a,X_2^aX_3^a,X_3^aX_4^a,X_4^aX_5^a,X_5^aX_6^a,X_6^aX_7^a)\\
K&=(X_2^a,X_3^aX_4^a,X_4^aX_5^a,X_5^aX_6^a,X_6^aX_7^a,X_7^aX_1^a).
\end{align*}
It is routine to show that these ideals fit into  an exact sequence of the following form: 
\begin{equation}\label{eq120514c}
0\to\frac{R}{I(C^7_{\omega})}\to \frac{R}{J} \oplus \frac{R}{K}\to
\frac{R}{L}\to 0
\end{equation}
where
$$L=(X_1^a,X_2^a,X_3^aX_4^a,X_4^aX_5^a,X_5^aX_6^a,X_6^aX_7^a)R.$$
Example~\ref{ex120512a} implies that
the ring
$A[X_3,X_4,X_5,X_6,X_7]/(X_3^aX_4^a,X_4^aX_5^a,X_5^aX_6^a,X_6^aX_7^a)$
has depth 2 and type 1,
and it follows that 
$R/L$
also has depth 2 and type 1.
Similarly, Example~\ref{ex120513a} implies that
$R/J$ and $R/K$ both have depth 2 and type 1.
The Depth Lemma applied to the sequence~\eqref{eq120514c}
implies that
$\depth(R/I(C^7_{\omega}))\geq 2$.
Furthermore, for the ideal $\mathfrak m=(X_1,\ldots,X_7)R$,
part of the long exact sequence in
$\operatorname{Ext}_R^2(R/\mathfrak m,-)$
associated to the sequence~\eqref{eq120514c}
has the form 
$$0\to 
\operatorname{Ext}_R^2(R/\mathfrak m,R/I(C^7_\omega))\to
\operatorname{Ext}_R^2(R/\mathfrak m,R/J)
\oplus 
\operatorname{Ext}_R^2(R/\mathfrak m,R/K)
\to \operatorname{Ext}_R^2(R/\mathfrak m,R/L).$$
Using the type computations we have already made, this sequence has
the form
$$0\to 
\operatorname{Ext}_R^2(R/\mathfrak m,R/I(C^7_\omega))\to
k^2\to k.$$
It follows that $\operatorname{Ext}_R^2(R/\mathfrak m,R/I(C^7_\omega))\neq 0$, 
so  $\depth(R/I(C^7_\omega))=2<3=\dim(R/I(C^7_\omega))$.
It follows that $C^7_\omega$ is not Cohen-Macaulay, as claimed.
\end{proof}

\begin{proof}[Proof of Theorem~\ref{intthm120507a}]
\eqref{intthm120507a1}
Assume that $C^n_{\omega}$ is Cohen-Macaulay.
Then it is  unmixed, so Proposition~\ref{Prop mixed ei then mixed wei}
implies that the unweighted cycle $C^n$ is unmixed.
From Fact~\ref{Thrm cycles mixedness}, we conclude that
$n\in\{3,4,5,7\}$. Propositions~\ref{Prop tw4-c not cm}
and~\ref{Prop tw7-c not cm} imply that $n\neq 4,7$, so we have
$n\in\{3,5\}$.

\eqref{intthm120507a2}
This is Proposition~\ref{Prop 3-cycles cm}.

\eqref{intthm120507a3}
Theorems~\ref{Prop weighted 5-cycles mixedness} and~\ref{Prop w5-c cm iff um}.
\end{proof}

\bibliographystyle{amsplain}
\providecommand{\bysame}{\leavevmode\hbox to3em{\hrulefill}\thinspace}
\providecommand{\MR}{\relax\ifhmode\unskip\space\fi MR }
\providecommand{\MRhref}[2]{%
  \href{http://www.ams.org/mathscinet-getitem?mr=#1}{#2}
}
\providecommand{\href}[2]{#2}

\end{document}